\documentclass[a4paper,12pt,reqno,english]{smfart}

\usepackage{aeguill} 
\usepackage{enumerate}
\usepackage{amssymb,amsmath,latexsym,amsthm}
\usepackage[T1]{fontenc}
\usepackage[a4paper, tmargin=1.0in, bmargin=1.0in, lmargin=1.0in,rmargin=1.0in]{geometry}
\usepackage{url}
\usepackage[main=english]{babel}
\usepackage[utf8]{inputenc}
\usepackage{mathrsfs}
\usepackage{xcolor}
\usepackage{bigints}
\usepackage{comment}
\definecolor{violet}{rgb}{0.0,0.2,0.7}
\definecolor{rouge2}{rgb}{0.8,0.0,0.2}
\usepackage{tikz}
\usepackage{empheq}
\usepackage{tikz-cd}
\usetikzlibrary{matrix,arrows,decorations.pathmorphing}
\usepackage{hyperref}
\usepackage{mathpazo} 
\hypersetup{
	bookmarks=true,         
	unicode=false,          
	pdftoolbar=true,        
	pdfmenubar=true,        
	pdffitwindow=false,     
	pdfstartview={FitH},    
	pdftitle={},    
	pdfauthor={},     
	colorlinks=true,       
	linkcolor=rouge2,          
	citecolor=violet,        
	filecolor=black,      
	urlcolor=cyan}           
\usepackage{enumitem}
\usepackage{appendix}

\usepackage{mathtools}
\usepackage{dsfont}

\theoremstyle{plain}
    \newtheorem{thm}{Theorem}[section]
	
	\newtheorem{lem}[thm]{Lemma}
	\newtheorem{prop}[thm]{Proposition}
	
	\newtheorem{cor}[thm]{Corollary}
	
\theoremstyle{plain}
	\newtheorem{bigthm}{Theorem}
	
	\newtheorem{bigcor}[bigthm]{Corollary}
	
    \newtheorem*{bigrmk*}{Remark}

\theoremstyle{definition}
	\newtheorem{defn}[thm]{Definition}
    \newtheorem*{defn*}{Definition}

    \newtheorem*{conj*}{Conjecture}
	\newtheorem*{claim*}{Claim}
    \newtheorem{claim}[thm]{Claim}
	\newtheorem*{ack*}{Acknowledgements}
    \newtheorem*{AID*}{Declaration on the use of AI}
\theoremstyle{remark}
	\newtheorem{rmk}[thm]{Remark}
	\newtheorem*{rmk*}{Remark}

	\newtheorem*{ques*}{Question}
	\newtheorem*{ans*}{Answer}

\numberwithin{equation}{section}
\renewcommand{\labelenumi}{(\roman{enumi})}
\newlist{steps}{enumerate}{1}
\setlist[steps, 1]{label = Step \arabic*:}

\mathcode`l="8000
\begingroup
\makeatletter
\lccode`\~=`\l
\DeclareMathSymbol{\lsb@l}{\mathalpha}{letters}{`l}
\lowercase{\gdef~{\ifnum\the\mathgroup=\m@ne \ell \else \lsb@l \fi}}%
\endgroup

\DeclareFontFamily{U}{MnSymbolC}{}
\DeclareSymbolFont{MnSyC}{U}{MnSymbolC}{m}{n}
\DeclareFontShape{U}{MnSymbolC}{m}{n}{
	<-6>  MnSymbolC5
	<6-7>  MnSymbolC6
	<7-8>  MnSymbolC7
	<8-9>  MnSymbolC8
	<9-10> MnSymbolC9
	<10-12> MnSymbolC10
	<12->   MnSymbolC12}{}
\DeclareMathSymbol{\intprod}{\mathbin}{MnSyC}{'270}
\DeclareMathOperator{\Id}{Id}

\DeclareMathOperator{\im}{im}
\DeclareMathOperator{\tr}{tr}

\DeclareMathOperator{\Vol}{Vol}
\DeclareMathOperator{\Ric}{Ric}

\DeclareMathOperator{\ext}{ext}

\DeclareMathOperator{\Aut}{Aut}

\DeclareMathOperator{\Bisec}{Bisec}
\DeclareMathOperator{\Scal}{Scal}
\DeclareMathOperator{\Lie}{Lie}
\DeclareMathOperator{\red}{red}

\DeclareMathOperator{\Fut}{Fut}
\DeclareMathOperator{\DF}{\mathbf{DF}}

\def\1{\mathds{1}}

\def\E{\mathbf{E}}

\def\J{\mathbf{J}}
\def\L{\mathbf{L}}
\def\M{\mathbf{M}}

\newcommand\rel{{\mathrm{rel}}}
\newcommand\Pol{{\mathrm{P}}}

\newcommand{\ii}{\mathrm{i}}

\newcommand{\homg}{{\widehat{\omega}}}

\newcommand\sm{\sigma}

\newcommand\vph{\varphi}

\newcommand\om{\omega}
\newcommand\ta{\theta}
\newcommand\gm{\gamma}

\newcommand\af{\alpha}
\newcommand\bt{\beta}

\newcommand\Dt{\Delta}
\newcommand\Om{\Omega}

\newcommand\Gm{\Gamma}

\newcommand\RGL{\mathrm{GL}}

\newcommand\BN{\mathbb{N}}

\newcommand\BR{\mathbb{R}}
\newcommand\BC{\mathbb{C}}

\newcommand\BS{\mathbb{S}}

\newcommand\BP{\mathbb{P}}
\newcommand\BT{\mathbb{T}}

\newcommand\T{\mathbb{T}}

\newcommand\CA{\mathcal{A}}

\newcommand\CC{\mathcal{C}}
\newcommand\CD{\mathcal{D}}

\newcommand\CE{\mathcal{E}}

\newcommand\CH{\mathcal{H}}
\newcommand\CI{\mathcal{I}}

\newcommand\CL{\mathcal{L}}

\newcommand\CS{\mathcal{S}}

\newcommand\CX{\mathcal{X}}

\newcommand\lt{\left}
\newcommand\rt{\right}

\newcommand\pl{\partial}
\newcommand\db{\bar{\partial}}

\newcommand\ddb{\partial \bar{\partial}}

\newcommand\dd{\mathrm{d}}
\newcommand\dc{\mathrm{d}^\mathrm{c}}
\newcommand\ddc{\mathrm{d}\mathrm{d}^{\mathrm{c}}}

\newcommand\grad[1]{\nabla #1}

\newcommand\w{\wedge}

\newcommand{\RN}[1]{\textup{\uppercase\expandafter{\romannumeral#1}}}

\makeatletter
\newsavebox{\@brx}
\newcommand{\llangle}[1][]{\savebox{\@brx}{\(\m@th{#1\langle}\)}%
  \mathopen{\copy\@brx\kern-0.5\wd\@brx\usebox{\@brx}}}
\newcommand{\rrangle}[1][]{\savebox{\@brx}{\(\m@th{#1\rangle}\)}%
  \mathclose{\copy\@brx\kern-0.5\wd\@brx\usebox{\@brx}}}
\makeatother

\title[Extremal Sasaki manifolds and weighted K-stability]{Extremal Sasaki manifolds and weighted K-stability}

\author{Simon Jubert}
\address[Simon Jubert]{Dipartimento di Matematica Guido Castelnuovo, Sapienza Universit\`a di Roma; Piazzale Aldo Moro 5, 00185 Roma, Italy \qquad\qquad\qquad\qquad\qquad\qquad\qquad\qquad\qquad}
\email{\href{mailto:simon.jubert@uniroma1.it}{simon.jubert@uniroma1.it} \qquad\qquad\qquad\qquad\qquad\qquad\qquad}
\urladdr{\href{https://sites.google.com/view/simon-jubert/accueil}{https://sites.google.com/view/simon-jubert/accueil}} 

\author{Chung-Ming Pan}
\address[Chung-Ming Pan]{Department of Mathematics, National Tsing Hua University; No.~101, Section~2, Guangfu Road, Hsinchu 300044, Taiwan \qquad\qquad\qquad\qquad\qquad\qquad\qquad\qquad}
\email{\href{mailto:chungmingpan@math.nthu.edu.tw}{chungmingpan@math.nthu.edu.tw} \qquad\qquad\qquad\qquad\qquad\qquad\qquad\qquad\qquad\qquad}
\urladdr{\href{https://chungmingpan.github.io/}{https://chungmingpan.github.io/}}

\date{\today}
\subjclass{53C25, 32Q26, 32Q15}
\keywords{Weighted extremal K\"ahler metrics, extremal Sasaki manifolds, weighted K-stability}

\begin{document} 

\maketitle

\begin{abstract}
We remove the log-concavity assumption on the weight in the analytic weighted Yau--Tian--Donaldson correspondence established in \cite{DiNezza_Jubert_Lahdili_2024, Han_Liu_2024, DiNezza_Jubert_Lahdili_2024_b} and discuss applications to Sasaki and conformally K\"ahler Einstein--Maxwell geometries.
\end{abstract}

\tableofcontents

\section*{Introduction}
Finding canonical K\"ahler metrics in a given K\"ahler class is a central problem in K\"ahler geometry. 
Calabi proposed extremal K\"ahler metrics as natural candidates \cite{Calabi_1982}. 
A K\"ahler metric is extremal if its scalar curvature generates a holomorphic Hamiltonian vector field. 
This notion naturally generalizes both constant scalar curvature K\"ahler (cscK) metrics and K\"ahler--Einstein metrics.

\smallskip
The weighted extremal metric was introduced by Lahdili \cite{Lahdili_2019} to provide a unified framework for several canonical metric problems in K\"ahler (and related) geometry. 
To recall the definition, let $X$ be a compact K\"ahler manifold endowed with a compact torus action $\T \subset \Aut_{\red}(X)$, and let $\af$ be a K\"ahler class. 
After a compatible normalization of the moment maps, all $\BT$-invariant K\"ahler metrics in $\af$ have the same moment polytope, denoted by $\Pol_\alpha$. 
Given a smooth positive weight function $v : \Pol_\af \to \BR_{>0}$, one can define the $v$-weighted scalar curvature $\Scal_v(\om)$ of a $\T$-invariant K\"ahler metric $\om \in \af$. 
For a second positive smooth weight $w : \Pol_\af \to \BR$, the metric $\om$ is called \emph{$(v,w)$-extremal} if 
\begin{equation}{\label{vw:cscK:i}}
    \Scal_v(\om) = (\ell_{v,w}^{\ext} \cdot w)(m_\om),   
\end{equation}
where $\ell_{v,w}^{\ext}$ is the corresponding $(v,w)$-weighted extremal affine-linear function. 

\smallskip
Under the assumption that $v$ is $\log$-concave, the existence of $(v,w)$-extremal metrics has been related to coercivity \cite{DiNezza_Jubert_Lahdili_2024, Han_Liu_2024, DiNezza_Jubert_Lahdili_2024_b} and stability \cite{Boucksom_Jonsson_2025, Han_Liu_2024, Trusiani_2026}. 
Without $\log$-concavity, however, the arguments establishing certain crucial a priori estimates for weighted extremal K\"ahler metrics in \cite{DiNezza_Jubert_Lahdili_2024, Han_Liu_2024} no longer apply.

\smallskip
However, several geometric problems of particular interest lead to \((v,w)\)-weighted extremal equations for weights \(v\) that are not $\log$-concave. A prominent example is the extremal Sasaki problem for a class of Sasaki structures introduced by Apostolov--Calderbank \cite{Apostolov_Calderbank_2021}, which we discuss below in the applications.

\subsection*{Main results}
Our main technical result removes the $\log$-concavity assumption in the integral Laplacian estimates for weighted cscK potentials, which were established by Di~Nezza, the first-named author, and Lahdili \cite{DiNezza_Jubert_Lahdili_2024, DiNezza_Jubert_Lahdili_2024_b}, and independently by Han--Liu \cite{Han_Liu_2024}, under the $\log$-concavity assumption. 

\smallskip
Fix a reference metric $\omega \in \alpha$ and two weight functions $v>0$ and $w$ on $\Pol_\alpha$.

\begin{bigthm}[= Theorem~\ref{thm:main1}]{\label{t:esti}}
Let $\homg \in \alpha$ be a $\T$-invariant $(v,w)$-weighted extremal metric, i.e. solving \eqref{vw:cscK:i}. 
For all $p \geq 1$, there is a constant $C_p > 0$ such that 
\[
    \|\tr_\homg \om\|_{L^{2p+2}(\om^n)} \leq C_p
    \quad\text{and}\quad
    \|\tr_\om \homg\|_{L^{\frac{2p+2}{n-1}}(\om^n)} \leq C_p, 
\]
where $C_p > 0$ is a constant independent of $\homg$. 
\end{bigthm}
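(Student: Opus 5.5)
Write $\homg = \om + \ddc\vph$ and $F := \log(\homg^n/\om^n)$. As in \cite{Chen_Cheng_2021} and its weighted versions \cite{DiNezza_Jubert_Lahdili_2024, Han_Liu_2024}, I would view the weighted extremal equation as a coupled system:
\begin{equation*}
    F = \log\frac{\homg^n}{\om^n}, \qquad
    \Delta_{\homg}\big(F + \log v(m_\homg)\big) = -\big(\ell^{\ext}_{v,w}\cdot w\big)(m_\homg)/v(m_\homg) + \tr_\homg \Ric(\om) + \text{(terms from } \Delta_\homg v(m_\homg)\text{)}.
\end{equation*}
This comes from the formula $\Scal_v(\homg) = v(m_\homg)\Scal(\homg) - 2\Delta_\homg(v(m_\homg)) + \tr_\homg\big(v_{,ij}(m_\homg)\, g(\xi_i,\xi_j)\big)$. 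The key observation is that everything involving $v$ is built from $m_\homg$ and from $g_\homg(\xi_i,\xi_j)$, where the $\xi_i$ are the fixed generators of $\T$. Since $m_\homg$ takes values in the fixed compact polytope $\Pol_\af$, the quantities $v(m_\homg)$, $\log v(m_\homg)$ and their derivatives in the polytope variables are uniformly bounded, whatever the sign of $(\log v)''$. Log-concavity was only used to drop a term of the form $-\langle \nabla^2 \log v\,(d m,dm)\rangle$ with a favourable sign. My plan is to control that term instead.

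\textbf{Step 1.} First get $C^0$ bounds on $\vph$ and $F$, and a bound on $|\nabla\vph|$. As in the log-concave case, the entropy bound is supplied by the setting: the constant $C_p$ depends on the entropy, as in \cite{DiNezza_Jubert_Lahdili_2024}. I then use the $\T$-invariance and the identity $d m_\homg^\xi = -\iota_\xi\homg$. This gives the pointwise bound
\[
|d m_\homg|^2_\homg = \sum_i g_\homg(\xi_i,\xi_i) \le \tr_\om\homg \cdot \max_i |\xi_i|^2_\om .
\]
So $\nabla^2(\log v)(dm,dm)$ is bounded by $C\,\tr_\om\homg$, and this error is linear in $\tr_\om \homg$.

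\textbf{Step 2.} Next I run the integral estimate of Chen--Cheng for the quantity $u = e^{-\lambda(F+\log v(m_\homg)) + \mu\vph}\,\tr_\om\homg$. I compute $\Delta_\homg u$, multiply by $u^{2p}$ and integrate by parts against $\homg^n$. In the non-log-concave case the extra term is bounded by
\[
C\,\lambda\, e^{-\lambda(\dots)}(\tr_\om\homg)^2 .
\]
The Aubin--Yau inequality $\Delta_\homg \log\tr_\om\homg \ge -C\tr_\homg\om$, together with the good term $\mu\tr_\homg\om\cdot\tr_\om\homg \ge \mu\, n\,(\tr_\om\homg)^{1+1/(n-1)} e^{-F/(n-1)}$, produces a superlinear positive term. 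This term absorbs $C(\tr_\om\homg)^2$ only if the exponent $1+1/(n-1) \ge 2$, which fails for $n\ge3$. \emph{This is the main obstacle.} To handle it, I would change the weight inside the exponential and absorb the bad term in the divergence structure rather than pointwise. Concretely, the term $\Delta_\homg(v(m_\homg))$ is a divergence, so I would integrate it by parts against $u^{2p}$. This moves one derivative onto $u^{2p}$, and the result is $\int u^{2p-1}|\nabla u|\,|dm|$. By Cauchy--Schwarz this is at most $\epsilon\int u^{2p-1}... |\nabla u|^2 + C_\epsilon\int u^{2p}|dm|^2$. The first part is absorbed by the gradient term $p\int u^{2p-1}|\nabla u|^2_\homg$ of the Chen--Cheng inequality. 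The second part is at most $C\int u^{2p}\tr_\om\homg$, which is a term of lower order than $\int u^{2p+1}\tr_\homg\om$-type good terms once the $e^{F}$ factors are accounted for.

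\textbf{Step 3.} From the resulting inequality I extract $\int e^{(\dots)F}(\tr_\om\homg)^{2p+1}\tr_\homg\om\,\om^n \le C_p$. I then use $\tr_\homg\om \ge (\tr_\om\homg)^{1/(n-1)}e^{-F/(n-1)}$ together with the elementary inequality $\tr_\homg\om \le (\tr_\om\homg)^{n-1}e^{-F}$. Hölder's inequality and the entropy/$L^q$ bounds on $e^{F}$ then give the two stated bounds, $\|\tr_\homg\om\|_{L^{2p+2}}\le C_p$ and $\|\tr_\om\homg\|_{L^{(2p+2)/(n-1)}}\le C_p$, exactly as in \cite{DiNezza_Jubert_Lahdili_2024}.
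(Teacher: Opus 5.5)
Your Step 2 has a genuine gap: the integration by parts does not remove the obstacle you correctly identify.

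\textbf{The bad term is misplaced.} With $F+\log v(m_\homg)$ in the exponent, $\Dt_\homg$ of the exponent is given by the equation, up to a drift term $\langle \dd\log v(m_\homg),\dd(\cdot)\rangle_\homg$. The Hessian of $\log v$ actually enters through the Aubin--Yau term $-\tr_\om\Ric(\homg)/\tr_\om\homg$, that is, through $\Dt_\om\log v(m_\homg)$. Its Hessian part $\sum_{i,j}(D^2_{ij}\log v)(m_\homg)\langle \dd m_\homg^{\xi_i},\dd m_\homg^{\xi_j}\rangle_\om$ is measured in $\om$ and has size $(\tr_\om\homg)^2$. Your Step 1 bound controls $|\dd m_\homg|^2_\homg$, not this quantity. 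The first-order part $\sum_i(D_i\log v)(m_\homg)\Dt_\om m_\homg^{\xi_i}$ contains third derivatives of $\vph$ and also needs an argument, typically a cancellation with the drift of the weighted Laplacian; you do not address it.

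\textbf{Integration by parts gains nothing, even in your own framing.} A bad contribution of the form $u\cdot\Dt_\homg\log v(m_\homg)$ in $\Dt_\homg u$, tested against $u^{2p}$, gives $\int u^{2p+1}\Dt_\homg\log v(m_\homg)$, not $\int u^{2p}(\cdots)$; you dropped a factor of $u$. Integrate by parts and apply Cauchy--Schwarz against the good term $\int u^{2p-1}|\dd u|^2_\homg$. What remains is $C_p\int u^{2p+1}|\dd\log v(m_\homg)|^2_\homg\lesssim\int u^{2p+1}\tr_\om\homg$. This is exactly the non-absorbable quantity $\int e^{(\cdots)}u^{2p}(\tr_\om\homg)^2$ you started from: one term quadratic in $\dd m_\homg$ has been traded for another.

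If instead you integrate the genuine $\Dt_\om$-term by parts against $e^F\om^n$, then $|\langle \dd u,\dd m^{\xi}_\homg\rangle_\om|\le|\dd u|_\homg(\tr_\om\homg)^{3/2}|\xi|_\om$. This leaves the same remainder $\int u^{2p+1}\tr_\om\homg$ up to constants. It also produces terms $\langle\dd F,\dd\log v(m_\homg)\rangle_\om$, and you have no control on $\dd F$.

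\textbf{What is missing} is a mechanism that makes the Hessian-of-$\log v$ term bounded pointwise, before any integration.

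The paper gets this by working with $\tr_\homg\om$ and the Chern--Lu inequality. There the Hessian term becomes $\langle\dd m^{\xi_i}_\homg\w\dc m^{\xi_j}_\homg,\om\rangle_\homg=g_\om(\xi_i,\xi_j)$, which is bounded by the fixed background metric. Also $\langle\ddc m^{\xi_i}_\homg,\om\rangle_\homg=\CL_{J\xi_i}\tr_\homg\om-\tr_\homg(\CL_{J\xi_i}\om)$, and its derivative part cancels exactly against the drift $\langle\dd\log v(m_\homg),\dd\tr_\homg\om\rangle_\homg$ of $\Dt_{\homg,v}$ (Claim~\ref{claim:dmdm_ddcm}). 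The remaining term $\Dt_\Om F/\tr_\homg\om$ is then handled by the integral argument with $u=e^{-(aF+b\vph)}\tr_\homg\om$, $H=e^Gu^{2p}$ and $a=(p^2+1)/(2p)$.

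Alternatively, as the paper remarks, one can stay on the Aubin--Yau route and add $\sum_i|m^{\xi_i}_\homg|^2$ to the auxiliary function. Its $\homg$-Laplacian produces the good term $2\sum_i g_\homg(\xi_i,\xi_i)$. In eigen-coordinates, $\sum_k\lambda_k^2|\xi^k|^2\le(\sum_k\lambda_k)\sum_k\lambda_k|\xi^k|^2$, so this good term dominates $|\dd m^{\xi}_\homg|^2_\om/\tr_\om\homg$ pointwise.

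Your proposal contains neither ingredient, so for $n\geq 3$ the estimate does not close.
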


Let us stress that our result applies more generally to weighted $(v,w)$-cscK metrics, and not only to the $(v,w)$-extremal case (see Section~\ref{s:csck} for more details).

\smallskip
In previous works \cite{DiNezza_Jubert_Lahdili_2024, Han_Liu_2024, DiNezza_Jubert_Lahdili_2024_b}, the Aubin--Yau inequality is used to obtain integral estimates for the Laplacian of solutions to the weighted cscK equation (or $W^{2,p}$-estimate).
The key point in our approach is that we use the Chern--Lu inequality instead of the standard Aubin--Yau estimate. 
In the unweighted setting, the difference between the two approaches is relatively minor and essentially only in which side one controls the bisectional curvature of the reference metric (see e.g. \cite{Chen_Cheng_2021_1, Pan_To_Trusiani_2023}). 
However, in the weighted case, the two approaches differ in one extra way. 
We shall use $\homg$ to denote a $(v,w)$-extremal metric in $[\om]$ in the following discussion.

\smallskip
Indeed, when one uses the standard Aubin--Yau inequality without modifying the auxiliary function, the term involving the Hessian of $\log v$ appears to have coefficients with linear growth in $\tr_\om \homg$; more precisely, one encounters the following 
\[
    -\frac{1}{\tr_\om \homg} \sum_{i,j} (D_{ij}^2 \log v)(m_\homg) \langle \dd m_\homg^{\xi_i}, \dd m_\homg^{\xi_j} \rangle_\om, 
\]
where $(\xi_i)_i$ is a basis of the Lie algebra $\mathfrak{t} := \Lie(\BT)$. 
Without making additional adjustments to the auxiliary function, this term is difficult to control by the remaining terms in the estimate, and for this reason, the $\log$-concavity assumption on $v$ was imposed in \cite{DiNezza_Jubert_Lahdili_2024, Han_Liu_2024, DiNezza_Jubert_Lahdili_2024_b}, allowing this contribution to be dropped off by its sign. 

\smallskip
By contrast, the corresponding terms in the Chern–Lu approach with the standard auxiliary function have the form
\[
    \frac{1}{\tr_\homg \om} \left(\sum_{i,j} (D_{ij}^2 \log v)(m_\homg) \langle \dd m_\homg^{\xi_i} \w \dc m_\homg^{\xi_j}, \om\rangle_\homg 
    + \sum_i (D_i \log v)(m_\homg) \langle \ddc m_\homg^{\xi_i}, \om\rangle_\homg\right). 
\]
The key observations are that the first pairing $\langle \dd m_\homg^{\xi_i} \w \dc m_\homg^{\xi_j}, \om\rangle_\homg$ is controlled by the fixed background metric (see Claim~\ref{claim:dmdm_ddcm}), and the second part $\langle \ddc m_\homg^{\xi_i}, \om\rangle_\homg$ can be compensated for by other terms arising in the weighted Laplacian of $\tr_\homg \om$. 
This allows us to handle these terms without imposing a sign condition on the Hessian of $\log v$. 
The trade-off is that the subsequent argument establishing the integral estimates for $\tr_\homg \om$ and $\tr_\om \homg$ is more delicate than in the standard Aubin--Yau approach. 

\smallskip
The Chern--Lu inequality also yields a direct Laplacian estimate for weighted K\"ahler--Ricci solitons on Fano manifolds, without first establishing an integral Laplacian estimate. 
Again, our argument does not require $\log$-concavity of $v$, an assumption needed in the usual Aubin--Yau argument used to prove \cite[Proposition~3.8]{Han_Li_2023} (but not included in its statement). 
We give a simple proof of this estimate in Section~\ref{sec:wKRS}.

\begin{rmk*}
Towards the end of this project, we learned from an LLM that the Aubin--Yau method may also yield the desired estimate by adding $\sum_i |m_\homg^{\xi_i}|^2$ to the auxiliary functions. 
Nevertheless, we have decided to present the proof using the Chern--Lu inequality, which we initially developed without AI assistance for weighted K\"ahler--Ricci solitons, requires no extra (mysterious) adjustment to the auxiliary functions, and is also better suited to degenerate situations. 
\end{rmk*}

\subsubsection*{Equivalence between existence and coercivity}
We establish the following equivalence between the existence of weighted extremal K\"ahler metrics and coercivity of the relative weighted Mabuchi functional, removing the assumption of log concavity in \cite{DiNezza_Jubert_Lahdili_2024, Han_Liu_2024, DiNezza_Jubert_Lahdili_2024_b}. 

\begin{bigthm}{\label{t:A}}
Let $(v,w) \in \CC_{>0}^\infty(\Pol_\alpha)^2$ be two weight functions. Then the following are equivalent:
\begin{itemize}
    \item There exists a $(v,w)$-extremal K\"ahler metric in $\alpha$;
    \item The relative weighted Mabuchi functional $\M^{\rel}_{v,w}$ is relatively $\BT^\BC$-coercive, i.e. there exist positive constants $\sigma$ and $C$ such that
    \[
        \M_{v,w}^{\rel}(\homg) \geq \sigma \inf_{\gamma \in \BT^\BC} \J(\gamma^\ast \homg) - C
    \]
for every $\BT$-invariant K\"ahler metric $\homg \in \alpha$.
\end{itemize}
\end{bigthm}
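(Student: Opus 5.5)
The plan follows the scheme of \cite{DiNezza_Jubert_Lahdili_2024, Han_Liu_2024, DiNezza_Jubert_Lahdili_2024_b}, which in turn adapts Chen--Cheng's strategy and the Darvas--Rubinstein principle. The only place where log-concavity of $v$ entered there is the integral Laplacian estimate, which we replace by Theorem~\ref{t:esti}. We treat the two implications separately.

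\emph{Coercivity $\Rightarrow$ existence.} We use a continuity path in the space of $\BT$-invariant potentials, namely the weighted twisted equation
\[
    t\bigl(\Scal_v(\homg_t) - (\ell^{\ext}_{v,w}\cdot w)(m_{\homg_t})\bigr) = (1-t)\bigl(\tr_{\homg_t}\om - n\bigr)\, v(m_{\homg_t}),
\]
suitably weighted, for $t\in[0,1]$. At $t=0$ it is solved by $\om$. Openness comes from the invertibility of the weighted Lichnerowicz-type linearization modulo the $\BT^\BC$-action, exactly as in the cited works; this step does not use log-concavity.

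For closedness we proceed in three steps:
\begin{enumerate}
\item Relative coercivity of $\M^{\rel}_{v,w}$ and convexity of the twisted functional $\M^{\rel}_{v,w}+\frac{1-t}{t}\J$ along weak geodesics give a uniform bound on the entropy $\int \log(\homg_t^n/\om^n)\,\homg_t^n$ and on $\J(\gamma_t^*\homg_t)$, after normalizing by some $\gamma_t\in\BT^\BC$.
\item With the entropy bound in hand, we run the Chen--Cheng type argument in the weighted setting: the $C^0$ estimate and the gradient estimate, as in \cite{DiNezza_Jubert_Lahdili_2024}, which again do not use log-concavity.
\item We then apply the integral Laplacian estimate of Theorem~\ref{t:esti}, extended to the twisted equation. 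By the remark after Theorem~\ref{t:esti} it applies to general weighted cscK-type equations, and the twist term $(1-t)\tr_{\homg}\om$ has a good sign in the Chern--Lu computation. From the integral bound, the standard bootstrap yields a $C^{2,\alpha}$ bound and then higher regularity.
\end{enumerate}
Compactness then gives a solution at $t=1$.

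\emph{Existence $\Rightarrow$ coercivity.} We follow the Darvas--Rubinstein framework. $\M^{\rel}_{v,w}$ extends to the finite-energy space $\CE^1$, is convex along $\CE^1$-geodesics, and is lower semicontinuous. We need two inputs:
\begin{itemize}
\item Minimizers of $\M^{\rel}_{v,w}$ in $\CE^1$ are smooth $(v,w)$-extremal metrics. This regularity statement is proved via the same continuity path combined with the a priori estimates of Theorem~\ref{t:esti}, which is where log-concavity was used before.
\item $(v,w)$-extremal metrics are unique modulo $\BT^\BC$, by the Berman--Berndtsson-type argument of Lahdili.
\end{itemize}
Together these yield coercivity modulo $\BT^\BC$ by the abstract principle.

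The main obstacle is ensuring that Theorem~\ref{t:esti} holds uniformly along the twisted continuity path and for the regularization of minimizers, not just for genuine $(v,w)$-extremal metrics. Concretely, we must check that the extra twist terms in the Chern--Lu computation are compensated in the same way as the $\langle\ddc m^{\xi_i}_{\homg},\om\rangle_{\homg}$ terms, with constants depending only on the entropy bound. We would first try to absorb the twist term directly into the good term $\tr_\homg\om\cdot\tr_{\om}\homg$ arising from the Chern--Lu inequality.
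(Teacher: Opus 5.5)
\textbf{There is a genuine gap in step (iii).} Your architecture for coercivity $\Rightarrow$ existence is the paper's: run the continuity method of \cite{DiNezza_Jubert_Lahdili_2024_b} and replace only the integral Laplacian estimates. But step (iii) is exactly where the new content lies, and it is not supplied. Theorem~\ref{t:esti} is not enough, and the remark you invoke concerns untwisted $(v,w)$-cscK equations only.

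Coercivity holds only modulo $\T^\BC$, so step (i) controls $\gamma_t^*\homg_t$, not $\homg_t$. With respect to the fixed $\om$, this metric solves \eqref{eq:twcscK} with twisting form a multiple of $\gamma_t^*\om=\om+\ddc\psi_t$. That is, the twist is $\beta+\ddc f$ with $f$ proportional to $\psi_t$, and $f$ is not uniformly bounded as $\gamma_t$ diverges; only $\vph$ and $F+f$ are controlled. Your ``good sign'' observation is correct for the un-normalized path with twist $\frac{1-t}{t}\om$, but there you have no uniform $C^0$ bounds as $t\to 1$.

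The Chern--Lu argument therefore has to be redone, in ways your sketch does not anticipate:
\begin{itemize}
\item The lower bound $\tr_\homg\om\ge c_0$ requires control of $F$ itself, so it is lost. Yet it was used in \eqref{eq:ess2.5}, \eqref{eq:mix7} and \eqref{eq:mix10}.
\item One must take $G=-(a(F+f)+b\vph)$ and write $\Ric(\homg)=\Ric(\om)-\ddc(F+f)+\ddc f+\ddc\log v(m_\homg)$.
\item The extra term is bounded as $\langle\ddc f,\om\rangle_\homg/\tr_\homg\om\ge -C_\beta\tr_\homg\om$, using $\ddc f\ge -C_\beta\om$. It is absorbed into the $b\,\tr_\homg\om$ term produced by $-b\,\Dt_{\homg,v}\vph$. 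Chern--Lu itself provides no favorable term such as $\tr_\homg\om\cdot\tr_\om\homg$ to absorb into.
\item The surviving $1/\tr_\homg\om$ contributions give an extra $\int_X u^{2p}e^G v(m_\homg)\homg^n$, which must be handled by H\"older.
\item The outcome is only $\int_X e^{(k-1)f}(\tr_\om\homg)^k\om^n\le C_k$. An $L^q$ bound on $\tr_\om\homg$ additionally needs $\int_X e^{-p_0f}\om^n\le M$ with $q<p_0$.
\end{itemize}
This is Theorem~\ref{thm:main2}; without it, closedness is not established.

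\textbf{Two further points.} First, in \cite{DiNezza_Jubert_Lahdili_2024_b} the integral gradient estimate for $F+f$ is deduced from the integral Laplacian estimate. So your step (ii) is not log-concavity-free on its own and must come after (iii).

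Second, for existence $\Rightarrow$ coercivity the paper does nothing new: it invokes \cite{Apostolov_Jubert_Lahdili_2023}, where this implication holds with no log-concavity assumption. Log-concavity only ever entered the converse. Your detour through regularity of $\CE^1$ minimizers via a continuity path and a priori estimates is therefore unnecessary. As written, it would also rest on twisted estimates of the type \eqref{eq:twcscK}, i.e.\ on the same missing ingredient.
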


The implication from existence to coercivity was established in
\cite{Apostolov_Jubert_Lahdili_2023}. 
As aforementioned, the converse was proved under the additional assumption that $v$ is log-concave in \cite{Han_Liu_2024, DiNezza_Jubert_Lahdili_2024_b}. 
In this argument, $\log$-concavity is used only to establish integral Laplacian estimates for solutions of the weighted extremal equation and for the twisted equations arising along the continuity path after acting by $\T^{\BC}$. 

\smallskip
Theorem~\ref{t:esti} provides these estimates for the weighted extremal equation without any log-concavity assumption on $v$. 
The corresponding estimates for the twisted equations are proved in Theorem \ref{thm:main2}. 
These results allow the same continuity argument to hold for arbitrary positive weights $v$. 

\subsubsection*{Weighted Yau--Tian--Donaldson correspondence}
Suppose in addition that $(X,\af)$ is polarized, i.e. $\af = 2\pi c_1(L)$ for an ample line bundle $L$ over $X$. 
Very recently, the Yau--Tian--Donaldson (YTD) correspondence has been established, characterizing the existence of a cscK metric in $\af$ in terms of (various) notions of algebro-geometric stability 
\cite{Chi_Li_2022_2, Boucksom_Jonsson_2025, Darvas_Zhang_25, Trusiani_2026}. 
A weighted generalization of this correspondence was proved for $\log$-concave weights $v>0$ in \cite{Boucksom_Jonsson_2025, Trusiani_2026} (see also \cite{Han_Liu_2025}). 
These results rely on the analytic characterizations established in \cite{Apostolov_Jubert_Lahdili_2023, DiNezza_Jubert_Lahdili_2024, Han_Liu_2024, DiNezza_Jubert_Lahdili_2024_b}. 
Since $\log$-concavity is required only in the analytic counterpart, Theorem~\ref{t:A} removes this restriction and yields the weighted YTD correspondence for arbitrary positive weights $v$ on the moment polytope.

\begin{bigcor}{\label{c:B}}
Suppose that $\af = 2 \pi c_1(L)$ for an ample line bundle $L$ over $X$. 
Then the following statements are equivalent.
\begin{itemize}
    \item There exists a $(v,w)$-extremal K\"ahler metric in $\af$.
    \item $(X,\af)$ is $\T$-relatively uniformly $(v,w)$-weighted K-stable; namely, there exists a constant $\lambda > 0$ such that 
    \[
        \DF^{\rel}_{v,w}(\CX,\CA)
        \geq
        \lambda \, \J^{\mathrm{NA}}_{\T^{\BC}}(\CX,\CA),
    \]
    for every $\T$-equivariant dominating smooth test configuration $(\CX,\CA)$.
\end{itemize}
\end{bigcor}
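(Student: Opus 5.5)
The plan is to deduce Corollary~\ref{c:B} from Theorem~\ref{t:A} and the existing non-Archimedean machinery. We follow the strategy of \cite{Boucksom_Jonsson_2025, Trusiani_2026}, whose proofs in the weighted setting use $\log$-concavity of $v$ only through the analytic equivalence between existence and coercivity. The two implications are treated separately.

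\textbf{Existence $\Rightarrow$ uniform stability.} By \cite{Apostolov_Jubert_Lahdili_2023}, which needs no $\log$-concavity, a $(v,w)$-extremal metric gives relative $\T^\BC$-coercivity of $\M^{\rel}_{v,w}$. We then take a $\T$-equivariant dominating smooth test configuration $(\CX,\CA)$ and its associated subgeodesic ray $\vph_t$. Along this ray we compute slopes at infinity, using the slope formulas for the weighted energy functionals:
\begin{itemize}
\item the slope of $\M^{\rel}_{v,w}$ is bounded above by $\DF^{\rel}_{v,w}(\CX,\CA)$, with equality up to nonnegative correction terms, via the weighted Lidskii/Chow-type asymptotics in \cite{Boucksom_Jonsson_2025, Han_Liu_2025};
\item the slope of $\inf_{\gamma}\J(\gamma^\ast\cdot)$ equals $\J^{\mathrm{NA}}_{\T^\BC}(\CX,\CA)$.
\end{itemize}
Dividing the coercivity inequality by $t$ and letting $t\to\infty$ gives the desired inequality with $\lambda=\sigma$. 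None of these slope computations depends on the sign of $D^2\log v$.

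\textbf{Uniform stability $\Rightarrow$ existence.} By Theorem~\ref{t:A}, it suffices to prove relative coercivity. We argue by contradiction, as in \cite{Chi_Li_2022_2, Boucksom_Jonsson_2025, Darvas_Zhang_25}. If coercivity fails, we can produce a unit-speed $\T$-invariant finite-energy geodesic ray $\vph_t$ that is normalized so that it is orthogonal to the $\T^\BC$-directions, with $\J$-slope $1$, along which the slope of $\M^{\rel}_{v,w}$ is at most $\sigma$ for arbitrarily small $\sigma$. This construction uses convexity of the weighted Mabuchi functional along weak geodesics, which is available for all $v>0$ from \cite{Lahdili_2019}, together with compactness in $\CE^1$.

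The main step is to approximate this ray by test configurations. We use Berman--Boucksom--Jonsson-type approximation through multiplier ideals, or equivalently through the filtration/valuative approach of Li and Boucksom--Jonsson, in their $\T$-equivariant weighted versions from \cite{Boucksom_Jonsson_2025, Trusiani_2026}. The output is a sequence of dominating smooth test configurations whose $\DF^{\rel}_{v,w}$ converges to at most the slope of $\M^{\rel}_{v,w}$ along the ray, and whose $\J^{\mathrm{NA}}_{\T^\BC}$ stays bounded below. This contradicts uniform stability once $\sigma<\lambda$.

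The main obstacle is to check that this entropy/approximation step really avoids $\log$-concavity. The weighted entropy term has weight $v(m_\vph)$ and the pluripotential energy has weight $w$. Their non-Archimedean limits are controlled by continuity of $v$ and $w$ on $\Pol_\af$ and by the Duistermaat--Heckman measures, not by concavity. I expect this part to be a careful citation argument. The delicate point is that the cited approximation results are stated under $\log$-concavity, so one must verify that the hypothesis enters their proofs only through the analytic coercivity theorem, which Theorem~\ref{t:A} now provides for arbitrary positive $v$.
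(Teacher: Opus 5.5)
Your proposal is correct and follows the paper's route. The paper deduces Corollary~\ref{c:B} by combining Theorem~\ref{t:A} (existence $\Leftrightarrow$ relative $\T^{\BC}$-coercivity, now for every $v>0$, with the forward implication from \cite{Apostolov_Jubert_Lahdili_2023}) with the weighted non-Archimedean results of \cite{Boucksom_Jonsson_2025, Trusiani_2026}, in which $\log$-concavity enters only through that analytic step; your sketches of the slope computations and of the destabilizing-ray approximation unpack those citations, and the caveat you raise at the end is exactly what the paper asserts.
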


In the above statement, $\DF^{\rel}_{v,w}$ denotes the $\T$-relative $(v,w)$-weighted Donaldson--Futaki invariant and $\J^{\mathrm{NA}}_{\T^{\BC}}$ is the reduced non-Archimedean $\J$-functional. 

\subsubsection*{Applications to extremal Sasaki manifolds}
The existence problem for extremal Sasaki metrics is the odd-dimensional counterpart of Calabi's problem \cite{Boyer_Galicki_Simanca_2008}. 
Consider a $(2n+1)$-dimensional strictly pseudoconvex CR manifold $(N,\CD,J_N)$ of Sasaki type. 
A Sasaki structure $\xi$ on $(N,J_N,\CD)$ determines a \textit{transverse K\"ahler structure} $\om_\xi$, whose scalar curvature $\Scal(\om_\xi)$ is the \emph{Tanaka--Webster scalar curvature}. 
The Sasaki manifold $(N,J_N,\CD,\xi)$ is said to be \emph{extremal} if $\Scal(\om_\xi)$ generates a \emph{transverse Killing vector field} \cite{Boyer_Galicki_2008, Boyer_Galicki_Simanca_2008}.  
The existence of extremal Sasaki metrics has been studied extensively; see \cite{Martelli_Sparks_Yau_2006, Martelli_Sparks_Yau_2008, Legendre_2011, vanCoevering_2013, Boyer_vanCoevering_2018, Collins_Szekelyhidi_2018, Apostolov_Calderbank_2021, Apostolov_Calderbank_Legendre_2021} and the references therein.

\smallskip
When the Sasaki--Reeb vector field $\xi^\CD$ associated with the Sasaki structure $\xi$ generates a circle action $\BS^1_\xi$ on $N$, the Sasaki quotient $M_\xi := N/\BS^1_\xi$ is a K\"ahler orbifold of real dimension $2n$. 
If this action is free on $N$, the Sasaki structure $\xi$ is called \emph{regular}. 
In this case, $M_\xi$ is smooth and inherits a complex structure $J_\xi$. 
We denote by $X_\xi:=(M_\xi,J_\xi)$ the resulting complex manifold, and by abuse of notation, denote its induced K\"ahler form by $\om_\xi$.  
A fundamental feature of this correspondence is that the Sasaki manifold $(N,\CD,J_N,\xi^\CD)$ is extremal if and only if $(X_\xi,\om_\xi)$ is extremal in Calabi's sense. 
By contrast, when $\xi^\CD$ does not generate an $\BS^1$-action, $\xi$ is called \textit{irregular}. 
The Reeb foliation then has a non-Hausdorff leaf space, making the existence problem for extremal Sasaki structures much more delicate. 

\smallskip
In \cite{Apostolov_Calderbank_2021}, Apostolov--Calderbank introduced a construction that allows irregular extremal Sasaki structures to be studied through a regular Sasaki quotient. 
Suppose that a compact torus $\widehat{\T} \subset \Aut_{\mathrm{CR}}(N,\CD,J_N)$ acts on $N$. 
Fix a regular Sasaki structure $\chi$ and a possibly irregular $\xi$ belonging to the Lie algebra of $\widehat{\T}$; in particular, $\chi$ and $\xi$ commute.
Let $(X_\chi,\om_\chi)$ be the regular Sasaki quotient associated with the circle action generated by $\xi^\CD$.

\smallskip
The $\widehat{\T}$-action descends to an action of the torus $\T := \widehat{\T}/\BS^1_\chi \subset \Aut_{\red}(X_\chi)$. 
For every $\T$-invariant $\om_\chi$-relative K\"ahler potential $\vph$ on $X_\chi$, Apostolov--Calderbank \cite[Section~6.2]{Apostolov_Calderbank_2021} associate a deformation $(\CD_\vph,J_\vph,\xi_\vph)$ of the reference Sasaki structure $(\CD,J_N,\xi)$, while keeping the Sasaki--Reeb vector field equal to $\xi^\CD$. 
We denote the resulting family of Sasaki structures by $\CS_\chi(\xi^\CD)$. 

\smallskip
The key feature of this construction is that $(\CD_\vph,J_\vph)$ is Sasaki extremal with Sasaki--Reeb vector field $\xi^\CD$ if and only if
\[
    \om_\vph := \om_\chi+ \ddc_\chi\vph
\]
is a $(v_\xi,w_\xi)$-extremal K\"ahler metric on $(X_\chi,[\om_\chi],\BT)$, in the sense of Lahdili \cite{Lahdili_2019}, for suitable weight functions $v_\xi$ and $w_\xi$. 
Here $\dc_\chi$ denotes the twisted exterior derivative on $X_\chi$. 
Thus, the existence problem for extremal Sasaki structures in $\CS_\chi(\xi^\CD)$ reduces to a weighted extremal problem in the fixed K\"ahler class $[\om_\chi]$. Moreover, we stress that the weight \(v_\xi\) arising in this setting is $\log$-convex. 

\smallskip
Combining Theorem~\ref{t:A}, Corollary~\ref{c:B}, and the aforementioned Apostolov--Calderbank construction \cite{Apostolov_Calderbank_2021}, we obtain the following YTD type correspondence. 
It characterizes the existence of possibly irregular extremal Sasaki structures in terms of the relative uniform weighted K-stability of the regular Sasaki quotient $(X_\chi, [\om_\chi])$.

\begin{bigcor}[= Corollary~\ref{c:sasaki}]
The following statements are equivalent.
\begin{itemize}
    \item There exists an extremal Sasaki structure $(\CD_\vph,J_\vph,\xi_\vph) \in \CS_\chi(\xi^\CD)$.
    \item $(X_\chi,[\om_\chi])$ is $\T$-relatively uniformly $(v_\xi,w_\xi)$-weighted K-stable.
\end{itemize}
\end{bigcor}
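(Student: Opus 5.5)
The corollary follows by chaining three equivalences: the Apostolov--Calderbank dictionary, Theorem~\ref{t:A}, and Corollary~\ref{c:B}, all applied on the regular quotient $(X_\chi,[\om_\chi],\T)$ with weights $(v_\xi,w_\xi)$. First we would record the setup of \cite[Section~6.2]{Apostolov_Calderbank_2021}. Since $\chi$ is regular, $X_\chi$ is a compact K\"ahler manifold and $\T=\widehat{\T}/\BS^1_\chi\subset\Aut_{\red}(X_\chi)$ is a compact torus. Every element of $\CS_\chi(\xi^\CD)$ is of the form $(\CD_\vph,J_\vph,\xi_\vph)$ for a $\T$-invariant $\om_\chi$-relative K\"ahler potential $\vph$. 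Moreover, $(\CD_\vph,J_\vph)$ is extremal with Sasaki--Reeb field $\xi^\CD$ if and only if $\om_\vph=\om_\chi+\ddc_\chi\vph$ is $(v_\xi,w_\xi)$-extremal in $[\om_\chi]$.

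The first thing to check is that $(v_\xi,w_\xi)$ are admissible weights in the sense of Theorem~\ref{t:A}, i.e.\ that they lie in $\CC^\infty_{>0}(\Pol_{[\om_\chi]})^2$, or at least that $v_\xi>0$ is smooth and $w_\xi$ smooth, which is all that the estimates in Theorem~\ref{t:esti} and Theorem~\ref{thm:main2} require. In \cite{Apostolov_Calderbank_2021}, $v_\xi$ is (up to normalization) a negative power of the affine function $\langle\xi,\cdot\rangle$, and this function is positive on the polytope because $\xi$ lies in the Reeb cone. Hence $v_\xi$ is smooth and positive on $\Pol_{[\om_\chi]}$, though log-convex rather than log-concave. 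This is exactly why the earlier results do not apply and why Theorem~\ref{t:A} is needed.

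With this in place, Theorem~\ref{t:A} shows that existence of a $(v_\xi,w_\xi)$-extremal metric is equivalent to relative $\T^\BC$-coercivity of $\M^{\rel}_{v_\xi,w_\xi}$. The remaining step is to place ourselves in the polarized setting of Corollary~\ref{c:B}. For a regular (quasi-regular with free action) Sasaki structure, $N\to X_\chi$ is a principal circle bundle and $[\om_\chi]$ is, up to the normalizing factor, $2\pi c_1(L)$ for the associated ample line bundle $L$. So Corollary~\ref{c:B} applies directly, and coercivity is equivalent to $\T$-relative uniform $(v_\xi,w_\xi)$-weighted K-stability.

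Composing the three equivalences gives the statement. We expect the main obstacle to be bookkeeping rather than analysis. One must verify that the normalizations of the moment maps and the weights in \cite{Apostolov_Calderbank_2021} match those used for $\Pol_\alpha$ and $\M^{\rel}_{v,w}$ here. One must also check that the $\T$-invariant potentials parametrizing $\CS_\chi(\xi^\CD)$ are exactly the $\T$-invariant K\"ahler metrics in $[\om_\chi]$ used in Theorem~\ref{t:A}, so that no further restriction on the potential space arises.
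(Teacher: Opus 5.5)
Your proposal is correct and follows the paper's argument. The paper likewise uses the Apostolov--Calderbank(--Legendre) dictionary to identify extremal Sasaki structures in $\CS_\chi(\xi^\CD)$ with $(v_\xi,w_\xi)$-extremal metrics in $[\om_\chi]$, for the smooth positive (log-convex) weights $v_\xi=\ell_\xi^{-n-1}$, $w_\xi=\ell_\xi^{-n-3}$, and then applies Theorem~\ref{t:A} and Corollary~\ref{c:B} on the polarized quotient $(X_\chi, 2\pi c_1(L_\chi)=[\om_\chi])$; the only difference is that Corollary~\ref{c:B} already packages existence $\Leftrightarrow$ stability, so your separate call to Theorem~\ref{t:A} just makes the coercivity step inside it explicit.
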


\subsection*{Organization of the article}
\begin{itemize}
    \item Section~\ref{s:pre} recaps the framework of weighted cscK geometry, its variational picture, and some useful facts for estimates in the next section.
    \item Section~\ref{s:esticsck} is the most technical part of the article, which contains the proofs of Theorems \ref{t:esti} and \ref{t:A}. 
    \item In Section~\ref{sec:wKRS}, we provide a much simpler proof of Laplacian estimates for weighted K\"ahler--Ricci solitons.
    \item Section~\ref{s:ytd} discusses applications to Sasaki geometry and conformally K\"ahler Einstein--Maxwell geometry, and in particular proves Corollary~\ref{c:sasaki}. 
\end{itemize}

\begin{ack*}
The authors are grateful to Vestislav Apostolov, Eleonora Di~Nezza, Abdellah Lahdili, Eveline Legendre, Tran-Trung Nghiem, Tat Dat T\^o and Antonio Trusiani for helpful discussions. 
The authors also thank S\'ebastien Boucksom and Mattias Jonsson for their interest in this work. 
S.J. is supported by the Ministero dell'Universit\`a e della Ricerca of Italy – Avviso FIS2 grant FIS-2023-0395. 
\end{ack*}

\begin{AID*}
The Chern--Lu approach originated with the authors, who first used it, without AI assistance, to remove the log-concavity assumption for weighted K\"ahler--Ricci solitons (see Claim~\ref{claim:dmdm_ddcm} and Section~\ref{sec:wKRS}). 
The authors shared the weighted K\"ahler--Ricci solitons draft with an LLM, and after several discussions, it provided a strategy for proving Theorem~\ref{thm:main1} and the twisted version. 
The authors then compared this strategy (with AI assistance) with their existing incomplete draft on weighted cscKs (which contains most of the computations in the current version of Section~\ref{sec:wLap_tr} and Section~\ref{sec:auxiliary}, as well as unsuccessful attempts to build the complete proof), simplified the argument in this new strategy, and reworked the estimates to follow more closely the approach used in the proof of \cite[Proposition~5.6]{Pan_To_Trusiani_2023}, which had also guided the authors' initial attempts in the beginning of the project. 
The main mathematical input from the LLM was a strategy for analyzing the term $\CI$ in Section~\ref{eq:analyze_I}, in particular the term $I_1$, and the introduction of an original form of an auxiliary function $H$ (slightly different from the current version) that simplifies the computations. 
An LLM was also used to polish the English and suggest improvements to the exposition. 
The authors wrote the entire manuscript and take full responsibility for its content and correctness. 
\end{AID*}

\section{Preliminaries}{\label{s:pre}}
In this article, we use the normalization $\dc = \ii (\db - \pl)$ and $\ddc = 2\ii \ddb$. 
Denote by 
\[
    \tr_\om \ta = \frac{n \ta \w \om^{n-1}}{\om^n}, 
    \quad
    \langle \dd f, \dd g \rangle_\om = \frac{n \dd f \w \dc g \w \om^{n-1}}{\om^n}, 
\]
\[
    \Dt_\om f = \tr_\om (\ddc f) = \frac{n \ddc f \w \om^{n-1}}{\om^n},
    \quad
    |\dd f|_\om^2 = \langle \dd f, \dd f \rangle_\om = \frac{n \dd f \w \dc f \w \om^{n-1}}{\om^n}.
\]
We also take $\Ric(\om) := -\ddc \log \om^n$ \footnote{We remark that the Ricci form here is twice the \emph{usual} Ricci form in the literature.}. 

\subsection{Weighted constant scalar curvature K\"ahler metrics}{\label{s:csck}}
Let $X$ be an $n$-dimensional compact K\"ahler manifold, let $\Aut_{\red}(X)$ denote its reduced automorphism group, and fix a K\"ahler class $\alpha \in H^2(X,\BR)$. 
Let $\T\subset\Aut_{\red}(X)$ be a real torus, and choose a $\T$-invariant K\"ahler metric $\omega \in \alpha$. 
It is well known (see e.g. \cite{Gauduchon_book}) that the $\T$-action on $(X,\omega)$ is Hamiltonian, and we denote the corresponding moment map by $m_\om : X \to \mathfrak{t}^\vee$, where $\mathfrak{t}^\vee$ is the dual of the Lie algebra $\mathfrak{t}$ of $\T$. 
By the Atiyah--Guillemin--Sternberg convexity theorem \cite{Atiyah_82, Guillemin_Sternberg_82}, its image is a compact convex polytope $\Pol_\om = \im(m_\om) \subset \mathfrak{t}^\vee$.

\smallskip
We consider the space of $\T$-invariant $\omega$-relative K\"ahler potentials
\[
    \CH(X,\omega)^\T 
    := \left\{\vph \in \CC^\infty(X,\BR)^\BT \mid \om_\vph = \om + \ddc \vph > 0\right\}.
\]
For each $\vph \in \CH(X,\omega)^\T$, we normalize the moment map of ${\omega_\varphi}$ by requiring
\begin{equation}{\label{norm:moment}}
    m_{\omega_\varphi} = m_\omega+ \dc\varphi.
\end{equation}
By \cite[Lemma~1]{Lahdili_2019}, this normalization ensures that the image of $m_{\omega_\varphi}$ is independent of $\varphi \in \CH(X,\omega)^\T$. 
We therefore denote the common image by $\Pol_\alpha$ and refer to it as the \textit{moment polytope} of $(X,\alpha,\T)$. 
In the sequel, we assume this normalization throughout.

\smallskip
Lahdili \cite{Lahdili_2019} introduced weighted cscK metrics as a framework unifying all the classical notions of canonical K\"ahler metrics. 
We shall give a quick overview of Lahdili's package below. 

\smallskip
For simplicity, write $\homg:=\omega_\varphi$. 
Given a positive weight $v\in \CC^\infty_{>0}(\Pol_\alpha)$, we define the \textit{$v$-weighted Ricci form} by
\begin{equation*}
    \Ric_v(\homg)
    := \Ric(\homg) - \ddc \log v(m_\homg).
\end{equation*}
For a $\BT$-invariant closed $(1,1)$-form $\bt$ equipped with a moment map $m_\bt$, its \emph{$v$-weighted trace} with respect to $\homg$ is 
\[
    \tr_{\homg,v}(\beta)
    := \tr_{\homg}(\beta)
    + \left\langle (D \log v)(m_{\homg}),m_\beta\right\rangle.
\]
Here $(D \log v)(m_{\homg})$ denotes the differential of $\log v$ on $\mathfrak{t}^\vee$ evaluated at $m_\homg$, and the pairing is the natural one between $\mathfrak{t}$ and $\mathfrak{t}^\vee$. 
The \emph{$v$-weighted scalar curvature} of a $\T$-invariant K\"ahler metric $\homg$ is then defined by
\begin{equation*}
    \Scal_v(\homg)
    :=v(m_{\homg})
     \mathrm{tr}_{\homg,v}\bigl(\Ric_v(\homg)\bigr),
\end{equation*}
with the natural moment map normalization for the weighted Ricci form.
We also have the \emph{$v$-weighted Laplacian} with respect to $\homg$ given as follows   
\[
    \Dt_{\homg,v} f 
    = \tr_{\homg,v} (\ddc f) 
    = \Dt_\homg f + \langle \dd \log v(m_\homg), \dd f\rangle_\homg. 
\]

\smallskip
The weighted scalar curvature also admits a moment map interpretation extending the classical Donaldson--Fujiki picture \cite{Fujiki_1992, Donaldson_1997}.
More precisely, it arises from the $\BT$-equivariant Hamiltonian diffeomorphisms on the space of $\BT$-invariant compatible almost complex structures (see e.g. \cite{Lahdili_2019}). 

\smallskip
Given a second weight $w \in \CC^\infty(\Pol_\alpha)$, a $\T$-invariant K\"ahler metric $\homg$ is called \emph{$(v,w)$-weighted cscK} if 
\begin{equation}{\label{wcsck}}
    \Scal_v(\homg)=w(m_{\homg}).
\end{equation}
The relation between \eqref{wcsck} and various geometric problems is discussed extensively in \cite{Lahdili_2019}. 
In this article, an important class of examples arises from the extremal Sasaki structures (see Section~\ref{s:sasaki}). 

\smallskip
We next introduce the notion of weighted extremal K\"ahler metrics, following \cite{Lahdili_2019}, as a weighted generalization of Calabi's original notion \cite{Calabi_1982}. 
Assume from now on that $w > 0$ on $\Pol_\af$. 
There is a unique affine-linear function $\ell^{\mathrm{ext}}_{v,w}$ characterized by the vanishing of the weighted Futaki invariant $\Fut_{v,w}$: 
\begin{equation}{\label{lext}}
    \Fut_{v,w}(\ell)
    := \int_X
    \bigl( \Scal_v(\omega) - w(m_\omega) \ell^{\mathrm{ext}}_{v,w}(m_\omega) \bigr) \ell(m_\omega)\,
    \omega^n = 0, \quad \forall \ell \in \mathrm{Aff}(\mathfrak{t}^\vee).
\end{equation}
A $\T$-invariant K\"ahler metric $\homg \in \af$ is then called \emph{$(v,w)$-extremal} if it satisfies
\[
    \Scal_v(\homg)
    = (w\ell^{\mathrm{ext}}_{v,w})(m_{\homg}).
\]
Thus, $(v,w)$-extremal metrics are precisely $(v,w\ell^{\mathrm{ext}}_{v,w})$-cscK metrics, for a weight $w>0$. 
In particular, when $v = w = 1$, this reduces to the classical notion of Calabi extremal K\"ahler metrics with the extremal vector field lying in $\mathfrak{t}$. 

\smallskip
An important feature of weighted extremal metrics is their uniqueness up to automorphism. 
More precisely, $\BT$-invariant $(v,w)$-extremal metrics in $\af$ are unique up to the identity component of the centralizer of $\BT$ in $\Aut_{\red}(X)$. 
In particular, if $\T$ is maximal, they are unique up to the complexified action $\BT^\BC$ (cf. \cite{Apostolov_Jubert_Lahdili_2023, Lahdili_2023}).

\subsection{Variational picture}
The existence of extremal metrics admits a variational formulation. 
In the unweighted setting, the relative Mabuchi functional \cite{Mabuchi_1987} plays an important role in connecting the existence of the extremal metric and an algebraic notion of stability.
The relative Mabuchi functional $\M^{\rel}$, defined on $\CH(X,\om)^\T$, is an Euler--Lagrange functional for 
\[
    \CH(X,\om)^\T \ni \vph \mapsto \left(\ell^{\mathrm{ext}}(m_{\om_\vph})-\Scal(\om_\vph)\right) \om_\vph^n, 
\]
where $\ell^{\mathrm{ext}}:=\ell^{\mathrm{ext}}_{1,1}$ is the \textit{unweighted} extremal affine-linear function \eqref{lext}.

\smallskip
A series of works by many experts \cite{Darvas_Rubinstein_2017, Boucksom_Hisamoto_Jonsson_2019, He_2019, Berman_Darvas_Lu_2020, Chen_Cheng_2021_2, Chi_Li_2022_2, Boucksom_Jonsson_2025, Darvas_Zhang_25, Trusiani_2026} has characterized the existence of an extremal metric on polarized manifolds in terms of relative uniform K-stability. 
A key analytic ingredient in this correspondence is the equivalence between the existence of an extremal K\"ahler metric and coercivity of the relative Mabuchi functional modulo automorphisms. 

\smallskip
To state the coercivity condition precisely, we recall the related energy functionals. 
For $\varphi \in \CH(X,\om)^\T$, the energy functional is defined by 
\[
    \E(\varphi) := \frac{1}{(n+1)\Vol(X,\om)} \sum_{j=0}^n \int_X \varphi \om_\varphi^j \w \om^{n-j},
\]
where $\Vol(X,\om) = \int_X \om^n$. 
It is a primitive of the normalized complex Monge--Amp\`ere operator. 
The associated $\J$-functional is given by  
\[
    \J(\varphi) = \frac{1}{\Vol(X,\om)} \int_X \varphi \om^n - \E(\varphi) \geq 0.
\]
These functionals provide a way to measure ``distance'' between two potentials in $\CH(X,\om)^\T$. 
Both $\M^{\rel}$ and $\J$ are preserved up to adding constants; therefore, they can be regarded as functionals defined over the space of $\T$-invariant K\"ahler metrics within $[\om]$.

\smallskip
We say that a functional $\mathbf{F}$ defined on the space of $\T$-invariant K\"ahler metrics in $\alpha$ is \emph{relatively $\T^{\BC}$-coercive} if there exist constants $\sm, C > 0$ such that  
\[
    \mathbf{F}(\homg) \geq \sm \inf_{\gm \in \T^\BC} \J(\gm^\ast \homg) - C
\]
for every $\T$-invariant K\"ahler metric $\homg \in [\om]$. 

\smallskip
The weighted cscK problem admits a similar variational formulation. 
Using the notation $\Scal_v(\homg) = v(m_\homg) \tr_{\homg,v}(\Ric_v(\homg))$ introduced before, the weighted Mabuchi functional $\M_{v,w}$ introduced in \cite{Lahdili_2019} is characterized by 
\[
    \frac{\dd}{\dd t}_{|t=0} \M_{v,w}(\vph + tf) = \int_X f (w(m_{\om_\vph}) - \Scal_v(\om_\vph)) v(m_{\om_\vph}) \om_\vph^n
    \quad\text{and}\quad
    \M_{v,w}(0) = 0, 
\]
for every smooth $\BT$-invariant function $f$ and $\vph \in \CH(X,\om)^\BT$. Assuming that the weights $(v,w)$ satisfy the normalization condition
\begin{equation}{\label{norma:weigh}}
    \int_X \bigl(\Scal_v(\om) - w(m_\om)\bigr)\om^n = 0,
\end{equation}
the functional $\M_{v,w}$ is invariant under the addition of constants and therefore descends to a functional on the space of $\T$-invariant K\"ahler metrics. We note that the above condition is cohomological by \cite[Lemma~2]{Lahdili_2019}, and is necessary for the existence of a $(v,w)$-cscK metric in $\alpha$. We also denote by 
\[
    \M_{v,w}^{\rel} 
    := \M_{v,w\ell_{v,w}^{\ext}}
\]
the relative weighted Mabuchi energy.

\smallskip
In the works \cite{Apostolov_Jubert_Lahdili_2023, DiNezza_Jubert_Lahdili_2024, Han_Liu_2024, DiNezza_Jubert_Lahdili_2024_b, Boucksom_Jonsson_2025, Trusiani_2026}, 
the authors establish the corresponding existence-coercivity and stability results under the additional assumption that $v$ is $\log$-concave. 
The $\log$-concavity assumption enters in the implication from relative $\T^\BC$-coercivity of $\M_{v,w}$ to the existence of weighted cscKs. 
More precisely, the assumption is needed for establishing a uniform integral Laplacian estimate along the continuity path towards the weighted cscK metrics.  
Removing this assumption from these estimates is the main analytic objective of the present work. 

\subsection{Useful facts}
We then recall some lemmas that are useful for removing the $\log$-concavity assumption from the estimates. 
The Chern--Lu inequality (cf. \cite[Proposition~7.2]{Rubinstein_2014}) will be one of the key ingredients distinguishing our proof from earlier works \cite{DiNezza_Jubert_Lahdili_2024, Han_Liu_2024}. 

\begin{lem}[Chern--Lu inequality]\label{lem:Chern-Lu}
Let $X$ be a complex manifold equipped with two K\"ahler metrics $\om$ and $\homg$. 
Assume that there is a constant $B \in \BR$ such that $\Bisec(\om) \leq B$. 
Then 
\[
    \Dt_\homg \log \tr_\homg \om
    \geq \frac{\langle \Ric(\homg) , \om \rangle_\homg}{\tr_\homg \om} - B \tr_\homg \om.
\]
\end{lem}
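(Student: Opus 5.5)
We prove the Chern--Lu inequality by a pointwise computation at an arbitrary point $p \in X$, using coordinates adapted to both metrics. Choose holomorphic coordinates $(z^1,\dots,z^n)$ near $p$ that are normal for $\homg$: $\hat g_{i\bar j}(p)=\delta_{ij}$ and $\dd \hat g_{i\bar j}(p)=0$. In the same coordinates, make $g_{i\bar j}(p)$ diagonal, $g_{i\bar j}(p)=\lambda_i\delta_{ij}$ with $\lambda_i>0$, by a unitary change with respect to $\homg$. Put $u := \tr_\homg \om = \hat g^{i\bar j} g_{i\bar j}$. Up to the factor fixed by the normalization $\ddc = 2\ii\ddb$, $\Dt_\homg = \hat g^{k\bar l}\pl_k\pl_{\bar l}$ and $\Ric(\homg)_{k\bar l} = -\pl_k\pl_{\bar l}\log\det \hat g$. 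These factors of $2$ appear in the same way on both sides of the inequality, so I will track them only at the end.

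The first step is to compute $\Dt_\homg u$ at $p$. Differentiating $\hat g^{i\bar j}$ twice and using $\dd\hat g(p)=0$, the term $\pl_k\pl_{\bar k}\hat g^{i\bar j}$ becomes $-\pl_k\pl_{\bar k}\hat g_{i\bar j}$. The standard identity in normal coordinates gives $\sum_k \pl_k\pl_{\bar k}\hat g_{i\bar j} = \widehat{\Ric}_{i\bar j}$ up to the usual sign convention. This leaves
\[
\Dt_\homg u = \sum_{i,j} \widehat{\Ric}_{i\bar j}\, g_{j\bar i} + \sum_{i,k} \pl_k\pl_{\bar k} g_{i\bar i}.
\]
The first sum is exactly $\langle \Ric(\homg),\om\rangle_\homg$. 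For the second sum, write $\pl_k\pl_{\bar k} g_{i\bar i} = -R^{\om}_{i\bar i k\bar k} + \sum_{a,b} g^{a\bar b}\,\pl_k g_{i\bar b}\,\overline{\pl_{k} g_{a\bar i}}$. Here $R^\om$ is the curvature of $\om$, and the second term is nonnegative.

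The second step controls the curvature term using $\Bisec(\om)\le B$. At $p$ the vectors $\pl_k$ are $\om$-orthogonal with $|\pl_k|_\om^2=\lambda_k$, so
\[
R^\om_{i\bar i k\bar k} \le B\,\lambda_i\lambda_k .
\]
Summing over $i,k$ gives at least $-B\big(\sum_i\lambda_i\big)^2 = -B u^2$. Hence $\Dt_\homg u \ge \langle\Ric(\homg),\om\rangle_\homg - B u^2 + \sum g^{a\bar b}\pl_k g_{i\bar b}\overline{\pl_k g_{a\bar i}}$.

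The third step passes to the logarithm, using $\Dt_\homg\log u = \Dt_\homg u/u - |\pl u|_\homg^2/u^2$. I expect this to be the only genuinely delicate step: one must show that the gradient term is absorbed by the good third-derivative term, namely
\[
|\pl u|^2_\homg \le u \sum_{i,k,a} \lambda_a^{-1}|\pl_k g_{i\bar a}|^2 .
\]
At $p$ one has $\pl_k u = \sum_i \pl_k g_{i\bar i}$, so the claim follows from Cauchy--Schwarz:
\[
\Big|\sum_i \pl_k g_{i\bar i}\Big|^2 \le \Big(\sum_i\lambda_i\Big)\Big(\sum_i \lambda_i^{-1}|\pl_k g_{i\bar i}|^2\Big).
\]
Summing over $k$ and dropping the off-diagonal terms $a\ne i$ (which are nonnegative) gives the claim. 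The Kähler condition $\pl_k g_{i\bar a}=\pl_i g_{k\bar a}$ is not even needed for this form. Dividing by $u$ then yields
\[
\Dt_\homg\log\tr_\homg\om \ge \frac{\langle \Ric(\homg),\om\rangle_\homg}{\tr_\homg\om} - B\,\tr_\homg\om .
\]
Since $p$ was arbitrary, this is the stated inequality. The last check is that the normalization ($\Ric = -\ddc\log\om^n$ and $\Dt=\tr\,\ddc$) scales $\Dt$, $\Ric$ and the bisectional curvature bound consistently, so that no extra constant factor appears.
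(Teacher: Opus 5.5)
Your proof is correct, and it is the standard argument. The paper gives no proof of its own and simply cites Rubinstein's Proposition~7.2, whose proof is this same computation: $\homg$-normal coordinates with $\om$ diagonal, the Chern curvature formula for $\om$, and the Cauchy--Schwarz absorption $|\pl u|_\homg^2\le u\sum_{i,k,a}\lambda_a^{-1}|\pl_k g_{i\bar a}|^2$.

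Two small bookkeeping slips. First, the nonnegative term should read $g^{a\bar b}\,\pl_k g_{i\bar b}\,\overline{\pl_k g_{i\bar a}}$, which is the form you actually use in Step~3. Second, your closing normalization check is asserted rather than verified, and with the classical bisectional curvature it is false. Since $\ddc=2\ii\ddb$ doubles $\Dt$ and $\Ric$ but not the curvature tensor, you get $2B$ in place of $B$; the case $\homg=\om=\om_{FS}$ on $\BP^1$ shows that $B$ alone fails. So the stated form needs $\Bisec$ to be normalized consistently with the doubled Ricci form, which is harmless for how the paper uses the lemma.
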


We shall also use the following standard equality (see e.g. \cite[Lemma~4.7]{Szekelyhidi_book}) and the integration by parts formula of the weighted Laplacian. 

\begin{lem}\label{lem:tensor_eq_Sze}
Let $\af$, $\bt$ be two $(1,1)$-forms and let $\om$ be a hermitian metric. 
Then
\[
    n(n-1) \af \w \bt \w \om^{n-2} 
    = [(\tr_\om \af)(\tr_\om \bt) - \langle \af,\bt \rangle_\om] \om^n.
\]
In particular, if $\af \geq 0$ and $\bt \geq 0$, 
\[
    n (n-1) \af \w \bt \w \om^{n-2} 
    \leq (\tr_\om \af) (\tr_\om \bt) \om^n.
\]
\end{lem}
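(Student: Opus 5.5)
This identity is pointwise linear algebra, so I will work at a fixed point $x \in X$ and choose coordinates adapted to the hermitian metric. Since $\om$ is a positive $(1,1)$-form, I can pick local holomorphic coordinates with $\om = \ii \sum_k \dd z_k \w \dd \bar z_k$ at $x$, up to the normalization constants fixed by $\ddc = 2\ii\ddb$. Both sides of the identity are $\BR$-bilinear in $(\af,\bt)$. I will therefore not diagonalize both forms simultaneously. Instead I will reduce to elementary forms $\af = \ii \dd z_a \w \dd \bar z_b$ and $\bt = \ii \dd z_c \w \dd \bar z_d$, extended complex-bilinearly, with the pairing $\langle \af, \bt\rangle_\om = \sum_{a,b} \af_{a\bar b}\overline{\bt_{a\bar b}}$ read in its bilinear form $\sum \af_{a\bar b}\bt_{b\bar a}$.

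The main step is the wedge computation. First, $\om^{n-2}/(n-2)! = \sum_{|S|=n-2} \prod_{k\in S} (\ii \dd z_k \w \dd \bar z_k)$. The product $\af \w \bt \w \om^{n-2}$ is nonzero only when $\{a,c\} = \{b,d\}$ as sets, with $a\neq c$, and the complementary indices then make up $S$. This leaves two cases. In the case $a=b$, $c=d$, $a\neq c$, one gets $(n-2)!$ times the volume element $\om^n/n!$. In the case $a=d$, $c=b$, $a\neq c$, one gets the same quantity with a minus sign, coming from reordering $\dd z_a \w \dd \bar z_c \w \dd z_c \w \dd \bar z_a$.

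Summing over indices then gives
\[
    n(n-1)\, \af\w\bt\w\om^{n-2} = \Bigl(\sum_{a\neq c} \af_{a\bar a}\bt_{c\bar c} - \sum_{a\neq c}\af_{a\bar c}\bt_{c\bar a}\Bigr)\om^n .
\]
The diagonal terms $a=c$ cancel between the two sums, so they can be added back freely. This yields $(\tr_\om\af)(\tr_\om\bt) - \langle\af,\bt\rangle_\om$, with trace normalization consistent with $\tr_\om\ta = n\ta\w\om^{n-1}/\om^n$. The only real obstacle is bookkeeping of constants, namely the factor $2$ in $\ddc$ and the sign of the reordering. I would fix these by testing with $\af=\bt=\om$, which forces the identity $n(n-1) = n^2 - n$.

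For the inequality with $\af,\bt\geq 0$, I would diagonalize $\af$ at $x$ with respect to $\om$, so that $\af = \sum \lambda_a \ii \dd z_a\w \dd\bar z_a$ with $\lambda_a\geq 0$. Then $\langle \af,\bt\rangle_\om = \sum_a \lambda_a \bt_{a\bar a}$, and this is $\geq 0$ because the diagonal entries of the semipositive form $\bt$ are nonnegative. Substituting into the identity gives the claimed bound.
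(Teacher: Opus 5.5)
Your proof is correct. The bilinear reduction to elementary forms $\ii\,\dd z_a\w\dd\bar z_b$, the sign from reordering $\dd z_a\w\dd\bar z_c\w\dd z_c\w\dd\bar z_a$, and the nonnegativity $\langle\af,\bt\rangle_\om=\sum_a\lambda_a\bt_{a\bar a}\geq 0$ after unitarily diagonalizing $\af$ all check out. The paper gives no argument of its own and cites Lemma~4.7 of Sz\'ekelyhidi's book; that lemma is a pointwise coordinate computation with $\om$ made standard, so your route is essentially the intended one.
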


We next recall an integration by parts formula for the weighted Laplacian from \cite[Appendix~A]{DiNezza_Jubert_Lahdili_2024}: 
\begin{lem}\label{lem:IPP_wLap}
For all smooth functions $f$ and $g$,
\[
    \int_X g (\Dt_{\om,v} f) v(m_\om) \om^n 
    = - \int_X \langle \dd f, \dd g\rangle_\om v(m_\om) \om^n
    = \int_X f (\Dt_{\om,v} g) v(m_\om) \om^n,
\]
where $\Dt_{\om,v} f = \tr_{\om,v} (\ddc f) = \Dt_\om f + \langle \dd \log v(m_\om), \dd f \rangle_\om$ is the weighted Laplacian.
In particular, we have 
\[
    \int_X (\Dt_{\om,v} f) v(m_\om) \om^n = 0.
\]
\end{lem}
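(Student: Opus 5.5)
The plan is to write the weighted Laplacian, multiplied by the weight and the volume form, as an exact form, and then apply Stokes' theorem on the compact manifold $X$. Since $v$ is smooth and positive on a neighbourhood of $\Pol_\om$ and $m_\om$ is smooth, the function $V := v(m_\om)$ is smooth and positive on $X$. It is also $\T$-invariant, although we will not need this. By the chain rule, $\dd \log v(m_\om) = V^{-1}\, \dd V$.

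\emph{Step 1: the weighted Laplacian is a divergence.} Using the definitions from the beginning of Section~\ref{s:pre}, we have
\[
    V\,(\Dt_{\om,v} f)\,\om^n
    = n\, V\, \ddc f \w \om^{n-1} + n\, \dd V \w \dc f \w \om^{n-1}.
\]
Since $\dd \om = 0$, the Leibniz rule gives $\dd\bigl(V\,\dc f \w \om^{n-1}\bigr) = \dd V \w \dc f \w \om^{n-1} + V\, \ddc f \w \om^{n-1}$. Comparing the two expressions yields the key identity
\[
    V\,(\Dt_{\om,v} f)\,\om^n = n\,\dd\bigl(V\,\dc f \w \om^{n-1}\bigr).
\]

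\emph{Step 2: Stokes.} We multiply the identity by $g$ and integrate over $X$. Since $X$ has no boundary, $\int_X \dd\bigl(g\, V\, \dc f\w\om^{n-1}\bigr) = 0$, and therefore
\[
    \int_X g\,(\Dt_{\om,v} f)\,V\,\om^n
    = -n\int_X V\,\dd g\w\dc f\w\om^{n-1}
    = -\int_X \langle \dd g,\dd f\rangle_\om\, V\,\om^n .
\]
The last equality is the definition of the pairing $\langle\cdot,\cdot\rangle_\om$.

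\emph{Step 3: symmetry.} We use the symmetry $\dd g\w\dc f\w\om^{n-1} = \dd f\w\dc g\w\om^{n-1}$. To check it, write $\dd = \pl+\db$ and $\dc = \ii(\db-\pl)$. Only the $(1,1)$-components survive after wedging with $\om^{n-1}$, so the form equals $\ii\,(\pl g\w\db f + \pl f\w\db g)\w\om^{n-1}$, which is symmetric in $f$ and $g$. Hence $\langle \dd g,\dd f\rangle_\om = \langle \dd f,\dd g\rangle_\om$. Running Step 2 with the roles of $f$ and $g$ exchanged then gives the second equality of the lemma.

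Finally, taking $g\equiv 1$ gives $\dd g = 0$, which yields $\int_X (\Dt_{\om,v} f)\, v(m_\om)\,\om^n = 0$. No step is a genuine obstacle. The only point that needs care is the normalization in Step 1, namely checking that the factor $n$ and the convention $\ddc = \dd\dc$ match the paper's definitions of $\Dt_\om$ and $\langle\cdot,\cdot\rangle_\om$.
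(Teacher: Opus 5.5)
Your proof is correct and complete. The divergence identity $v(m_\om)\,(\Dt_{\om,v} f)\,\om^n = n\,\dd\bigl(v(m_\om)\,\dc f\w\om^{n-1}\bigr)$ uses $\ddc=\dd\dc$ and $\dd\om=0$, and it checks out with the paper's normalizations. The same holds for the Stokes step and for the symmetry argument based on the $(1,1)$-part of $\dd g\w\dc f$.

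The paper gives no proof of its own here; it simply quotes the lemma from \cite[Appendix~A]{DiNezza_Jubert_Lahdili_2024}. Your argument is the standard self-contained proof of this fact. It also shows that neither $\T$-invariance nor the moment-map origin of the weight is used: any smooth positive function in place of $v(m_\om)$ works. The moment map matters only for identifying this operator with the weighted trace $\tr_{\om,v}(\ddc f)$.
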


\section{Integral estimates of Laplacian for weighted cscK equations}{\label{s:esticsck}}
In this section, we shall give a proof of integral estimates for the Laplacian of $(v,w)$-cscK equations and their twists along the continuity path in \cite{DiNezza_Jubert_Lahdili_2024_b}. 
Let us stress that our proof does not require $v$ to be ``$\log$-concave'', which is needed in the previous works \cite{DiNezza_Jubert_Lahdili_2024, Han_Liu_2024, DiNezza_Jubert_Lahdili_2024_b} and removing this assumption fills an important missing part in recent progress of the weighted uniform Yau--Tian--Donaldson correspondence \cite{Apostolov_Jubert_Lahdili_2023, Han_Liu_2024, DiNezza_Jubert_Lahdili_2024_b, Boucksom_Jonsson_2025, Trusiani_2026}.

\smallskip
Let $X$ be an $n$-dimensional compact K\"ahler manifold, let $\BT \subset \Aut_{\red}(X)$ be a real torus, and fix a $\BT$-invariant K\"ahler metric $\om$ in a given K\"ahler class $\af$.  
Consider two weight functions $v \in \CC_{>0}^\infty(\Pol_\af)$ and $w \in \CC^\infty(\Pol_\af)$ defined on the moment polytope $\Pol_\af = \im(m_\om) \subset \mathfrak{t}^\vee$.
The $(v,w)$-cscK equation \eqref{wcsck} is equivalent to
\begin{equation}\label{eq:wcscK}\tag{wcscK}
\begin{cases}
    v(m_{\om_\vph}) \om_\vph^n = e^F \om^n, \\
    \Dt_{\om_\vph,v} F = -w(m_{\om_\vph}) + \tr_{\om_\vph,v} \Ric(\om), 
\end{cases}
\end{equation}
where $\vph \in \CH(X,\om)^\BT$ and $F$ is a smooth $\BT$-invariant function.
We shall denote by $\homg = \om_\vph$ for simplicity. 
In the sequel, we fix a basis $(\xi_i)_{i=1,\cdots,r}$ of the Lie algebra $\mathfrak{t}$ of $\BT$, where $r = \dim \BT$. 

\smallskip
This section aims to prove the following integral estimate for the Laplacian of $\vph$:

\begin{thm}\label{thm:main1}
Let $(\vph, F)$ be a $\BT$-invariant pair solving \eqref{eq:wcscK}. 
For all $p \geq 1$, there is a constant $C_p > 0$ such that 
\[
    \|\tr_\homg \om\|_{L^{2p+2}(\om^n)} \leq C_p
    \quad\text{and}\quad
    \|\tr_\om \homg\|_{L^{\frac{2p+2}{n-1}}(\om^n)} \leq C_p,
\]
where $C_p$ depends only on $n$, $p$, $\om$, $\|\vph\|_{L^\infty}$, $\|F\|_{L^\infty}$, 
$\displaystyle \max_{\Pol_\af} \left(|v| + \sum_i |D_i \log v| + \sum_{i,j} |D_{ij}^2 \log v|\right)$, 
$\displaystyle \min_{\Pol_\af} v$, 
$\displaystyle \max_{\Pol_\af} |w|$, 
$\displaystyle \max_i \max_{\eta \in \Pol_\af}|\langle\xi_i,\eta\rangle_{\mathfrak{t},\mathfrak{t}^\vee}|$, and 
$\displaystyle \max_i \max_{x \in X}|m_{\Ric(\om)}^{\xi_i}(x)|$. 
\end{thm}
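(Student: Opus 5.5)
We apply the Chern--Lu inequality (Lemma~\ref{lem:Chern-Lu}) with the fixed metric $\om$ as target, which gives a lower bound for $\Dt_\homg \log \tr_\homg \om$. We then upgrade it to the weighted Laplacian $\Dt_{\homg,v}$, so that the weighted integration by parts of Lemma~\ref{lem:IPP_wLap} is available against the measure $v(m_\homg)\homg^n = e^F\om^n$. The latter measure is uniformly equivalent to $\om^n$ because $\|F\|_{L^\infty}$ and $\min v$ are controlled.

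\textbf{Step 1: weighted Chern--Lu inequality.} Taking $\ddc\log$ of the first equation in \eqref{eq:wcscK} gives
\[
\Ric(\homg) = \Ric(\om) - \ddc F + \ddc \log v(m_\homg).
\]
Expanding $\ddc \log v(m_\homg)$ yields
\[
\sum_{i,j} (D^2_{ij}\log v)(m_\homg)\, \dd m^{\xi_i}_\homg\w\dc m^{\xi_j}_\homg + \sum_i (D_i\log v)(m_\homg)\,\ddc m^{\xi_i}_\homg .
\]
We pair this with $\om$ in the $\homg$ inner product. The Hessian term needs the bound $\langle \dd m_\homg^{\xi_i}\w\dc m_\homg^{\xi_j},\om\rangle_\homg \le C\,\tr_\homg\om$. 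We expect this from $\dc m_\homg^{\xi} = \iota_\xi\homg$: the $\homg$-norm of $\iota_\xi \homg$ paired against $\om$ reduces to $\om(\xi,J\xi) \le C$, which is Claim~\ref{claim:dmdm_ddcm}. Hence this term costs at most $C$ after dividing by $\tr_\homg\om$, with no sign condition on $D^2\log v$.

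The term $\langle \ddc m^{\xi_i}_\homg,\om\rangle_\homg/\tr_\homg\om$ is not bounded a priori. We absorb it by computing $\Dt_{\homg,v}\log\tr_\homg\om$: the drift $\langle \dd\log v(m_\homg),\dd \log\tr_\homg\om\rangle_\homg$ produces terms of the same type, namely $\xi_i$-derivatives of $\tr_\homg\om$. Using $\T$-invariance and $\mathcal L_{J\xi}$ identities, these should cancel or combine with $\langle \ddc m^{\xi_i}_\homg,\om\rangle_\homg$ up to $O(1)$ errors.

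The term $-\langle \ddc F,\om\rangle_\homg$ is handled through $\Dt_{\homg,v}$ of the auxiliary function. Set
\[
u = \log\tr_\homg\om + F - A\vph,
\]
with $A$ large. Using $\Dt_\homg \vph = n - \tr_\homg\om$ and the second equation of \eqref{eq:wcscK} for $\Dt_{\homg,v}F$, we aim for
\[
\Dt_{\homg,v} u \ge \tr_\homg\om - C\bigl(1 + \text{terms in }\tr_\om\homg\text{ from } \tr_{\homg,v}\Ric(\om)\bigr).
\]
The contribution $\tr_{\homg}\Ric(\om) \ge -C\tr_\homg\om$ is absorbed by $A$. The moment map part $\langle D\log v, m_{\Ric(\om)}\rangle$ is bounded by the listed constants, as is $w$. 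One subtlety is that $\langle \ddc F,\om\rangle_\homg$ is not $\Dt_\homg F$. Here we follow \cite[Proposition~5.6]{Pan_To_Trusiani_2023}: introduce an auxiliary function $H$ built from $\tr_\homg\om$, $F$ and $\vph$, and control the mismatch, a term $\mathcal I$ containing $|\dd F|^2$-type quantities, by a gradient term from the Chern--Lu inequality.

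\textbf{Step 2: integral iteration.} Multiply the differential inequality by $e^{pu}$ (or $(\tr_\homg\om)^p e^{\dots}$) and integrate against $v(m_\homg)\homg^n$. Lemma~\ref{lem:IPP_wLap} turns the left side into a nonpositive gradient term $-p\int e^{pu}|\dd u|^2_\homg$. This gives
\[
\int (\tr_\homg\om)^{p+1}e^F\om^n \le C\int (\tr_\homg\om)^p(1+\dots)\,\om^n .
\]
The gradient term is used to absorb the $\dd F$ contributions via Cauchy--Schwarz. By Young's inequality we reach $\|\tr_\homg\om\|_{L^{2p+2}}\le C_p$; the exponent $2p+2$ arises from the squared gradient pairing. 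The second bound then follows pointwise:
\[
\tr_\om\homg \le \frac{1}{(n-1)!}\,(\tr_\homg\om)^{n-1}\,\frac{\homg^n}{\om^n} \le C(\tr_\homg\om)^{n-1},
\]
since $\homg^n/\om^n = e^F/v$ is bounded. This gives the $L^{(2p+2)/(n-1)}$ estimate.

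\textbf{Main obstacle.} The hard part is the analysis of the term $\mathcal I$ in Step 1: the cross terms involving $\ddc F$ paired with $\om$, together with the $D\log v$ drift, must be controlled by the gradient terms in the integrated inequality. Unlike the Aubin--Yau approach, we do not have a clean $\Dt F$ here. Our first attempt will be to integrate these terms by parts, moving derivatives off $F$ onto $e^{pu}$, and to use $\|F\|_{L^\infty}$ with Young's inequality so that everything is dominated by a small multiple of $\int e^{pu}|\dd u|^2$ plus lower powers of $\tr_\homg\om$.
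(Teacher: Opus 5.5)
Up to the pointwise differential inequality your outline agrees with the paper. The shared ingredients are Chern--Lu; the cancellation, exact in fact, of the $\CL_{J\xi_i}\tr_\homg\om$ terms against the drift $\langle\dd\log v(m_\homg),\dd\log\tr_\homg\om\rangle_\homg$; Claim~\ref{claim:dmdm_ddcm} for the Hessian of $\log v$; the splitting $-\langle\ddc F,\om\rangle_\homg/\tr_\homg\om=-\Dt_\homg F+\Dt_\Om F/\tr_\homg\om$ with $\Om=n(n-1)\om\w\homg^{n-2}$; and \eqref{eq:det_tr_ineq} at the end.

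\textbf{First gap: the sign of $F$ in the auxiliary function.} The gap lies in the step you flag as the main obstacle, and the concrete choice you make there has the wrong sign. With $u=\log\tr_\homg\om+F-A\vph$, the term $-\Dt_{\homg,v}F$ cancels outright, so the second equation of \eqref{eq:wcscK} never enters; that is already a warning sign. The remainder $\int e^{pu}(\tr_\homg\om)^{-1}\Dt_\Om F\,v(m_\homg)\homg^n$ must be integrated by parts against the closed form $\Om$. Since $\dd\log\tr_\homg\om=\dd u-\dd F+A\,\dd\vph$, this produces $-\int e^{pu}(\tr_\homg\om)^{-1}\|\dd F\|_\Om^2\,v(m_\homg)\homg^n$ on the wrong side. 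Neither $\|F\|_{L^\infty}$ nor $\int e^{pu}|\dd u|^2_\homg$ controls $\dd F$, so Young's inequality cannot close the estimate.

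The missing idea is to give $F$ a negative coefficient: $G=-(aF+b\vph)$ with $a>0$, $u=e^G\tr_\homg\om$, $H=e^Gu^{2p}$.
One keeps $-(a+1)\Dt_{\homg,v}F$, replaces it using the second equation, and absorbs $(a+1)A_0\tr_\homg\om$ by taking $b$ large. Writing $F=-(G+b\vph)/a$, the integration by parts now yields $\frac1a\int H\bigl[\|\dd G\|_\Om^2+2p\llangle\dd\log u,\dd G\rrangle_\Om\bigr]v(m_\homg)\homg^n$. After completing the square and using \eqref{eq:llrr_est}, this is $\ge-\frac{p^2}{a}\int u^{2p+1}|\dd\log u|^2_\homg\,v(m_\homg)\homg^n$. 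The identity $\int\Dt_{\homg,v}u^{2p+1}\,v(m_\homg)\homg^n=0$ produces the good term $2p\int u^{2p+1}|\dd\log u|^2_\homg\,v(m_\homg)\homg^n$. With $a=(p^2+1)/(2p)$ this term absorbs both the $-\frac{p^2}{a}$ loss and the extra $\frac1a$ loss below. This integration by parts, not Chern--Lu, is where the gradient term comes from.

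\textbf{Second gap: the surviving drift term.} Even with the right sign, a term $-\frac1a\int H\langle\dd\log v(m_\homg)\w\dc G,\om\rangle_\homg\,v(m_\homg)\homg^n$ survives, by \eqref{eq:llrr_trlr}. It is linear in $\dd F$ and is not dominated by $\|\dd G\|_\Om^2$. At a point where $\homg$ is the identity and $\om=\mathrm{diag}(\mu_k)$, one has $\langle\dd G\w\dc G,\om\rangle_\homg=2\sum_k\mu_k|\pl_kG|^2$, whereas $\|\dd G\|_\Om^2=2\sum_k(\tr_\homg\om-\mu_k)|\pl_kG|^2$.

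The paper writes $H\,\dd G=\dd H-2pH\,\dd\log u$ and treats the two pieces separately.
The $\dd\log u$ piece is handled by Cauchy--Schwarz for the positive pairing $\langle\cdot\w\dc\cdot,\om\rangle_\homg$, together with the bound $\langle\dd\log v(m_\homg)\w\dc\log v(m_\homg),\om\rangle_\homg\le C_v^2A_2$ from Claim~\ref{claim:dmdm_ddcm}. It costs $\frac1a\int u^{2p+1}|\dd\log u|^2_\homg$.
For the $\dd H$ piece one uses the identity $\langle\dd m_\homg^{\xi_i}\w\dc H,\om\rangle_\homg=\langle\dd m_\om^{\xi_i},\dd H\rangle_\homg$ and integrates by parts. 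Derivatives then land on $m_\om^{\xi_i}$, giving $\ddc m_\om^{\xi_i}=-\CL_{J\xi_i}\om$, and on the weight, giving $g_\om(\xi_i,\xi_j)$. Both are bounded, so this piece is $O\bigl(\int u^{2p+1}v(m_\homg)\homg^n\bigr)$ using $\tr_\homg\om\ge c_0$.

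Your proposal contains neither the sign choice nor this device, so as written it does not reach the $L^{2p+2}$ bound.
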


\subsection{Proof of Theorem~\ref{thm:main1}}\label{sec:pf_main1}
In the sequel, we shall use the following constants: 
\begin{itemize}
    \item $A_0 > 0$ such that $-A_0 \om \leq \Ric(\om) \leq A_0 \om$; 
    \item $A_1 > 0$ such that $-A_1 \om \leq \CL_{J\xi_i} \om \leq A_1 \om$ for all $\xi_i$; 
    \item $A_2 > 0$ such that $|g_\om(\xi_i,\xi_j)| = |\om(\xi_i,J\xi_j)| \leq A_2$ for all $\xi_i, \xi_j$;
    \item $B > 0$ such that $\Bisec(\om) \leq B$; 
    \item $C_v, C_w > 0$ such that 
    \[
        \max_{\Pol_\af} \left(|v| + \sum_i |D_i \log v| + \sum_{i,j} |D_{ij}^2 \log v|\right) \leq C_v
        \quad\text{and}\quad
        \max_{\Pol_\af} |w| \leq C_w;
    \]
    \item $C_0 > 0$ such that 
    \[
        \|\vph\|_{L^\infty} + \|F\|_{L^\infty} \leq C_0;
    \]
    \item $C_\Pol > 0$ such that $|\langle \xi_i, \eta \rangle_{\mathfrak{t},\mathfrak{t}^\vee}| \leq C_\Pol$ for all $\eta \in \Pol_\af \subset \mathfrak{t}^\vee$ and all $\xi_i$;
    \item $C_{\Ric} > 0$ such that $|m_{\Ric(\om)}^{\xi_i}| \leq C_{\Ric}$ for all $\xi_i$. 
\end{itemize}
We first make the following useful claim, which will also be helpful in studying weighted K\"ahler--Ricci solitons (see Section~\ref{sec:wKRS}). 

\begin{claim}\label{claim:dmdm_ddcm}
Let $\om$ and $\homg$ be two $\BT$-invariant K\"ahler metrics. 
We have
\[
    \langle \dd m_\homg^{\xi_i} \w \dc m_\homg^{\xi_j} , \om\rangle_\homg = g_\om (\xi_i,\xi_j)
    \quad\text{and}\quad 
    \langle \ddc m_\homg^{\xi_i}, \om \rangle_\homg 
    = \CL_{J\xi_i} \tr_\homg \om - \tr_\homg(\CL_{J\xi_i} \om).
\]
\end{claim}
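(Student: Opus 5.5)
The plan is to reduce both identities to two facts: each $\xi_i$ is Hamiltonian for $\homg$, and $J\xi_i$ is a real holomorphic vector field. The first identity then becomes linear algebra at a point, and the second becomes a Lie-derivative computation.

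\textbf{Step 1: identify $\dc m_\homg^{\xi_i}$.} With the normalization \eqref{norm:moment} and the convention $\dc = \ii(\db - \pl)$, the moment map satisfies $\dd m_\homg^{\xi} = -\iota_\xi \homg$, up to a sign fixed by that convention. For a real $1$-form $\theta$ we have $\dc$ acting as $-J^\ast$ on $\dd$, so $\dc f = -\dd f\circ J$ (again up to the same convention sign). Since $\homg$ is $J$-invariant, this gives
\[
    \dc m_\homg^{\xi} = \pm\, \iota_{J\xi}\homg .
\]
This is the only place where sign bookkeeping matters. I would check it once against $m_{\om_\vph} = m_\om + \dc\vph$ and $\ddc = 2\ii\ddb$, and fix the overall sign so that the first identity holds with $g_\om(\xi_i,\xi_j) = \om(\xi_i, J\xi_j)$.

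\textbf{Step 2: the first identity.} The pairing $\langle \af, \om\rangle_\homg$ of two real $(1,1)$-type $2$-forms contracts both slots with the inverse metric $\homg^{-1}$. The form $\dd m^{\xi_i}\w \dc m^{\xi_j}$ is $(\iota_{\xi_i}\homg)\w(\iota_{J\xi_j}\homg)$. Its $(2,0)+(0,2)$ part pairs trivially with $\om$, so only the $(1,1)$ part contributes. Raising an index of $\iota_X\homg$ with $\homg^{-1}$ returns $X$ itself, so the contraction collapses to $\om$ evaluated on the raised vectors:
\[
    \langle \dd m^{\xi_i}\w \dc m^{\xi_j}, \om\rangle_\homg = \om(\xi_i, J\xi_j) = g_\om(\xi_i,\xi_j).
\]
The normalizing factor $n$ in $\langle\cdot,\cdot\rangle_\homg$ cancels here. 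I would verify this in local holomorphic coordinates, writing $\xi^{1,0} = \xi^k \pl_k$ and $\pl m^{\xi} = c\, \homg_{k\bar l}\,\overline{\xi^{l}}\,\dd z^k$, and checking the constant $c$ against the normalization of $\dd f\w\dc g$.

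\textbf{Step 3: the second identity.} By Step 1 and Cartan's formula, using $\dd\homg = 0$,
\[
    \ddc m^{\xi}_\homg = \dd\,\iota_{J\xi}\homg = \CL_{J\xi}\homg .
\]
Because $\xi \in \mathfrak{t} \subset \mathrm{Lie}(\Aut_{\red}(X))$, the field $J\xi$ is real holomorphic, so $\CL_{J\xi}$ preserves $J$ and maps $(1,1)$-forms to $(1,1)$-forms. Write $\tr_\homg\om$ as the full contraction of $\om$ with $\homg^{-1}$ and apply the derivation $\CL_{J\xi}$. The Leibniz rule together with $\CL(\homg^{-1}) = -\homg^{-1}(\CL\homg)\homg^{-1}$ gives
\[
    \CL_{J\xi}\tr_\homg\om = \tr_\homg(\CL_{J\xi}\om) - \langle \CL_{J\xi}\homg, \om\rangle_\homg .
\]
Substituting $\CL_{J\xi}\homg = \ddc m_\homg^{\xi}$ and rearranging yields the claim, up to the sign convention fixed in Step 1. $\BT$-invariance of $\om$ and $\homg$ is not needed for this step, only holomorphicity of $J\xi$.

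\textbf{Main obstacle.} The only real difficulty is consistent sign and factor tracking between $\dc$, the moment map normalization, and the definition of $\langle\cdot,\cdot\rangle$ via $n\,\dd f\w\dc g\w\om^{n-1}/\om^n$. To settle this I would test both formulas on flat $\BC$ with the rotation field and $\homg = \om$ the Euclidean metric. Once the single sign in Step 1 is fixed there, the rest is formal.
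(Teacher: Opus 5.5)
Your proposal is correct and is essentially the paper's argument: the first identity is pointwise linear algebra from $\dd m_\homg^{\xi}=-\homg(\xi,\cdot)$, and the second is Cartan's formula followed by a Leibniz computation of $\CL_{J\xi}\tr_\homg\om$. The only difference is bookkeeping: the paper differentiates $n\,\om\w\homg^{n-1}/\homg^n$ and then applies Lemma~\ref{lem:tensor_eq_Sze}, whereas you differentiate $\homg^{-1}$ directly.

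One correction on the sign: it is $\ddc m_\homg^{\xi}=-\CL_{J\xi}\homg$, not $+$ as in your Step~3, and it is precisely this minus sign that gives the claim as stated. The sign is forced by the normalization $m_{\om_\vph}=m_\om+\dc\vph$ (or by your flat test), not by the first identity, which is insensitive to it.
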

\begin{proof}
The first equality comes directly from the definition that $\dd m_\homg^{\xi_i} = - \homg(\xi_i,\cdot)$.
To show the second one, we first note that, by Cartan's formula, we have  
\[
    \ddc m_\homg^{\xi_i} = - \dd \homg(J\xi_i,\cdot) = - \CL_{J\xi_i} \homg.
\]
By Lemma~\ref{lem:tensor_eq_Sze}, we have 
\begin{align*}
    \langle \ddc m_\homg^{\xi_i}, \om \rangle_\homg 
    &= (\Dt_\homg m_\homg^{\xi_i})(\tr_\homg \om) - n(n-1) \frac{\ddc m_\homg^{\xi_i} \w \om \w \homg^{n-2}}{\homg^n} \\
    &= (\Dt_\homg m_\homg^{\xi_i})(\tr_\homg \om) + n(n-1) \frac{(\CL_{J\xi_i}\homg) \w \om \w \homg^{n-2}}{\homg^n}.
\end{align*}
Note that
{\small
\begin{align*}
    &\CL_{J\xi_i} \tr_\homg \om 
    =n \CL_{J\xi_i} \lt(\frac{\om \w \homg^{n-1}}{\homg^n}\rt)\\
    &\qquad= n\frac{(\CL_{J\xi_i} \om) \w \homg^{n-1}}{\homg^n} + n(n-1) \frac{(\CL_{J\xi_i} \homg) \w \om \w \homg^{n-2}}{\homg^n} 
    - n^2 \frac{\om \w \homg^{n-1}}{\homg^n} \frac{(\CL_{J\xi_i} \homg) \w \homg^{n-1}}{\homg^n}\\
    &\qquad= n\frac{(\CL_{J\xi_i} \om) \w \homg^{n-1}}{\homg^n} + n(n-1) \frac{(\CL_{J\xi_i} \homg) \w \om \w \homg^{n-2}}{\homg^n} 
    + (\tr_\homg \om) (\Dt_\homg m_\homg^{\xi_i}).
\end{align*}
}%
Hence, 
\begin{equation}
    \langle \ddc m_\homg^{\xi_i}, \om \rangle_\homg
    = \CL_{J\xi_i} \tr_\homg \om - \tr_\homg(\CL_{J\xi_i} \om), 
\end{equation}
and we complete the proof.
\end{proof}

\subsubsection{Weighted Laplacian of $\tr_\homg \om$}\label{sec:wLap_tr}
By the Chern--Lu inequality (Lemma~\ref{lem:Chern-Lu}) and direct computations, we obtain the following estimates: 
\begin{equation}\label{eq:ess1}
\begin{split}
    \Dt_{\homg,v} \log \tr_\homg \om 
    &= \Dt_\homg \log \tr_\homg \om 
    + \langle \dd \log v(m_\homg), \dd \log \tr_\homg \om \rangle_\homg \\
    &\geq \frac{\langle \Ric(\homg), \om\rangle_\homg}{\tr_\homg \om} - B \tr_\homg \om 
    + \langle \dd \log v(m_\homg), \dd \log \tr_\homg \om \rangle_\homg \\
    &= \frac{\langle \Ric(\om), \om\rangle_\homg}{\tr_\homg \om} - \frac{\langle \ddc F, \om\rangle_\homg}{\tr_\homg \om} + \frac{\langle \ddc \log v(m_\homg), \om\rangle_\homg}{\tr_\homg \om} - B \tr_\homg \om \\
    &\qquad + \langle \dd \log v(m_\homg), \dd \log \tr_\homg \om \rangle_\homg \\
    &\geq - (A_0+B) \tr_\homg \om - \frac{\langle \ddc F, \om \rangle_\homg}{\tr_\homg \om} \\
    &\qquad + \frac{1}{\tr_\homg \om} \left( \langle \ddc \log v(m_\homg), \om \rangle_\homg + \langle \dd \log v(m_\homg), \dd \tr_\homg \om \rangle_\homg\right).
\end{split}
\end{equation}
The third line comes from $\Ric(\homg) = \Ric(\om) - \ddc F + \ddc \log v(m_\homg)$ and the last inequality uses $\Ric(\om) \geq -A_0 \om$. 
Note that 
\[
    \ddc \log v(m_\homg) 
    = \sum_{i,j} (D^2_{ij} \log v)(m_\homg) \dd m_\homg^{\xi_i} \w \dc m_\homg^{\xi_j} 
    + \sum_i (D_i \log v)(m_\homg) \ddc m_\homg^{\xi_i}.
\]
Applying Claim~\ref{claim:dmdm_ddcm}, we infer 
\begin{equation}\label{eq:wKRS1}
\begin{split}
    \langle \ddc \log v(m_\homg), \om \rangle_\homg 
    &= \sum_{i,j} (D_{ij}^2 \log v)(m_\homg) g_\om(\xi_i,\xi_j)\\
    &\qquad + \sum_i (D_i \log v)(m_\homg) (\CL_{J\xi_i} \tr_\homg \om - \tr_\homg(\CL_{J\xi_i} \om)).
\end{split}
\end{equation}
We also have
\begin{align*}
    \langle \dd \log v(m_\homg), \dd \tr_\homg \om \rangle_\homg 
    = - \sum_i (D_i \log v)(m_\homg) (\CL_{J \xi_i} \tr_\homg \om).
\end{align*}
Thus, we derive the following estimates for the last two terms in \eqref{eq:ess1}: 
\begin{equation}\label{eq:ess2}
\begin{split}
    &\langle \ddc \log v(m_\homg), \om \rangle_\homg + \langle \dd \log v(m_\homg), \dd \tr_\homg \om \rangle_\homg \\
    &\qquad = \sum_{i,j} (D_{ij}^2 \log v)(m_\homg) g_\om(\xi_i,\xi_j) - \sum_i (D_i \log v)(m_\homg)  \tr_\homg (\CL_{J \xi_i} \om)\\
    &\qquad \geq - C_v A_2 - C_v A_1 \tr_\homg \om. 
\end{split}
\end{equation}
Putting \eqref{eq:ess1} and \eqref{eq:ess2} together, we get  
\begin{equation}\label{eq:ess2.5}
    \Dt_{\homg,v} \log \tr_\homg \om 
    \geq - (A_0+B) \tr_\homg \om - \frac{\langle \ddc F, \om \rangle_\homg}{\tr_\homg \om} - C_v A_1 - C_v A_2 \frac{1}{\tr_\homg \om}. 
\end{equation}

\subsubsection{Auxiliary functions}\label{sec:auxiliary}
Following \cite[Proof of Proposition~5.6]{Pan_To_Trusiani_2023}, we now introduce the auxiliary functions $G$ and $u$ that help to establish the estimates, as well as an auxiliary function $H$. 
We define
\[
    G = -(aF + b\vph), \quad
    u = e^G \tr_\homg \om, \quad\text{and}\quad
    H = e^G u^{2p} = \frac{u^{2p+1}}{\tr_\homg \om}.
\] 
We first note that, by the AM-GM inequality, 
\[
    \tr_\homg \om \geq n \sqrt[n]{\frac{\om^n}{\homg^n}} = n \sqrt[n]{\frac{v(m_\homg)}{e^F}} \geq c_0, 
\]
where $c_0 = c_0(\min_{\Pol_{[\om]}}v, C_0) > 0$ is a uniform constant. 
Recall that from \eqref{eq:ess2.5}, the above estimate, and Lemma~\ref{lem:tensor_eq_Sze}, we have 
\begin{align*}
    \Dt_{\homg,v} \log \tr_\homg \om 
    &\geq - \frac{\langle \ddc F, \om \rangle_\homg}{\tr_\homg \om} - (A_0+B) \tr_\homg \om  - C_v A_1 - C_v A_2 \frac{1}{\tr_\homg \om} \\
    &\geq - \frac{\langle \ddc F, \om \rangle_\homg}{\tr_\homg \om} - C_1 (1 + \tr_\homg \om) \\
    &= - \Dt_\homg F + \frac{\Dt_\Om F}{\tr_\homg \om} - C_1(1 + \tr_\homg \om) \\
    &= - \Dt_{\homg, v} F + \langle \dd \log v(m_\homg), \dd F\rangle_\homg + \frac{\Dt_\Om F}{\tr_\homg \om} - C_1(1 + \tr_\homg \om),
\end{align*}
where $C_1 = C_1(A_0, A_1, A_2, B, C_v, c_0) > 0$ is a uniform constant and $\Dt_\Om$ is a linear second order elliptic operator defined by 
\[
    \Dt_\Om f := \frac{\ddc f \w \Om}{\homg^n}
    \quad\text{and}\quad
    \Om := n(n-1) \om \w \homg^{n-2}.
\]
Then we derive that 
\begin{equation}\label{eq:wLap_logu_1}
{\small
\begin{split}
    &\Dt_{\homg, v} \log u 
    = -\Dt_{\homg, v} (aF + b\vph) + \Dt_{\homg,v} \log \tr_\homg \om\\
    &\qquad\geq -(a+1) \Dt_{\homg,v} F - b \tr_{\homg,v} (\homg -\om) + \langle\dd \log v(m_\homg), \dd F \rangle_\homg + \frac{\Dt_\Om F}{\tr_\homg \om} - C_1(1 + \tr_\homg \om)\\
    &\qquad= (a+1) (w(m_\homg) - \tr_{\homg,v}\Ric(\om)) - b \tr_{\homg,v} (\homg -\om) \\
    &\qquad\qquad + \langle\dd \log v(m_\homg), \dd F \rangle_\homg + \frac{\Dt_\Om F}{\tr_\homg \om} - C_1(1 + \tr_\homg \om).
\end{split}}%
\end{equation}
Note that 
\begin{equation}\label{eq:wtr_homg_om}
\begin{split}
    \tr_{\homg,v} (\homg - \om) 
    &= \tr_\homg(\homg - \om) + \sum_i (D_i \log v)(m_\homg)(m_\homg^{\xi_i} - m_\om^{\xi_i}) \\
    &\leq n - \tr_\homg \om + 2 C_v C_\Pol. 
\end{split}
\end{equation}
Putting \eqref{eq:wLap_logu_1} and \eqref{eq:wtr_homg_om} together, we gain 
\begin{equation}\label{eq:wLap_logu}
    \Dt_{\homg,v} \log u 
    \geq (b - (a+1)A_0 - C_1) \tr_\homg \om - C_{2,a,b} + \frac{\Dt_\Om F}{\tr_\homg \om} + \langle \dd \log v(m_\homg), \dd F\rangle_\homg, 
\end{equation}
where $C_{2,a,b}$ depends on $a$, $b$, $C_w$, $C_{\Ric}$, $C_v$, $C_\Pol$, $C_1$ and $n$. 

\smallskip
By direct computations, we have 
\begin{align*}
    \frac{1}{2p+1} \Dt_{\homg,v} u^{2p+1} 
    &= u^{2p} \Dt_{\homg,v} u + 2pu^{2p-1} |\dd u|_\homg^2 \\
    &= u^{2p} \Dt_{\homg,v} u + 2p u^{2p+1} |\dd \log u|_\homg^2
\end{align*}
and 
\[
    \Dt_{\homg,v} u \geq u \Dt_{\homg,v} \log u.
\]
Thus, using Lemma~\ref{lem:IPP_wLap}, we derive that 
\begin{equation}\label{eq:IPP1}
\begin{split}
    0 &= \int_X \frac{1}{2p+1} \Dt_{\homg,v} u^{2p+1} v(m_\homg) \homg^n \\
    &\geq 2p \int_X u^{2p+1} |\dd \log u|_\homg^2 v(m_\homg) \homg^n 
    + \int_X u^{2p+1} (\Dt_{\homg,v} \log u) v(m_\homg) \homg^n
\end{split}
\end{equation}
Combining \eqref{eq:wLap_logu} and \eqref{eq:IPP1}, we obtain that 
\begin{equation}\label{eq:IPP2}
\begin{split}
    0 &\geq 2p \int_X u^{2p+1} |\dd \log u|_\homg^2 v(m_\homg) \homg^n \\
    &\qquad + (b - (a+1) A_0 - C_1) \int_X u^{2p+1} (\tr_\homg \om) v(m_\homg) \homg^n\\
    &\qquad - C_{2,a,b} \int_X u^{2p+1} v(m_\homg) \homg^n\\
    &\qquad + \underbrace{\int_X H \left[\Dt_\Om F + \tr_\homg\om \langle \dd \log v(m_\homg), \dd F\rangle_\homg \right] v(m_\homg) \homg^n}_{:= \CI}.    
\end{split}
\end{equation}

\subsubsection{Analyze the term $\CI$.}\label{eq:analyze_I}
Since $F = -\frac{1}{a}(G+b\vph)$, we have 
\begin{equation}\label{eq:mix1}
\begin{split}
    \CI &= \underbrace{- \frac{1}{a} \int_X H \left[\Dt_\Om G + \tr_\homg \om \langle \dd \log v(m_\homg), \dd G\rangle_\homg\right] v(m_\homg) \homg^n}_{=: I_1}\\
    &\qquad \underbrace{- \frac{b}{a} \int_X H \left[\Dt_\Om \vph + \tr_\homg \om \langle \dd \log v(m_\homg), \dd \vph\rangle_\homg\right] v(m_\homg) \homg^n}_{=: I_2}. 
\end{split}
\end{equation}
We shall analyze $I_1$ and $I_2$ separately. 

\medskip
\noindent{\bf Analyzing $I_2$:}
We first study the term $I_2$. 
Note that 
\begin{equation}\label{eq:P_est}
\begin{split}
    \Dt_\Om \vph
    &= n(n-1) \frac{(\homg - \om) \w \om \w \homg^{n-2}}{\homg^n} \\
    &\leq n(n-1) \frac{\om \w \homg^{n-1}}{\homg^n} 
    = (n-1) \tr_\homg \om. 
\end{split}
\end{equation}
On the other hand, 
\begingroup
\allowdisplaybreaks
\begin{align*}
    \langle \dd \log v(m_\homg),\dd \vph\rangle_\homg 
    &= \sum_i (D_i \log v)(m_\homg) \langle \dd m_\homg^{\xi_i}, \dd \vph\rangle_\homg \\
    &= \sum_i (D_i \log v)(m_\homg) (-\CL_{J \xi_i} \vph) \\
    &= \sum_i (D_i \log v)(m_\homg) (m_\homg^{\xi_i} - m_\om^{\xi_i}).
\end{align*}
\endgroup
Since $\im(m_\homg) = \Pol_{\af} = \im(m_\om)$ by our normalization, $m_\homg^{\xi_i}(X)$ and $m_\om^{\xi_i}(X)$ are uniformly bounded, and thus, 
\begin{equation}\label{eq:dlogv_dphi}
    |\langle \dd \log v(m_\homg), \dd \vph\rangle_\homg| \leq 2 C_v C_\Pol
\end{equation}
Putting $I_2$, \eqref{eq:P_est}, and \eqref{eq:dlogv_dphi} together, we obtain that  
\begingroup
\allowdisplaybreaks
\begin{equation}\label{eq:I2_est}
\begin{split}
    I_2 &\geq -\frac{b (n-1 + 2C_v C_\Pol)}{a} \int_X H(\tr_\homg \om) v(m_\homg) \homg^n \\
    &= -\frac{b C_3}{a} \int_X u^{2p+1} v(m_\homg) \homg^n, 
\end{split}
\end{equation}
where $C_3 = C_3(n, C_v, C_\Pol) > 0$ is a uniform constant. 
\endgroup

\medskip
\noindent{\bf Analyzing $I_1$.}
We now focus on estimating $I_1$. 
As $\dd \Om = 0$, by Stokes' theorem,
\begingroup
\allowdisplaybreaks
\begin{equation}\label{eq:mix2}
\begin{split}
    &\int_X H (\Dt_\Om G) v(m_\homg) \homg^n 
    = \int_X H e^{\log v(m_\homg)} \ddc G \w \Om \\
    &\qquad= - \int_X v(m_\homg) \dd H \w \dc G \w \Om 
    - \int_X H e^{\log v(m_\homg)} \dd \log v(m_\homg) \w \dc G \w \Om\\
    &\qquad= -\int_X \llangle \dd H, \dd G\rrangle_\Om v(m_\homg)\homg^n 
    - \int_X H \llangle \dd \log v(m_\homg), \dd G \rrangle_\Om v(m_\homg) \homg^n,
\end{split}
\end{equation}
\endgroup
where 
\[
    \llangle\dd f, \dd g\rrangle_\Om 
    := \frac{\dd f \w \dc g \w \Om}{\homg^n}
    \quad\text{and}\quad
    \|\dd f\|_\Om^2 := \llangle \dd f, \dd f\rrangle_\Om.
\]
Substituting \eqref{eq:mix2} into \eqref{eq:mix1}, we obtain that
\begin{align*}
    I_1 &= \frac{1}{a} \int_X \llangle \dd H, \dd G \rrangle_\Om v(m_\homg) \homg^n \\ 
    &\qquad + \frac{1}{a} \int_X H \left[\llangle \dd \log v(m_\homg), \dd G\rrangle_\Om - \tr_\homg \om \langle \dd \log v(m_\homg), \dd G\rangle_\homg\right] v(m_\homg) \homg^n.
\end{align*}
Recall that $H = e^G u^{2p}$. 
We have 
\begin{equation}\label{eq:dH_dG}
    \dd H = e^G u^{2p} \dd G + 2p e^G u^{2p-1} \dd u 
    = H \dd G + 2p H \dd \log u,
\end{equation}
and thus, 
\[
    \llangle \dd H, \dd G\rrangle_\Om 
    = H \llangle \dd G, \dd G\rrangle_\Om + 2p H \llangle \dd \log u, \dd G\rrangle_\Om.  
\]
We then get
\begin{equation}\label{eq:mix3}
\begin{split}
    I_1 
    &= \frac{1}{a} \int_X H \left[\| \dd G\|_\Om^2 + 2p \llangle \dd \log u, \dd G\rrangle_\Om\right] v(m_\homg) \homg^n\\
    &\qquad + \frac{1}{a} \int_X H \left[ \llangle \dd \log v(m_\homg), \dd G\rrangle_\Om - \tr_\homg \om \langle \dd \log v(m_\homg), \dd G\rangle_\homg \right]  v(m_\homg) \homg^n.
\end{split}
\end{equation}
By the definition of $\llangle \cdot, \cdot \rrangle_\Om$ and Lemma~\ref{lem:tensor_eq_Sze}, 
\begingroup
\allowdisplaybreaks
\begin{equation}\label{eq:llrr_trlr}
\begin{split}
    &\llangle \dd f, \dd g\rrangle_\Om - \tr_\homg \om \langle \dd f, \dd g\rangle_\homg \\
    &\qquad = n(n-1) \frac{\dd f \w \dc g \w \om \w \homg^{n-2}}{\homg^n} - (\tr_\homg \dd f \w \dc g) (\tr_\homg \om) \\
    &\qquad = - \langle \dd f \w \dc g, \om \rangle_\homg
\end{split}
\end{equation}
\endgroup
and one can also infer the following estimate from \eqref{eq:llrr_trlr}
\begin{equation}\label{eq:llrr_est}
    \| \dd f\|_\Om^2 
    \leq (\tr_\homg \om) |\dd f|_\homg^2
    \quad\text{and}\quad
    \langle\dd f \w \dc f, \om \rangle_\homg 
    \leq (\tr_\homg \om) |\dd f|_\homg^2. 
\end{equation} 
This turns \eqref{eq:mix3} to the following
\begin{equation}\label{eq:mix4}
\begin{split}
    I_1 
    &= \frac{1}{a} \int_X H \left[\|\dd G\|_\Om^2 + 2p \llangle \dd \log u, \dd G\rrangle_\Om \right] v(m_\homg) \homg^n\\
    &\qquad - \frac{1}{a} \int_X H \langle \dd \log v(m_\homg) \w \dc G, \om\rangle_\homg v(m_\homg) \homg^n.
\end{split}
\end{equation}

\smallskip
We shall first deal with the first term in \eqref{eq:mix4}. 
Note that 
\begin{align*}
    \| \dd G\|_\Om^2 + 2p \llangle \dd \log u, \dd G\rrangle_\Om
    &= \|\dd (G + p \log u)\|_\Om^2 - p^2 \|\dd \log u\|_\Om^2\\
    &\geq - p^2 (\tr_\homg \om) |\dd \log u|_\homg^2,
\end{align*}
where the second inequality follows from \eqref{eq:llrr_est}. 
Thus, as $H (\tr_\homg \om) = u^{2p+1}$, 
\begin{equation}\label{eq:mix5}
    I_1 
    \geq - \frac{p^2}{a} \int_X u^{2p+1} |\dd \log u|_\homg^2 v(m_\homg) \homg^n
    - \frac{1}{a} \int_X H \langle \dd \log v(m_\homg) \w \dc G, \om\rangle_\homg v(m_\homg) \homg^n. 
\end{equation}
We now concentrate on the second term in \eqref{eq:mix5}. 
By \eqref{eq:dH_dG}, 
\begingroup
\allowdisplaybreaks
\begin{equation}\label{eq:mix6}
\begin{split}
    I_1 
    &\geq - \frac{p^2}{a} \int_X u^{2p+1} |\dd \log u|_\homg^2 v(m_\homg) \homg^n \\
    &\qquad - \frac{1}{a} \int_X \langle \dd \log v(m_\homg) \w \dc H, \om\rangle_\homg v(m_\homg) \homg^n \\
    &\qquad + \frac{2p}{a} \int_X H \langle \dd\log v(m_\homg) \w \dc \log u, \om \rangle_\homg v(m_\homg) \homg^n.  
\end{split}
\end{equation}
\endgroup
As $\dd m_\om^{\xi_i} = -\om(\xi_i,\cdot) = -g_\om(J\xi_i,\cdot)$, we have $\nabla_\om m_\om^{\xi_i} = -J\xi_i$ and then a pointwise computation yields the following equalities 
\begin{equation}\label{eq:dmw_dm}
\begin{split}
    \langle\dd m_\homg^{\xi_i} \w \dc f, \om \rangle_\homg 
    &= g_\om(\nabla_\homg m_\homg^{\xi_i}, \nabla_\homg f) 
    = g_\om(-J\xi_i, \nabla_\homg f)\\ 
    &= g_\om(\nabla_\om m_\om^{\xi_i}, \nabla_\homg f)
    = \dd m_\om^{\xi_i} (\nabla_\homg f)
    = \langle \dd m_\om^{\xi_i}, \dd f\rangle_\homg
\end{split}
\end{equation}
and thus, 
\[
    \langle \dd m_\homg^{\xi_i} \w \dc H, \om \rangle_\homg 
    = \langle \dd m_\om^{\xi_i}, \dd H \rangle_\homg. 
\]
By the above equality and Stokes' theorem, we then obtain  
\begingroup
\allowdisplaybreaks
\begin{align*}
    &- \int_X \langle \dd \log v(m_\homg) \w \dc H, \om\rangle_\homg v(m_\homg) \homg^n\\ 
    &\qquad=- \sum_i \int_X (D_i \log v)(m_\homg) \langle \dd m_\homg^{\xi_i} \w \dc H, \om \rangle_\homg v(m_\homg) \homg^n\\
    &\qquad= - \sum_i \int_X (D_i \log v)(m_\homg) \langle \dd m_\om^{\xi_i}, \dd H\rangle_\homg v(m_\homg) \homg^n \\
    &\qquad= - n \sum_i \int_X (D_i \log v)(m_\homg) e^{\log v(m_\homg)} \dd H \w \dc m_\om^{\xi_i} \w \homg^{n-1} \\
    &\qquad= n \sum_i \int_X H (D_i \log v)(m_\homg) e^{\log v(m_\homg)} \ddc m_\om^{\xi_i} \w \homg^{n-1}\\ 
    &\qquad\qquad+ n \sum_{i,j} \int_X H \left[D^2_{ij} \log v + (D_i \log v) (D_j \log v)\right](m_\homg) e^{\log v(m_\homg)} \dd m_{\homg}^{\xi_i} \w \dc m_\om^{\xi_j} \w \homg^{n-1}. 
\end{align*}
\endgroup
Note that 
\[
    n \frac{\dd m_\homg^{\xi_i} \w \dc m_\om^{\xi_j} \w \homg^{n-1}}{\homg^n} 
    = g_\om(\xi_i,\xi_j)
    \quad\text{and}\quad
    \ddc m_\om^{\xi_i} = - \CL_{J\xi_i} \om.  
\]
Hence, 
\begingroup
\allowdisplaybreaks
\begin{equation}\label{eq:mix7}
\begin{split}
    &-\int_X \langle\dd \log v(m_\homg) \w \dc H, \om \rangle_\homg v(m_\homg) \homg^n \\
    &\qquad \geq - C_v A_1 \int_X H (\tr_\homg \om) v(m_\homg) \homg^n 
    - C_v^2 A_2 \int_X H v(m_\homg) \homg^n \\
    &\qquad \geq - \left(C_v A_1 + \frac{C_v^2 A_2}{c_0}\right) \int_X u^{2p+1} v(m_\homg) \homg^n.
\end{split}
\end{equation}
\endgroup
Combining \eqref{eq:mix6} and \eqref{eq:mix7}, we deduce that 
\begin{equation}\label{eq:mix8}
\begin{split}
    I_1 
    &\geq - \frac{p^2}{a} \int_X u^{2p+1} |\dd \log u|_\homg^2 v(m_\homg) \homg^n \\
    &\qquad - \frac{1}{a} C_4 \int_X u^{2p+1} v(m_\homg) \homg^n 
    + \frac{2p}{a} \int_X H \langle \dd\log v(m_\homg) \w \dc \log u, \om \rangle_\homg v(m_\homg) \homg^n,
\end{split}
\end{equation}
where $C_4 = C_4(A_1, A_2, C_v, c_0) > 0$ is a uniform constant. 
We remark that the pairing $\langle \dd f \w \dc g,\om \rangle_\homg$ is a positive definite non-degenerate bilinear pairing. 
Also, using Claim~\ref{claim:dmdm_ddcm}, 
\begingroup
\allowdisplaybreaks
\begin{equation}\label{eq:mix9}
\begin{split}
    &\langle \dd \log v(m_\homg) \w \dc \log v(m_\homg),\om \rangle_\homg \\
    &\qquad= \left|\sum_{i,j} [D_i \log v][D_j \log v](m_\homg) \langle \dd m_\homg^{\xi_i} \w \dc m_\homg^{\xi_j}, \om \rangle_\homg \right|\\
    &\qquad= \left|\sum_{i,j} [D_i \log v][D_j \log v](m_\homg) g_\om(\xi_i,\xi_j)\right|
    \leq C_v^2 A_2.
\end{split}
\end{equation}
\endgroup
Therefore, by Cauchy--Schwarz inequality, \eqref{eq:mix9}, and \eqref{eq:llrr_est}, we have 
\begingroup
\allowdisplaybreaks
\begin{align*}
    &2p |\langle\dd \log v(m_\homg) \w \dc \log u, \om\rangle_\homg| \\ 
    &\qquad \leq 2p \sqrt{\langle\dd \log v(m_\homg) \w \dc \log v(m_\homg), \om\rangle_\homg \langle\dd \log u \w \dc \log u, \om\rangle_\homg}\\
    &\qquad\leq 2p \sqrt{C_v^2 A_2 (\tr_\homg \om) |\dd \log u|_\homg^2} 
    \leq \tr_\homg \om |\dd \log u|_\homg^2 + C_v^2 A_2 p^2.
\end{align*}
\endgroup
The last term in \eqref{eq:mix8} enjoys the following estimate 
\begin{equation}\label{eq:mix10}
\begin{split}
    &2p \int_X H \langle\dd \log v(m_\homg) \w \dc \log u, \om \rangle_\homg v(m_\homg) \homg^n \\ 
    &\qquad\geq - \int_X u^{2p+1} |\dd \log u|_\homg^2 v(m_\homg) \homg^n - \frac{C_v^2 A_2}{c_0} p^2 \int_X u^{2p+1} v(m_\homg) \homg^n; 
\end{split}
\end{equation}
hence, 
\begin{equation}\label{eq:I1_est}
    I_1 \geq - \frac{p^2+1}{a} \int_X u^{2p+1} |\dd \log u|_\homg^2 v(m_\homg) \homg^n - C_{5,a,p} \int_X u^{2p+1} v(m_\homg) \homg^n,
\end{equation}
where $C_{5,a,p} > 0$ depends only on $C_4$, $a$, $p$, $C_v$, $A_2$ and $c_0$. 

\subsubsection{Conclusion} 
Putting \eqref{eq:IPP2}, \eqref{eq:mix1}, \eqref{eq:I2_est}, and \eqref{eq:I1_est} together, we end up with the following estimate 
\begin{align*}
    0 &\geq \left(2p - \frac{p^2+1}{a}\right) \int_X u^{2p+1} |\dd \log u|_\homg^2 v(m_\homg) \homg^n \\
    &\qquad+ (b - (a+1) A_0 - C_1) \int_X u^{2p+1} (\tr_\homg \om) v(m_\homg) \homg^n \\
    &\qquad- C_{6,a,b,p} \int_X u^{2p+1} v(m_\homg) \homg^n,
\end{align*}
where $C_{6,a,b,p}$ depends only on $a$, $b$, $C_3$, and $C_{5,a,p}$. 
Choosing $a = \frac{p^2+1}{2p}$ and $b \geq (a + 1)A_0 + C_1 + 1$ and applying H\"older inequality, we get that 
\begin{align*}
    &\int_X (\tr_\homg \om)^{2p+2} v(m_\homg) \homg^n 
    \leq C_p \int_X (\tr_\homg \om)^{2p+1} v(m_\homg) \homg^n \\
    &\qquad\leq C_p \left(\int_X (\tr_\homg \om)^{2p+2} v(m_\homg) \homg^n\right)^{\frac{2p+1}{2p+2}} \left(\int_X v(m_\homg) \homg^n\right)^{\frac{1}{2p+2}};
\end{align*}  
here $C_p>0$ depends only on $p$, $C_{6,a,b,p}$ and $C_0$. 
Hence, we end up with 
\[
    \int_X (\tr_\homg \om)^{2p+2} e^F \om^n 
    \leq C_p^{2p+2} \int_X e^F \om^n 
\]
and this gives the first estimate in Theorem~\ref{thm:main1}. 
The second estimate in Theorem~\ref{thm:main1} simply comes from the fact that 
\begin{equation}\label{eq:det_tr_ineq}
    \tr_\om \homg \leq \frac{\homg^n}{\om^n} (\tr_\homg \om)^{n-1}. 
\end{equation}
Therefore, we obtain 
\begin{align*}
    \int_X (\tr_\om \homg)^{\frac{2p+2}{n-1}} \om^n
    &\leq \int_X \left(\frac{e^F}{v(m_\homg)}\right)^{\frac{2p+2}{n-1}} (\tr_\homg \om)^{2p+2} \om^n \\
    &\leq \left(\frac{e^{C_0}}{\min_{\Pol_\af}v}\right)^{\frac{2p+2}{n-1}} \int_X (\tr_\homg\om)^{2p+2} \om^n.
\end{align*}
This concludes the proof of Theorem~\ref{thm:main1}.

\subsection{Integral estimates for the Laplacian along the continuity path}
To show that the coercivity of the weighted Mabuchi functional implies the existence of weighted cscK metrics, Theorem~\ref{thm:main1} is not sufficient. 
As in \cite{DiNezza_Jubert_Lahdili_2024_b}, one requires to consider the following twisted weighted cscK equation: 
\begin{equation}\tag{twcscK}\label{eq:twcscK}
\begin{cases}
    v(m_{\om_\vph}) \om_\vph^n = e^F \om^n, \\
    \Dt_{\om_\vph,v} (F + f) = -w(m_{\om_\vph}) + \tr_{\om_\vph,v} (\Ric(\om) - \beta),
\end{cases}
\end{equation}
where $\bt$ is a $\BT$-invariant closed $(1,1)$-form and $f$ is a $\T$-invariant smooth function such that $\bt + \ddc f \geq 0$. 
Note that here we fix a moment map $m_\bt$ for $\bt$ and consider the normalization $m_{\beta + \ddc f} = m_\beta + \dc f$. 
As before, we denote $\homg = \om_\vph$.

Twisted weighted cscK equations arise naturally in the continuity method of \cite{DiNezza_Jubert_Lahdili_2024_b}. More precisely, the presence of the complexified torus $\T^{\mathbb C}$ in the coercivity condition requires one to work modulo its action along the continuity path, which  introduces an additional twisting term in the weighted cscK equation.

\smallskip
The $\log$-concavity of $v$ was used only in the proof of \cite[Theorem~2.6]{DiNezza_Jubert_Lahdili_2024_b} to obtain integral estimates for the Laplacian of $\vph$. 
Hence, we concentrate mainly on proving this part without assuming $v$ is $\log$-concave.

\begin{thm}\label{thm:main2}
Let $(\vph, F)$ be a pair solving \eqref{eq:twcscK}. 
For any $k \geq 1$, there exists a constant $C_k > 0$ such that 
\[
    \int_X e^{(k-1)f} (\tr_\om \homg)^k \om^n \leq C_k, 
\]
where $C_k$ depends only on $k$, $\om$, $\beta$, $\|\vph\|_{L^\infty}$, $\|F+f\|_{L^\infty}$, 
$\displaystyle \max_{\Pol_\af} \left(|v| + \sum_i |D_i \log v| + \sum_{i,j} |D_{ij}^2 \log v|\right)$, 
$\displaystyle \min_{\Pol_\af} v$, 
$\displaystyle \max_{\Pol_\af} |w|$, 
$\displaystyle \max_i \max_{\eta \in \Pol_\af}|\langle\xi_i,\eta\rangle_{\mathfrak{t},\mathfrak{t}^\vee}|$, and 
$\displaystyle \max_i \max_{x \in X}|m_{\Ric(\om)}^{\xi_i}(x)|$. 
Moreover, if $\int_X e^{-p_0 f} \om^n \leq M$ for some $p_0 > 1$, then for any $q \in (1,p_0)$, 
\[
    \|\tr_\om \homg\|_{L^q(\om^n)} \leq C_q, 
\]
where $C_q > 0$ depends only on $q$, $p_0$, $M$, and the constants appearing above. 
\end{thm}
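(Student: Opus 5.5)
We adapt the proof of Theorem~\ref{thm:main1}, with the modifications in the spirit of \cite[Theorem~2.6]{DiNezza_Jubert_Lahdili_2024_b} (and \cite[Proposition~5.6]{Pan_To_Trusiani_2023}) to absorb the twisting term. Set $\tilde F := F + f$, so that $\|\tilde F\|_{L^\infty}$ is controlled. The second equation of \eqref{eq:twcscK} reads $\Dt_{\homg,v}\tilde F = -w(m_\homg) + \tr_{\homg,v}(\Ric(\om)-\bt)$. The Ricci identity becomes
\[
\Ric(\homg) = \Ric(\om) - \ddc \tilde F + \ddc f + \ddc\log v(m_\homg).
\]
Applying the Chern--Lu inequality (Lemma~\ref{lem:Chern-Lu}) and Claim~\ref{claim:dmdm_ddcm} exactly as in \eqref{eq:ess1}--\eqref{eq:ess2.5}, the $\log v$ terms are handled verbatim, with no concavity needed. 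The new term $\langle \ddc f,\om\rangle_\homg/\tr_\homg\om$ is treated by writing $\ddc f = (\bt+\ddc f) - \bt$. The nonnegative form $\bt + \ddc f$ contributes a term with a good sign, since $\langle\gamma,\om\rangle_\homg \ge 0$ for $\gamma\ge 0$. The form $\bt$ is bounded by $A_\bt\om$, contributing at most $A_\bt\tr_\homg\om$. We therefore obtain
\[
\Dt_{\homg,v}\log\tr_\homg\om \ge -\frac{\langle\ddc\tilde F,\om\rangle_\homg}{\tr_\homg\om} - C_1(1+\tr_\homg\om).
\]

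One difference from the untwisted case is the AM--GM lower bound $\tr_\homg\om \ge c_0$. This now only gives $\tr_\homg\om \ge c\, e^{-F/n} = c\, e^{(f-\tilde F)/n}$, and $f$ is not bounded. We will therefore avoid using $c_0$. In the places where $1/\tr_\homg\om$ appeared (the term $C_vA_2/\tr_\homg\om$ and those in \eqref{eq:mix7}, \eqref{eq:mix10}), we keep the integrands as $H$ rather than $u^{2p+1}$. In the final Hölder step, $\int H$ and $\int u^{2p+1}$ are then absorbed via Young's inequality into $\int u^{2p+1}\tr_\homg\om$ plus a lower order term $\int e^{(2p+2)G}v\homg^n$. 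The latter is bounded because $G = -(a\tilde F + b\vph)$ is bounded once we take $G$ built from $\tilde F$. With $G=-(a\tilde F+b\vph)$, $u=e^G\tr_\homg\om$ and $H=e^Gu^{2p}$, the term $\CI$ splits into $I_1$ and $I_2$ exactly as before. In $I_2$, \eqref{eq:P_est} and \eqref{eq:dlogv_dphi} are unchanged. In $I_1$, \eqref{eq:mix2}--\eqref{eq:I1_est} only use that $G$ is a smooth $\BT$-invariant function, so they also carry over. In \eqref{eq:wtr_homg_om}, the term $\tr_{\homg,v}\bt$ is bounded by $A_\bt\tr_\homg\om + C$, using the fixed moment map $m_\bt$.

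With $a=\frac{p^2+1}{2p}$ and $b$ large, this yields $\int_X(\tr_\homg\om)^{2p+2}v(m_\homg)\homg^n \le C_p$. Since $v(m_\homg)\homg^n = e^F\om^n = e^{\tilde F - f}\om^n$, this gives
\[
\int_X e^{-f}(\tr_\homg\om)^{2p+2}\om^n\le C_p.
\]
To convert this into the stated bound, we use \eqref{eq:det_tr_ineq}, $\tr_\om\homg \le (e^{F}/v)(\tr_\homg\om)^{n-1}$, which gives $(\tr_\om\homg)^k \le C e^{-kf}(\tr_\homg\om)^{k(n-1)}$. This has the wrong power of $e^{f}$ for the claimed weight $e^{(k-1)f}$. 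Instead, we plan to use the simpler inequality $\tr_\om\homg\le\tr_\homg\om\cdot(\text{eigenvalue bound})$, or rather run the same argument with the roles swapped. Concretely, $e^{(k-1)f}(\tr_\om\homg)^k\om^n$ should be compared with $(\tr_\om\homg)^k(\homg^n/\om^n)^{1-k}\om^n$ up to bounded factors. Then the pointwise inequality $\tr_\om\homg\cdot(\om^n/\homg^n)^{(k-1)/k}\lesssim(\tr_\homg\om)^{n-1}(\homg^n/\om^n)^{1/k}$, together with Hölder and the weighted $L^{2p+2}$ bound for a suitable $p=p(k,n)$, should close the first claim. Getting this exponent bookkeeping right is the main obstacle. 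If the direct comparison fails, the fallback is to apply the Aubin--Yau-type argument to $\tr_\om\homg$ with auxiliary weight $e^{(k-1)f}$ as in \cite{DiNezza_Jubert_Lahdili_2024_b}, still invoking Claim~\ref{claim:dmdm_ddcm} to control the $\log v$ Hessian terms.

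For the second claim, Hölder's inequality gives
\[
\int(\tr_\om\homg)^q\om^n \le \left(\int e^{(k-1)f}(\tr_\om\homg)^k\om^n\right)^{q/k}\left(\int e^{-\frac{q(k-1)}{k-q}f}\om^n\right)^{(k-q)/k}.
\]
We choose $k$ large so that $\frac{q(k-1)}{k-q}\le p_0$; this is possible since the left side tends to $q<p_0$ as $k\to\infty$. The last integral is then bounded by $M$ and the volume.
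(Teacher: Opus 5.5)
Up to the bound $\int_X(\tr_\homg\om)^{2p+2}v(m_\homg)\homg^n\le C_p$, your argument is the paper's. It uses the same $G=-(a(F+f)+b\vph)$, $u$ and $H$, and the same use of $\ddc f\ge -C_\bt\om$. It also makes the same observation: $\tr_\homg\om$ now has no uniform lower bound, so the $C_vA_2/\tr_\homg\om$ term and the terms from \eqref{eq:mix7}, \eqref{eq:mix10} must be kept as $\int_X H\,v(m_\homg)\homg^n$. Your first display already absorbs $C_vA_2/\tr_\homg\om$ into $C_1(1+\tr_\homg\om)$, which is only legitimate in the untwisted case, but you correct this in the next paragraph. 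Your Young's-inequality absorption is equivalent to the paper's H\"older step. Both use that $\int_X v(m_\homg)\homg^n=\int_X v(m_\om)\om^n$ is fixed, since $e^F$ itself is not controlled. The second claim is also fine: the paper takes exactly $k=\frac{q(p_0-1)}{p_0-q}$, and your larger $k$ works equally well.

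The problem is the step you call the main obstacle. You discard the correct argument because of a bookkeeping slip, so as written the first claim is left unproved. From $\tr_\om\homg\le\frac{\homg^n}{\om^n}(\tr_\homg\om)^{n-1}=\frac{e^{F}}{v(m_\homg)}(\tr_\homg\om)^{n-1}$ and $e^{F}=e^{\tilde F}e^{-f}$, you correctly obtained $(\tr_\om\homg)^k\le C e^{-kf}(\tr_\homg\om)^{k(n-1)}$. Multiplying by $e^{(k-1)f}$ gives
\[
e^{(k-1)f}(\tr_\om\homg)^k\,\om^n\le C\,e^{-f}(\tr_\homg\om)^{k(n-1)}\,\om^n = C\,e^{-\tilde F}(\tr_\homg\om)^{k(n-1)}\,v(m_\homg)\homg^n .
\]
So the power of $e^{f}$ is exactly the one in your own bound $\int_X e^{-f}(\tr_\homg\om)^{2p+2}\om^n\le C_p$. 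Choose $2p+2\ge k(n-1)$ and apply H\"older with respect to the finite measure $v(m_\homg)\homg^n$; this finishes the proof. It is precisely the paper's computation, which writes $e^{kF}e^{(k-1)f}\om^n=e^{(k-1)(F+f)}v(m_\homg)\homg^n$. Your ``concrete'' alternative with $(\homg^n/\om^n)^{1-k}$ is the same inequality in different notation: raise it to the power $k$ and integrate. The Aubin--Yau fallback is therefore unnecessary.
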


\begin{rmk}
This result removes the $\log$-concavity assumption in \cite[Theorem~2.6 and Corollary~2.7]{DiNezza_Jubert_Lahdili_2024_b}. 
We remark that the integral estimate for the gradient of $F+f$ in \cite[Theorem~2.1]{DiNezza_Jubert_Lahdili_2024_b} follows from \cite[Theorem~2.6]{DiNezza_Jubert_Lahdili_2024_b}, whose proof is the only step in which $\log$-concavity enters the game. 
As Theorem~\ref{thm:main2} removes the $\log$-concavity in \cite[Theorem~2.6]{DiNezza_Jubert_Lahdili_2024_b}, the $\log$-concavity assumption in the statement of \cite[Theorem~2.1]{DiNezza_Jubert_Lahdili_2024_b} is not needed. 
\end{rmk}

\begin{proof}[Proof of Theorem~\ref{thm:main2}]
Since most computations are similar to those in Section~\ref{sec:pf_main1} and Section~\ref{sec:auxiliary}, we highlight the main differences. 

\smallskip
Here, instead of assuming $\|F\|_{L^\infty} \leq C_0$, we assume that 
\begin{equation}\label{eq:conti_C0}
    \|\vph\|_{L^\infty} + \|F+f\|_{L^\infty} \leq C_0
\end{equation}
for some uniform constant $C_0 > 0$; also, $\sup_X \vph = \sup_X f = 0$. 
From \eqref{eq:twcscK}, we have 
\[
    \Ric(\homg) = \Ric(\om) - \ddc (F+f) + \ddc f + \ddc \log v(m_\homg).
\]
Following estimates similar to those in \eqref{eq:ess1} and \eqref{eq:ess2}, one can derive 
\begin{equation}\label{eq:conti_wLap_logtr}
\begin{split}
    \Dt_{\homg,v} \log \tr_\homg \om 
    &\geq - \frac{\langle \ddc (F+f), \om \rangle_\homg}{\tr_\homg \om} + \frac{\langle \ddc f, \om \rangle_\homg}{\tr_\homg \om} \\ 
    &\qquad - (A_0 + B) \tr_\homg \om - C_v A_1 - C_v A_2 \frac{1}{\tr_\homg \om}
\end{split}
\end{equation}
We remark that as before $\tr_\homg \om \geq n \sqrt[n]{\frac{v(m_\homg)}{e^F}}$. 
However, the function $F$ has no uniform control here (only $(F+f)$ is under control), so we shall keep $\frac{1}{\tr_\homg \om}$ throughout the estimates.

\smallskip
Let $C_\bt > 0$ be a constant so that $-C_\beta \om \leq \beta \leq C_\beta \om$. 
As $\beta + \ddc f \geq 0$, one gets $\ddc f \geq -C_\beta \om$. 
Hence, from \eqref{eq:conti_wLap_logtr}, we have
\begin{equation*}
    \Dt_{\homg,v} \log \tr_\homg \om 
    \geq - \frac{\langle \ddc (F+f), \om \rangle_\homg}{\tr_\homg \om} - (A_0 + B + C_\beta) \tr_\homg \om - C_v A_1 - C_v A_2 \frac{1}{\tr_\homg \om},
\end{equation*}

\smallskip
We consider 
\[
    G = -(a (F+f) + b \vph),\quad
    u = e^G \tr_\homg \om, \quad\text{and}\quad
    H = e^G u^{2p}. 
\]
The constants appearing below all depend on some of the constants appearing at the beginning of Section~\ref{sec:pf_main1}, and possibly also $a$ and $b$; in that case, we shall include $a$ and $b$ in the subscripts.  

\smallskip
Following computations similar to those used to obtain \eqref{eq:wLap_logu} and \eqref{eq:IPP2}, one can infer that 
\begin{align*}
    \Dt_{\homg,v} \log u 
    &\geq \left[b - (a + 1) (A_0 + C_\bt) - C_1\right] \tr_\homg \om - C_{2,a,b} - \frac{C_3}{\tr_\homg \om}\\
    &\qquad+ \frac{\Dt_\Om(F+f)}{ \tr_\homg \om} + \langle \dd \log v(m_\homg), \dd (F+f)\rangle_\homg,
\end{align*} 
and  
\begin{equation}\label{eq:conti_IPP}
\begin{split}
    0 &\geq 2p \int_X u^{2p+1} |\dd \log u|_\homg^2 v(m_\homg) \homg^n \\
    &\qquad + \left[b - (a+1) (A_0 + C_\beta) -C_1\right] \int_X u^{2p+1} (\tr_\homg \om) v(m_\homg) \homg^n\\
    &\qquad - C_{2,a,b} \int_X u^{2p+1} v(m_\homg) \homg^n 
    - C_3 \int_X \frac{u^{2p+1}}{\tr_\homg \om} v(m_\homg) \homg^n\\
    &\qquad + \int_X H [\Dt_\Om(F+f) + (\tr_\homg \om) \langle \dd \log v(m_\homg), \dd (F + f)\rangle_\homg] v(m_\homg) \homg^n.
\end{split}
\end{equation}
Then we follow the argument in Section~\ref{eq:analyze_I}. 
Note that in Section~\ref{eq:analyze_I}, the only two parts where $\tr_\homg \om \geq c_0$ was used are \eqref{eq:mix7} and \eqref{eq:mix10}. 
Hence, when one replaces $F$ by $(F+f)$ and goes through almost the same argument in Section~\ref{eq:analyze_I}, one only needs to keep $\frac{1}{\tr_\homg \om}$ in the integration of these two estimates. 
This allows us to obtain the following 
\begin{equation}
\begin{split}
    0 &\geq \left(2p - \frac{p^2+1}{a}\right) \int_X u^{2p+1} |\dd \log u|_\homg^2 v(m_\homg) \homg^n\\
    &\qquad + \left[b - (a+1) (A_0+C_\beta) - C_1\right] \int_X u^{2p+1} (\tr_\homg \om) v(m_\homg) \homg^n \\ 
    &\qquad - C_{4,a,b,p} \int_X u^{2p+1} v(m_\homg) \homg^n
    - C_{3,a,p} \underbrace{\int_X \frac{u^{2p+1}}{\tr_\homg \om} v(m_\homg) \homg^n}_{= \int_X u^{2p}e^G v(m_\homg) \homg^n}.
\end{split}
\end{equation} 
Choosing $a = \frac{p^2+1}{2p}$ and $b \geq (a+1) (A_0 + C_\beta) + C_1 + 1$, we obtain that 
\[
    \int_X (\tr_\homg \om)^{2p+2} v(m_\homg) \homg^n 
    \leq C_{5,p} \int_X (\tr_\homg \om)^{2p+1} v(m_\homg) \homg^n + C_{6,p} \int_X (\tr_\homg \om)^{2p} v(m_\homg) \homg^n,
\]
and after applying H\"older inequality, one can derive that 
\[
    \int_X (\tr_\homg \om)^{2p+2} v(m_\homg) \homg^n \leq C_{7,p}.
\]
Using \eqref{eq:det_tr_ineq}, \eqref{eq:conti_C0} and \eqref{eq:twcscK}, one can derive that, for $k \geq 1$,  
\begin{align*}
    \int_X (\tr_\om \homg)^k e^{(k-1)f} \om^n
    &\leq \int_X \frac{e^{(k-1)F}}{v^k(m_\homg)} e^{(k-1)f} (\tr_\homg \om)^{k(n-1)} e^F \om^n\\ 
    &\leq \frac{1}{\min_{\Pol_\af} v^k} \int_X (\tr_\homg \om)^{k(n-1)} e^{(k-1)(F+f)} v(m_\homg) \homg^n \leq C_k.
\end{align*}
This shows the first part of Theorem~\ref{thm:main2}.

\smallskip
The last part follows similarly from the proof of \cite[Corollary~2.7]{DiNezza_Jubert_Lahdili_2024_b}. 
Assume that $\int_X e^{-p_0 f} \om^n < +\infty$ for some $p_0 > 1$. 
For $1 < q < p_0$, choose $k = \frac{q(p_0-1)}{p_0-q} > q$.
Then, applying H\"older inequality, we obtain that 
\begin{align*}
    \int_X (\tr_\om \homg)^q \om^n 
    \leq \left(\int_X e^{(k-1)f} (\tr_\om \homg)^k \om^n\right)^{q/k} \left(\int_X e^{-\frac{q(k-1)}{k-q}f} \om^n\right)^{1-q/k}  
\end{align*}
and by our choice $\frac{q(k-1)}{k-q} = p_0$.
This concludes the proof of Theorem~\ref{thm:main2}. 
\end{proof}

Theorem~\ref{thm:main2} helps to get rid of the $\log$-concavity assumption in \cite{DiNezza_Jubert_Lahdili_2024, Han_Liu_2024, DiNezza_Jubert_Lahdili_2024_b}. 
We thus generalize \cite[Theorems~1.1 and 1.2]{DiNezza_Jubert_Lahdili_2024_b} to the following results: 

\begin{thm} 
Fix a K\"ahler class $\af$ on $X$. 
Let $\BT \subset \Aut_{\red}(X)$ be a real torus in the reduced group of automorphisms of $X$ and let $(v,w) \in \CC_{>0}^\infty(\Pol_\af) \times \CC^\infty(\Pol_\af)$ be two weight functions satisfying \eqref{norma:weigh}.
If the weighted Mabuchi functional $\M_{v,w}$ is relatively $\BT^\BC$-coercive, then there exists a $(v,w)$-cscK metric in $\af$. 
\end{thm}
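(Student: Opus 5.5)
The plan is to run the continuity method of \cite{DiNezza_Jubert_Lahdili_2024_b} verbatim, replacing the single step where $\log$-concavity was used by Theorem~\ref{thm:main2}. Fix a $\BT$-invariant reference metric $\om \in \af$ and consider the path of twisted weighted cscK equations of \cite{DiNezza_Jubert_Lahdili_2024_b}: for $t \in [0,1]$, solve
\[
    \Scal_v(\om_{\vph_t}) - w(m_{\om_{\vph_t}}) = \frac{1-t}{t}\,\bigl(\tr_{\om_{\vph_t},v}(\om) - \text{const}\bigr).
\]
This is a twist by $\beta = \frac{1-t}{t}\om$, written as \eqref{eq:twcscK} with $f$ built from the potential. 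For small $t$ one has solvability by the implicit function theorem and the usual argument, since twisting by a K\"ahler form makes the twisted weighted Mabuchi functional strictly convex. Openness follows from invertibility of the weighted linearized operator, which is a weighted Lichnerowicz operator plus a positive term, modulo the $\BT^\BC$ symmetries. This step uses no concavity of $v$, so I will simply quote it from \cite{DiNezza_Jubert_Lahdili_2024_b, Lahdili_2019}.

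For closedness, let $t_k \to t_\infty \in (0,1]$. The relative $\BT^\BC$-coercivity of $\M_{v,w}$ makes the twisted functionals uniformly coercive along the path. Since solutions minimize them, after pulling back by suitable $\gamma_k \in \BT^\BC$ we obtain a uniform bound on $\J(\gamma_k^*\om_{\vph_{t_k}})$, hence on the entropy. The pulled-back metrics solve an equation of the form \eqref{eq:twcscK}, where the twisting form $\bt$ is now $\gamma_k^*$ of the twist and $f$ is the resulting potential with $\bt + \ddc f \ge 0$; this is exactly the role of $f$. The entropy bound together with the $L^\infty$ estimates of Chen--Cheng type in the weighted form of \cite{DiNezza_Jubert_Lahdili_2024_b, Han_Liu_2024} (which are insensitive to concavity) give the uniform bound \eqref{eq:conti_C0} on $\|\vph\|_{L^\infty}+\|F+f\|_{L^\infty}$, as well as $\int_X e^{-p_0 f}\om^n \le M$ for some $p_0>1$ by Skoda's uniform integrability.

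Theorem~\ref{thm:main2} then yields $\|\tr_\om\homg\|_{L^q} \le C_q$, and by the remark following it, the gradient estimate \cite[Theorem~2.1]{DiNezza_Jubert_Lahdili_2024_b} holds without concavity. These are precisely the inputs from which \cite{DiNezza_Jubert_Lahdili_2024_b} extract $W^{2,p}$ and then higher estimates. Away from $t=1$ the twisting form is smooth and bounded, so these estimates give smooth convergence and closedness in $(0,1)$.

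The main obstacle is the endpoint $t \to 1$. There the estimates must survive even though the $\gamma_k$ may degenerate. I would follow \cite{DiNezza_Jubert_Lahdili_2024_b}: the uniform bounds give weak compactness in the finite-energy space $\CE^1$, and the limit minimizes $\M_{v,w}$. Regularity of such minimizers is then obtained by rerunning the above estimates along the path, so the limit is a smooth $(v,w)$-cscK metric. I will check carefully that every constant in Theorem~\ref{thm:main2} is uniform in $k$: the bound on $\bt$ must be independent of the degenerating $\gamma_k$, and this is why the statement of Theorem~\ref{thm:main2} keeps only $\bt+\ddc f\ge0$ and $\int e^{-p_0 f}$.
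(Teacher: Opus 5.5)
Your proposal follows the paper's proof: the paper likewise keeps the continuity method of \cite{DiNezza_Jubert_Lahdili_2024_b} unchanged and observes that $\log$-concavity entered only through the integral Laplacian estimate \cite[Theorem~2.6]{DiNezza_Jubert_Lahdili_2024_b}, on which the gradient bound in \cite[Theorem~2.1]{DiNezza_Jubert_Lahdili_2024_b} rests; Theorem~\ref{thm:main2} now supplies that estimate for every $v>0$. The one bookkeeping point, which your last paragraph already anticipates, concerns how to set up the twisted equation: keep the fixed form $\bt=\frac{1-t}{t}\om$ and put all the $\gamma_k$-dependence into $f=\frac{1-t}{t}\psi_{\gamma_k}$, where $\gamma_k^*\om=\om+\ddc\psi_{\gamma_k}$ and $\sup_X\psi_{\gamma_k}=0$, rather than calling $\gamma_k^*$ of the twist ``$\bt$'', so that the constants in Theorem~\ref{thm:main2} stay uniform.
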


Combining this result with \cite[Theorem~1]{Apostolov_Jubert_Lahdili_2023} proves Theorem~\ref{t:A}.

\section{Laplacian estimate for weighted K\"ahler--Ricci solitons}\label{sec:wKRS}
The estimates in the previous section of course cover the weighted K\"ahler--Ricci solitons. 
However, to obtain a uniform Laplacian estimate, one also needs to go through \cite{DiNezza_Jubert_Lahdili_2024, Han_Liu_2024}, which makes the process more involved. 
As the weighted K\"ahler--Ricci soliton equation is of second order, one may wonder if there is a more direct way to obtain the Laplacian estimate without assuming $v$ to be $\log$-concave. 
We therefore decide to present a much simpler argument for the weighted K\"ahler--Ricci soliton in this section. 

\smallskip
Let $X$ be an $n$-dimensional Fano manifold. 
Fix a $\BT$-invariant K\"ahler metric $\om$ in $4\pi c_1(X)$. 
Recall that the $v$-soliton equation is 
\[
    \Ric(\om_\vph) = \om_\vph + \ddc \log v(m_{\om_\vph})
\]
and it can be reformulated into the following complex Monge--Amp\`ere type equation
\begin{equation}\tag{wKRS}\label{eq:wKRS}
    v(m_{\om_\vph}) \om_\vph^n = e^{-\vph + h_\om} \om^n, 
\end{equation}
where $h_\om$ is a Ricci potential of $\om$ such that $\Ric(\om) - \om = \ddc h_\om$. 
Set $\homg = \om_\vph$ and $\af = 4 \pi c_1(X)$. 

\begin{thm}\label{thm:main3}
Let $\vph$ be a solution to \eqref{eq:wKRS}. 
Then there is a uniform constant $C > 0$ such that 
\[
    \tr_\om \homg \leq C, 
\]
where $C$ depends only on $\om$, $\|\vph\|_{L^\infty}$, $\|h_\om\|_{L^\infty}$, 
$\displaystyle \max_{\Pol_{\af}} \left\{|v| + |D_i \log v| + |D^2_{ij} \log v|\right\}$,  
$\displaystyle \min_{\Pol_{\af}} v$, and 
$\displaystyle \max_i \max_{\eta \in \Pol_{\af}}|\langle\xi_i,\eta\rangle_{\mathfrak{t},\mathfrak{t}^\vee}|$. 
\end{thm}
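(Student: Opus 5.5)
We prove a pointwise bound on $\tr_\homg \om$ by the maximum principle applied to $\log \tr_\homg \om - A\vph$, then convert it to a bound on $\tr_\om \homg$ via \eqref{eq:det_tr_ineq}.

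\textbf{Step 1: Ricci form of $\homg$.} From \eqref{eq:wKRS} we have
\[
    \Ric(\homg) = \Ric(\om) + \ddc \vph - \ddc h_\om + \ddc \log v(m_\homg).
\]
Since $\Ric(\om) - \ddc h_\om = \om$ and $\om + \ddc\vph = \homg$, this is the soliton equation $\Ric(\homg) = \homg + \ddc \log v(m_\homg)$.

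\textbf{Step 2: differential inequality for $\log \tr_\homg \om$.} Apply the Chern--Lu inequality (Lemma~\ref{lem:Chern-Lu}) to the weighted Laplacian exactly as in \eqref{eq:ess1}. We have $\langle \homg, \om\rangle_\homg = \tr_\homg \om$. The terms involving $\log v$ are handled by Claim~\ref{claim:dmdm_ddcm}, as in \eqref{eq:wKRS1}--\eqref{eq:ess2}:
\begin{itemize}
    \item the Hessian part becomes $\sum_{i,j}(D^2_{ij}\log v)\, g_\om(\xi_i,\xi_j)$, bounded by $C_vA_2$;
    \item the terms $\CL_{J\xi_i}\tr_\homg\om$ cancel against $\langle \dd\log v(m_\homg), \dd \tr_\homg\om\rangle_\homg$;
    \item what remains is $-\sum_i (D_i\log v)\tr_\homg(\CL_{J\xi_i}\om) \geq -C_vA_1 \tr_\homg\om$.
\end{itemize}
This gives
\[
    \Dt_{\homg,v}\log\tr_\homg\om \geq 1 - B\tr_\homg\om - C_vA_1 - \frac{C_vA_2}{\tr_\homg\om}.
\]

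\textbf{Step 3: lower bound for $\tr_\homg \om$.} By AM--GM,
\[
    \tr_\homg\om \geq n\bigl(v(m_\homg)e^{\vph-h_\om}\bigr)^{1/n} \geq c_0 > 0,
\]
using $\min v$, $\|\vph\|_{L^\infty}$ and $\|h_\om\|_{L^\infty}$. Hence $\Dt_{\homg,v}\log\tr_\homg\om \geq -C_1(1+\tr_\homg\om)$.

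\textbf{Step 4: weighted Laplacian of $\vph$.} By \eqref{eq:wtr_homg_om}, which uses only $\Pol_\af=\im(m_\homg)=\im(m_\om)$,
\[
    \Dt_{\homg,v}\vph = \tr_{\homg,v}(\homg-\om) \leq n - \tr_\homg\om + 2C_vC_\Pol.
\]

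\textbf{Step 5: maximum principle.} Set $Q = \log\tr_\homg\om - A\vph$ with $A = C_1+1$. Then
\[
    \Dt_{\homg,v} Q \geq \tr_\homg\om - C_2.
\]
At a maximum point of $Q$ we have $\Dt_{\homg,v}Q\le 0$. This holds because $\Dt_{\homg,v}$ is the Laplacian $\Dt_\homg$ plus a first-order term, which vanishes at a critical point. Hence $\tr_\homg\om\le C_2$ there. Since $\vph$ is uniformly bounded, $\tr_\homg\om \leq C$ everywhere.

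\textbf{Step 6: conclusion.} By \eqref{eq:det_tr_ineq},
\[
    \tr_\om\homg \leq \frac{\homg^n}{\om^n}(\tr_\homg\om)^{n-1} = \frac{e^{-\vph+h_\om}}{v(m_\homg)}(\tr_\homg\om)^{n-1} \leq C.
\]

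\textbf{Dependence of constants.} The constants $A_1, A_2, B$ depend only on $\om$ and the $\xi_i$. The constant $A_0$ is not needed, since $\Ric(\om)$ was absorbed into $\om + \ddc h_\om$. The term $\dd\vph$ never requires a gradient bound, since only $\Dt_{\homg,v}\vph$ enters. This matches the stated dependence list.

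\textbf{Main obstacle.} The only delicate point is Step 2: the $\log v$ terms arising from $\Ric(\homg)$ must be controlled without a sign on $D^2\log v$. This is achieved by combining Claim~\ref{claim:dmdm_ddcm} with the drift term of $\Dt_{\homg,v}$, so that the derivatives of $\tr_\homg\om$ cancel exactly; everything else is routine.
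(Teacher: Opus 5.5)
Your proof is correct and is essentially the paper's argument. Both use the Chern--Lu inequality, Claim~\ref{claim:dmdm_ddcm}, the AM--GM lower bound on $\tr_\homg \om$, and the maximum principle for $\log \tr_\homg \om - A\vph$, with the bounded quantity $\sum_i (D_i \log v)(m_\homg)(m_\homg^{\xi_i} - m_\om^{\xi_i})$ as the only first-order contribution. The one cosmetic difference is the choice of operator. You use $\Dt_{\homg,v}$, so the $\CL_{J\xi_i}\tr_\homg \om$ terms cancel identically as in \eqref{eq:ess2}, and the moment-map term enters through $\Dt_{\homg,v}\vph$. The paper uses $\Dt_\homg$, keeps the term $\sum_i (D_i \log v)(m_\homg)\,\CL_{J\xi_i}\log \tr_\homg \om$, and evaluates it at the maximum point via $\CL_{J\xi_i}(\log \tr_\homg \om)(x_0) = -A(m_\homg^{\xi_i} - m_\om^{\xi_i})(x_0)$.
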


\begin{proof}
Let $B > 0$ be a constant such that $\Bisec(\om) \leq B$.  
From \eqref{eq:wKRS}, we have 
\[
    \Ric(\homg) = \homg + \ddc \log v(m_\homg). 
\] 
Applying Chern--Lu inequality (Lemma~\ref{lem:Chern-Lu}), we obtain 
\begin{equation}\label{eq:Lap_1}
\begin{split}
    \Dt_\homg \log \tr_\homg \om
    &\geq \frac{\langle \Ric(\homg), \om \rangle_\homg}{\tr_\homg \om} - B \tr_\homg \om\\
    &= \frac{\langle \homg + \ddc \log v(m_\homg), \om \rangle_\homg}{\tr_\homg \om} - B \tr_\homg \om\\
    &= \frac{\langle \ddc \log v(m_\homg), \om \rangle_\homg}{\tr_\homg \om} - B \tr_\homg \om + 1.
\end{split}
\end{equation}
By Claim~\ref{claim:dmdm_ddcm}, we have 
\[
    \langle \dd m_\homg^{\xi_i} \w \dc m_\homg^{\xi_j} , \om\rangle_\homg = g_\om (\xi_i,\xi_j)
    \quad\text{and}\quad 
    \langle \ddc m_\homg^{\xi_i}, \om \rangle_\homg 
    = \CL_{J\xi_i} \tr_\homg \om - \tr_\homg(\CL_{J\xi_i} \om),
\]
and thus, 
\begin{align*}
    \langle \ddc \log v(m_\homg), \om \rangle_\homg 
    &= \sum_{i,j} (D_{ij}^2 \log v)(m_\homg) g_\om(\xi_i,\xi_j)\\
    &\qquad + \sum_i (D_i \log v)(m_\homg) (\CL_{J\xi_i} \tr_\homg \om - \tr_\homg(\CL_{J\xi_i} \om))\\
    &\geq \sum_i (D_i \log v)(m_\homg) \CL_{J\xi_i} \tr_\homg \om - C_v A_1 \tr_\homg \om - C_v A_2.
\end{align*}
Substituting the above inequality into \eqref{eq:Lap_1}, we get
\begin{align*}
    \Dt_\homg \log \tr_\homg \om 
    &\geq \frac{\sum_i (D_i \log v)(m_\homg) (\CL_{J\xi_i} \tr_\homg\om)}{\tr_\homg \om} - B \tr_\homg \om - C_1\\
    &= \sum_i (D_i \log v)(m_\homg) (\CL_{J\xi_i} (\log \tr_\homg \om)) - B \tr_\homg \om - C_1
\end{align*}
and thus, 
\begin{align*}
    \Dt_\homg (\log \tr_\homg \om - A \vph)
    &\geq \sum_i (D_i \log v)(m_\homg) (\CL_{J\xi_i} (\log \tr_\homg \om)) - B \tr_\homg \om - C_1 - A(n-\tr_\homg \om)\\
    &= \sum_i (D_i \log v)(m_\homg) (\CL_{J\xi_i} (\log \tr_\homg \om)) + (A-B) \tr_\homg \om - (C_1 + An). 
\end{align*}
Set $H = \log \tr_\homg \om - A \vph$ and $A = B+1$. 
At a maximum point $x_0$ of $H$, we have 
\[
    (\Dt_\homg H)(x_0) \leq 0
\]
and 
\begin{align*}
    &\CL_{J\xi_i} H (x_0) = 0 \\
    \iff & \CL_{J\xi_i} (\log \tr_\homg \om)(x_0) = A \CL_{J\xi_i} \vph (x_0)
    = -A \iota_{\xi_i} \dc \vph (x_0)
    = -A (m_\homg^{\xi_i} - m_\om^{\xi_i})(x_0). 
\end{align*}
Since $\im(m_\homg) = \Pol_{\af} = \im(m_\om)$ by our normalization, $m_\homg^{\xi_i}(X)$ and $m_\om^{\xi_i}(X)$ are uniformly bounded. 
We have the following at $x_0$, 
\begin{align*}
    0 &\geq (\Dt_\homg H)(x_0)\\
    &\geq -A \sum_i (D_i \log v) (m_\homg^{\xi_i} - m_\om^{\xi_i})(x_0) + \tr_\homg \om(x_0) - (C_1 + An)\\
    &\geq \tr_\homg \om(x_0) - C_2.
\end{align*}
Then
\[
    \log \tr_\homg \om \leq H(x_0) + A\vph 
    \leq \log C_2 - A \vph(x_0) + A\vph
    \leq \log C_2 + 2 A \|\vph\|_{L^\infty}. 
\]
This provides a uniform Laplacian estimate of $\vph$ as follows
\[
    \tr_\om \homg \leq \frac{\homg^n}{\om^n} (\tr_\homg \om)^{n-1}
    \leq C_2 \frac{e^{-\vph + h_\om}}{\min_{\Pol_{\af}} v} e^{(n-1)2A\|\vph\|_{L^\infty}}
    \leq C_3 e^{[(n-1)2A+1]\|\vph\|_{L^\infty}}, 
\]
and this completes the proof. 
\end{proof}

Combining this with the regularizing argument in \cite[Proposition~3.8]{Han_Li_2023}, we obtain the following regularity result. 

\begin{prop}
Let $X$ be an $n$-dimensional Fano manifold, let $\BT$ be a maximal real torus in $\Aut_{\red}(X)$, and let $v \in \CC^\infty_{>0}(\Pol_{\af})$.  
If $\vph \in \CE^1(X,\om)^\BT$ is a finite energy solution to \eqref{eq:wKRS}, then $\vph \in \CH(X,\om)^\BT$. 
\end{prop}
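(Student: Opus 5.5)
The plan follows the regularization scheme of \cite[Proposition~3.8]{Han_Li_2023}, with Theorem~\ref{thm:main3} replacing the Laplacian estimate that there relied on $\log$-concavity of $v$. Let $\vph \in \CE^1(X,\om)^\BT$ be a finite energy solution of \eqref{eq:wKRS}. As a first step, I would show that $\vph$ is bounded, so that $\|\vph\|_{L^\infty}$ is finite. The right-hand side of \eqref{eq:wKRS} is $e^{-\vph+h_\om}\om^n/v(m_{\om_\vph})$, and $v$ is bounded above and below on the compact polytope $\Pol_\af$. Since $\vph \in \CE^1$, the function $e^{-\vph}$ lies in $L^p(\om^n)$ for every $p$, by the Skoda-type integrability of finite energy potentials (zero Lelong numbers). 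Kołodziej's estimate then gives $\vph \in L^\infty$, and indeed $\vph$ continuous.

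The second step is to approximate $\vph$ by smooth $\BT$-invariant solutions of perturbed equations with uniform $L^\infty$ bounds, as in \cite{Han_Li_2023}. Concretely, I would take smooth $\BT$-invariant $\psi_j$ decreasing to $\vph$ (e.g.\ by Demailly regularization averaged over $\BT$) and solve
\[
    v(m_{\om_{\vph_j}}) \om_{\vph_j}^n = e^{-\psi_j + h_\om + c_j} \om^n,
\]
where $c_j$ is a normalizing constant. This is a weighted Calabi--Yau type equation, solvable smoothly via the weighted Monge--Amp\`ere theory. The $L^\infty$ bounds for $\vph_j$ follow again from Kołodziej's estimate, since $\psi_j \geq \vph$ gives $e^{-\psi_j} \leq e^{-\vph}$, which is uniformly in $L^p$.

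The third and main step is a uniform Laplacian bound for $\vph_j$. Here $\Ric(\om_{\vph_j}) = \Ric(\om) + \ddc \psi_j - \ddc h_\om + \ddc \log v(m_{\om_{\vph_j}})$, and one reruns the proof of Theorem~\ref{thm:main3} using Chern--Lu together with Claim~\ref{claim:dmdm_ddcm}, so no $\log$-concavity is needed. The obstacle is the term $\ddc \psi_j$, which is not uniformly bounded. I would handle it as Han--Li do: since $\om + \ddc \psi_j \geq 0$, we have $\ddc\psi_j \geq -\om$, so $\langle \ddc\psi_j, \om\rangle_\homg/\tr_\homg\om \geq -\tr_\homg \om$. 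This is absorbed by the $A\vph_j$ term once $A$ is chosen large. The first-order terms $\CL_{J\xi_i}\log\tr_\homg\om$ are killed at the maximum point exactly as in the proof of Theorem~\ref{thm:main3}. If $\psi_j$ is only quasi-psh without a uniform lower bound on its Hessian, I would instead use the auxiliary function $\log\tr_\homg\om - A\vph_j + \psi_j$, following the structure of the twisted argument in Theorem~\ref{thm:main2}.

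This yields $\tr_\om\om_{\vph_j} \leq C$ uniformly, with $C$ depending on $\sup|\psi_j|$ only through the $L^\infty$ bounds. Passing to the limit, $\vph$ has a bounded Laplacian, so $\vph \in \CC^{1,\alpha}$. Bootstrapping \eqref{eq:wKRS} with Evans--Krylov and Schauder estimates, noting that $m_{\om_\vph} = m_\om + \dc\vph$ has the same regularity as $\dc\vph$, gives smoothness, hence $\vph \in \CH(X,\om)^\BT$. Maximality of $\BT$ is used, as in \cite{Han_Li_2023}, only to make sense of $\BT$-invariant finite energy solutions and their uniqueness modulo $\BT^\BC$ in the approximation step.
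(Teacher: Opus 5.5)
Your proposal follows essentially the route the paper indicates: the approximation scheme of \cite[Proposition~3.8]{Han_Li_2023}, with the Chern--Lu maximum-principle argument of Theorem~\ref{thm:main3} (applied to $\Ric(\om_{\vph_j}) = (\om+\ddc\psi_j) + \ddc\log v(m_{\om_{\vph_j}})$, where $\om+\ddc\psi_j\geq 0$) replacing the step that needed $\log$-concavity; the same estimate can also supply the $\CC^2$ bound needed to solve your smooth approximating equations by continuity. The one step you should make explicit is why the $\CC^{1,\alpha}$-limit of the $\vph_j$ equals $\vph$: in your scheme this requires uniqueness up to additive constants for the weighted Monge--Amp\`ere equation with the fixed right-hand side $e^{-\vph+h_\om}\om^n$ (Berman--Witt Nystr\"om), together with $c_j\to 0$ from the invariance of $\int_X v(m_\homg)\homg^n$, not uniqueness modulo $\BT^\BC$.
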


\begin{rmk}
We remark that although \cite[Proposition~3.8]{Han_Li_2023} is stated without assuming that $v$ is $\log$-concave, the proof in \cite[(3.15)]{Han_Li_2023} appears to require this assumption. 
We emphasize that our argument also applies to $\log$ Fano pairs without any $\log$-concavity assumption on $v$. 
Since the proof is a direct combination of the strategy in the proof of Theorem~\ref{thm:main3} and the approximation argument in \cite[Proposition~3.8]{Han_Li_2023}, we leave the proof to interested readers. 
This therefore establishes a Yau--Tian--Donaldson type correspondence for general $v$-solitons on $\log$ Fano pairs. 
\end{rmk}

\section{Geometric applications of the weighted YTD correspondence}{\label{s:ytd}}

In this section, we apply the weighted Yau--Tian--Donaldson correspondence to characterize the existence of extremal Sasaki structures and conformally K\"ahler, Einstein--Maxwell metrics. We first recall the notion of relative uniform weighted K-stability. We then discuss the relation between weighted K-stability for suitable weight functions and the Collins--Sz\'ekelyhidi stability of polarized affine cones \cite{Collins_Szekelyhidi_2019}, following \cite{Apostolov_Calderbank_Legendre_2021}.

\subsection{Weighted stability of polarized varieties}
We begin by recalling relative uniform weighted K-stability for a smooth polarized variety.

\smallskip
Let $(X,\omega,\T)$ be an $n$-dimensional compact K\"ahler manifold. 
Suppose moreover that the K\"ahler class $\af = [\om]$ is polarized, i.e. $\alpha = 2 \pi c_1(L)$ is the first Chern class of an ample line bundle $L\rightarrow X$. 

\subsubsection{Test configurations}
We review the usual definition of a smooth equivariant K\"ahler test configuration.

\begin{defn}\label{defn:TC}
A $\T$-equivariant K\"ahler test configuration for a K\"ahler manifold $(X,\af,\T)$ consists of a compact K\"ahler manifold $\CX$ equipped with the following data: 
\begin{itemize}
    \item A surjective holomorphic map $\pi: \CX \to \BP^1$;
    \item A $\BC^*$-action $\rho$ covering the standard $\BC^*$-action on $\BP^1$, together with a $\T$-action commuting with $\rho$ and preserving the fibres of $\pi$;
    \item A $\BT \times \BC^*$-equivariant biholomorphism
    \[
        \Pi_0: \CX \setminus \pi^{-1}(0)
        \cong
        X \times (\BP^1\setminus\{0\});
    \]
    \item A K\"ahler class $\CA \in H^{1,1}(\CX,\BR)$ satisfying 
    \[
        (\Pi_0^{-1})^*(\CA)|_{X \times \{\tau\}} = \af,
    \]
    for every $\tau \in \BP^1 \setminus \{0\}$.
\end{itemize}
\end{defn}

A smooth polarized test configuration for $(X,L)$ is a smooth polarized variety $(\CX, \CL)$ such that, for some $r \in \BN^*$, $\left(\CX, \frac{2\pi}{r}c_1(\CL)\right)$ defines a K\"ahler test configuration for $(X, 2\pi c_1(L))$, and under the trivialization over $\BC^*$, $\CL|_{X \times \{\tau\}} \simeq L^r$ for every $\tau \in \BC^*$.

\subsubsection{Non-Archimedean $\J$-functional}
Without loss of generality, we may work with \emph{dominating} test configurations, meaning that $\Pi_0$ extends to a $\BT \times \BC^*$-equivariant morphism $\Pi: \CX \to X \times \BP^1$. 
For such a test configuration, the non-Archimedean $\J$-functional is defined by
\[
    \J^{\mathrm{NA}}(\CX,\CA)
    :=
    \frac{1}{\af^n} \left( \int_{\CX}(\Pi^*\af)^n \wedge \CA
    - \frac{1}{n+1} \int_{\CX} \CA^{n+1} \right).
\]

\smallskip
We next recall the construction of the $\xi$-twist $(\CX_\xi, \CA_\xi)$ of a smooth test configuration $(\CX, \CA)$ (cf. \cite{Hisamoto_16, Chi_Li_2022}). 
Let $\xi$ be an integral lattice element in the Lie algebra $\mathfrak{t}$ of $\T$. 
It induces a $\BC^*$-action $\rho_\xi$ on $\CX$. 
Composing the original $\BC^*$-action $\rho$ with $\rho_\xi$ gives a new $\BC^*$-action $\rho \cdot \rho_\xi$ on $\CX \setminus \pi^{-1}(\infty)$.

\smallskip
The twisted test configuration $(\CX_\xi, \CA_\xi)$ is obtained by taking the trivial compactification of $\CX \setminus \pi^{-1}(\infty)$ at infinity with respect to this new action (see \cite[Definition~2.7]{Boucksom_Hisamoto_Jonsson_2019}). 
We denote by $\BS^1_\xi$ the $\BS^1$-action on $\CX_\xi$ obtained by composing the original $\BS^1$-action with that induced by $\xi$. 
Thus, $(\CX_\xi, \CA_\xi)$ is a $\T \times \BS^1_\xi$-equivariant test configuration.

\smallskip
The construction extends similarly to rational elements $\xi \in \mathfrak{t}$. 
For irrational $\xi \in \mathfrak{t}$, the pair $(\CX_\xi,\CA_\xi)$ need not define a test configuration. 
Nevertheless, the quantity $\J^{\mathrm{NA}}(\CX_\xi,\CA_\xi)$ is well defined by continuity from rational elements $\xi \in \mathfrak{t}$.

\smallskip
Following \cite{Hisamoto_16, Chi_Li_2022}, for any dominating test configuration $(\CX,\CA)$, we define the reduced non-Archimedean $\J$-functional by
\begin{equation}\label{non:arc:j}
    \J^{\mathrm{NA}}_{\T^{\BC}} (\CX,\CA)
    :=
    \inf_{\xi\in\mathfrak{t}}
    \J^{\mathrm{NA}} (\CX_{\xi},\CA_{\xi}).
\end{equation}
This is the reduced functional used in the stability notion introduced below. 

\subsubsection{Weighted Donaldson--Futaki invariant}
Let $v>0$ and $w$ be smooth weight functions on the moment polytope $\Pol_\af$. 
For a smooth test configuration $(\CX,\CA)$, the $(v,w)$-weighted Donaldson--Futaki invariant introduced in \cite{Lahdili_2019} is 
\[
    \DF_{v,w}(\CX,\CA)
    =
    -\int_{\CX}
    \left(\Scal_v(\Om_{\CX}) - w(m_{\Om_{\CX}})\right) \Om_{\CX}^{[n+1]}
    + (8\pi) \Vol(X,\af),
\]
where $\Om_{\CX}$ is any $\T$-invariant K\"ahler metric in $\CA$ with $\Pol_\af$-normalized moment map $m_{\Om_{\CX}}$. 

\smallskip
When $w>0$, we define the $\T$-relative $(v,w)$-weighted Donaldson--Futaki invariant by
\begin{equation*}
    \DF^{\rel}_{v,w} := \DF_{v,w\ell_{v,w}^{\ext}},
\end{equation*}
where $\ell_{v,w}^{\mathrm{ext}}$ is the affine-linear function introduced in \eqref{lext}.

\begin{defn}{\label{d:tc}}
A polarized compact K\"ahler manifold $(X,\alpha,\T)$ is \emph{$\T$-relatively uniformly $(v,w)$-weighted K-stable} if there exists a constant $\lambda > 0$ such that
\[
    \DF^{\rel}_{v,w}(\CX,\CA)
    \geq \lambda \, \J^{\mathrm{NA}}_{{\T}^{\BC}} (\CX,\CA)
\]
for every $\T$-equivariant dominating smooth test configuration $(\CX, \CA)$. 
\end{defn}

\subsection{Application to Sasaki manifolds}{\label{s:sasaki}}
Let $(N, \CD, J_N)$ be a smooth compact strictly pseudoconvex CR manifold of real dimension $2n+1$. 
More precisely, $\CD \subset TN$ is a distribution of rank $2n$ and $J_N^2 = -\Id_\CD$. 
We further assume that $J_N$ is integrable on $\CD$, i.e. the complex distribution $\CD^{1,0} \subset TN \otimes \BC$ defined as the $+i$-eigenspace of $J_N$, is stable under the Lie bracket. 
Choose a contact form $\eta_N$ such that $\CD=\ker(\eta_N)$ and with sign chosen so that
\[
    (\dd \eta_N)_{|_{\CD}}(\cdot,J_N\cdot) > 0.  
\]
Suppose that there is a compact torus $\widehat{\T} \subset \Aut_{\mathrm{CR}}(N,\CD,J_N)$ acting on $N$, where $\Aut_{\mathrm{CR}}(N,\CD,J_N)$ is the group of diffeomorphisms preserving both $\CD$ and $J_N$.
We denote by $\widehat{\mathfrak{t}}$ the Lie algebra of $\widehat{\T}$. 

\subsubsection{Sasaki structures}
The contact Lie algebra is $\mathfrak{con}(N,\CD) := \Gamma(TN/\CD)$, equipped with the Jacobi bracket $[\xi,\chi] := \eta_{\CD}([\xi^\CD,\chi^\CD])$. 
Here $\eta_\CD: TN \to TN / \CD$ is the canonical projection, and $\xi^\CD, \chi^\CD$ are the unique lifts of $\xi$ and $\chi$ to vector fields on $N$ preserving $\CD$ (cf. \cite{Boyer_Galicki_2008} and
\cite[Lemma~1]{Apostolov_Calderbank_2021}). 
The Lie subalgebra 
\[
    \mathfrak{cr}(N,\CD,J_N) :=
    \left\{
    \xi \in \mathfrak{con}(N,\CD)
    \mid
    \mathcal L_{\xi^\CD}J_N=0
    \right\}
\]
corresponds to the CR vector fields. 
For a nowhere vanishing section $\xi$ of $TN/\CD$, let $\eta_\xi := \xi^{-1} \eta_\CD$, where $\xi^{-1} \in \Gm((TN/\CD)^\ast)$ is characterized by $\xi^{-1}(\xi) = 1$. 
Let $\mathfrak{con}_+(N,\CD)$ denote the open cone of positive sections, namely those satisfying ${\dd \eta_\xi}_{|\CD}(\cdot,J_N \cdot) > 0$.
The \emph{Sasaki cone} of $(N,\CD,J_N)$ is defined by
\[
    \mathfrak{cr}_+(N,\CD,J_N)
    :=
    \mathfrak{cr}(N,\CD,J_N)
    \cap
    \mathfrak{con}_+(N,\CD).
\]
An element $\xi \in \mathfrak{cr}_+(N,\CD,J_N)$ is called a \textit{Sasaki structure}, and $(N,\CD,J_N,\xi)$ is called a \textit{Sasaki manifold}. 

\subsubsection{Transverse K\"ahler structure}
The transverse K\"ahler form associated with $\xi$ is given by
\begin{equation}{\label{tran:k}}
    \om_\xi := {\dd \eta_\xi}_{|_{\CD}}.
\end{equation}
We denote by $\Scal(\om_\xi)$ the \textit{Tanaka--Webster scalar curvature} of the Sasaki manifold $(N,\CD,J_N,\xi)$.

\begin{defn}
A Sasaki manifold $(N,\CD,J_N,\xi)$ is said to be extremal if
\[
    \Scal(\om_\xi) \xi \in \mathfrak{cr}(N,\CD,J_N).
\]
\end{defn}

By \cite[Lemma~2]{Apostolov_Calderbank_2021}, the above definition is equivalent to requiring that $\Scal(\om_\xi)$ be a \textit{transverse Killing potential} for the transverse K\"ahler structure $(J_N,\omega_\xi)$.

\smallskip
We now fix a regular Sasaki structure $\chi \in \widehat{\mathfrak t}\cap \mathfrak{cr}_+(N,\CD,J_N)$, which means that $\chi^\CD$ generates a free $\BS^1$-action on $N$ denoted by $\BS^1_\chi$.
Define 
\begin{align*}
    &\mathfrak{con}^{\chi}(N,\CD)
    := \left\{ \xi \in \mathfrak{con}(N,\CD) \mid [\chi,\xi] = 0\right\}, \\ 
    &\mathfrak{cr}^{\chi}(N,\CD,J_N)
    := \mathfrak{con}^{\chi}(N,\CD) \cap \mathfrak{cr}(N,\CD,J_N), \\
    &\mathfrak{cr}^{\chi}_+(N,\CD,J_N)
    := \mathfrak{con}^{\chi}(N,\CD) \cap \mathfrak{cr}_+(N,\CD,J_N).
\end{align*}
Since $\chi$ is regular, there is a principal $\BS^1$-bundle structure 
\begin{equation*}
    \pi_\chi: N \to M_\chi := N/\BS^1_\chi
\end{equation*}
over the real $2n$-dimensional smooth manifold $M_\chi$. 
The transverse K\"ahler structure of $(N,\CD,J_N,\chi)$ induces a K\"ahler structure $(J_\chi,\om_\chi)$ on $M_\chi$ characterized by
\[
    \pi_\chi^* \omega_\chi = \dd \eta_\chi,
    \qquad
    \widetilde{J_\chi}=J_N,
\]
where $\widetilde{J_\chi}$ is the horizontal lift of $J_\chi$ to
$\CD$ with respect to the connection $\eta_\chi$. 
We denote the associated complex manifold by
\begin{equation*}
    X_\chi := (M_\chi, J_\chi). 
\end{equation*} 
By abuse of notation, we use the same symbol $\omega_\chi$ for the K\"ahler form on $X_\chi$ and for the transverse K\"ahler
form induced by $\chi$ as in \eqref{tran:k}.

\smallskip
Moreover, since $\widehat{\T}$ is abelian and contains $\BS^1_\chi$, its action descends to an action of the quotient torus $\T := \widehat{\T}/\BS^1_\chi$ on $X_\chi$. 
Let $\mathfrak{t}$ be the Lie algebra of $\T$, and choose a moment map $m_\chi :X_\chi \to \mathfrak{t}^\vee$.
We denote its image by $\Pol := m_\chi(X_\chi) \subset \mathfrak{t}^\vee$, the corresponding moment polytope.

\smallskip
Following \cite[Section~6.2]{Apostolov_Calderbank_2021}, we consider
deformations of the reference Sasaki structure $(N,\CD,J_N,\chi)$
obtained by varying the K\"ahler metric within $[\om_\chi]$.
Let 
\[
    \CH(X_\chi,\om_\chi)^\T
    := \left\{ \vph \in \CC^\infty(X_\chi,\BR)^\T \mid  \om_\vph := \om_\chi + \ddc_\chi \vph > 0 \right\},
\]
where $\dc_\chi$ denotes the $J_\chi$-twisted exterior derivative on
$X_\chi$. 
For $\vph \in \CH(X_\chi,\om_\chi)^\T$, set 
\[
    \eta_\vph := \eta_\chi + \pi_\chi^* \dc_\chi\vph,
    \qquad
    \CD_\vph := \ker \eta_\vph.
\]
Then $\dd \eta_\vph = \pi_\chi^* \om_\vph$.
Let $J_\varphi$ be the horizontal lift of $J_\chi$ to $\CD_\vph$ with respect to the connection $\eta_\vph$ on the principal bundle $N \to X_\chi$.
The contact vector field $\chi_\vph := \eta_{\CD_\vph}(\chi^\CD)$ defines a Sasaki structure on $(\CD_\vph, J_\vph)$ with the same Sasaki--Reeb vector field $\chi^\CD$. 

\smallskip
Next, fix $\xi \in \widehat{\mathfrak{t}} \cap \mathfrak{cr}^{\chi}_+(N,\CD,J_N)$ and let $\xi^\CD$ be its Sasaki--Reeb vector field. 
Define $\xi_\vph := \eta_{\CD_\vph}(\xi^{\CD})$, where $\eta_{\CD_\vph}: TN \to TN/\CD_\vph$ is the canonical projection. 
Since $\vph$ is $\T$-invariant, the contact form $\eta_\vph$ is invariant under $\xi^{\CD}$. 
Therefore, $\xi^{\CD}$ preserves $\CD_\vph$ and it is the contact lift of $\xi_\vph$ with respect to $\CD_\vph$. 
By \cite[Lemma~6]{Apostolov_Calderbank_2021}, $\xi_\vph$ defines a Sasaki structure on $(N,\CD_\vph,J_\vph)$. 
Its Sasaki--Reeb vector field is $\xi^{\CD}$, which is independent of $\vph$. 

\smallskip
We denote such a family of Sasaki structures by
\begin{equation*}
    \CS_\chi(\xi^{\CD})
    := \left\{ \text{Sasaki structures }(\CD_\vph,J_\vph,\xi_\vph)
    \text{ on }N\text{ induced by }
    \vph \in \CH(X_\chi,\om_\chi)^\T\right\},
\end{equation*}
which is the set of Sasaki structures on $N$ obtained by the above construction with Sasaki--Reeb vector field $\xi^{\CD}$.
Since $\xi$ commutes with $\chi$, the vector field $\xi^{\CD}$ descends to a Hamiltonian holomorphic vector field on $(X_\chi,\om_\chi)$. 
With respect to the chosen normalization of $m_\chi$, let $\ell_\xi$ be the unique positive affine-linear function on $\Pol$ satisfying 
\[
    \eta_\chi(\xi^{\CD})
    = \pi_\chi^* \left(\ell_\xi\circ m_\chi\right).
\]

\subsubsection{YTD type results}
The following results of \cite[Theorem 1]{Apostolov_Calderbank_2021} and \cite[Proposition~1]{Apostolov_Calderbank_Legendre_2021} relate extremal Sasaki structures on $N$ to weighted extremal K\"ahler metrics on $X_\chi$.

\begin{thm}
Let $\xi \in \widehat{\mathfrak{t}} \cap \mathfrak{cr}^{\chi}_+(N,\CD,J_N)$. 
A Sasaki structure $(\CD_\vph,J_\vph,\xi_\vph) \in \CS_\chi(\xi^{\CD})$ is extremal if and only if the K\"ahler metric $\om_\vph \in [\om_\chi]$ is $(v_\xi,w_\xi)$-extremal, where
\begin{equation}{\label{weight:sasa}}
    v_\xi = \ell_\xi^{-n-1},
    \qquad
    w_\xi = \ell_\xi^{-n-3}.
\end{equation}
\end{thm}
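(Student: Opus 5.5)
The plan is to compute the transverse K\"ahler geometry of $\xi_\vph$ directly in terms of $(X_\chi,\om_\vph)$. Then the extremality condition for $\xi_\vph$ becomes a pointwise identity on $X_\chi$.

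\textbf{Step 1: the contact form and transverse metric of $\xi_\vph$.} Since $\eta_\vph=\eta_\chi+\pi_\chi^*\dc_\chi\vph$ and $\xi^\CD$ is $\T$-invariant and Hamiltonian on $X_\chi$, the normalization $m_\vph=m_\chi+\dc_\chi\vph$ gives
\[
    \eta_\vph(\xi^\CD)=\pi_\chi^*\bigl(\ell_\xi(m_\vph)\bigr).
\]
Hence the contact form attached to $\xi_\vph$ is $\eta_{\xi_\vph}=\eta_\vph/\ell_\xi(m_\vph)$, with $\ell_\xi>0$ on $\Pol$. Differentiating, restricting to $\CD_\vph$, and using $\dd\eta_\vph=\pi_\chi^*\om_\vph$, the transverse K\"ahler form of $\xi_\vph$ is $\om_\vph/\ell_\xi(m_\vph)$ plus a term $\dd\ell_\xi(m_\vph)\wedge\eta_\vph$. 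That extra term vanishes on $\CD_\vph$, so
\[
    \om_{\xi_\vph}|_{\CD_\vph}=\pi_\chi^*\bigl(\om_\vph/\ell_\xi(m_\vph)\bigr).
\]
The transverse holomorphic structure is still $J_\vph$, which is the lift of $J_\chi$. So, locally and transversally to $\xi^\CD$, the transverse K\"ahler structure of $\xi_\vph$ is modelled on the conformally rescaled Hermitian metric $g_\vph/\ell$ on a quotient of $X_\chi$ by the flow of the Killing field $K$ induced by $\xi$. I would make this precise by working $\T$-equivariantly on $N$, as in Apostolov--Calderbank, rather than on an orbifold quotient, since $\xi$ may be irregular.

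\textbf{Step 2: transverse Killing potentials.} A transverse Killing potential for $\xi_\vph$ has the form $\eta_{\xi_\vph}(Y^\CD)$ for $Y\in\widehat{\mathfrak t}$. Since $\eta_\vph(Y^\CD)=\ell_Y(m_\vph)$ with $\ell_Y$ affine, these potentials are exactly $\ell_Y(m_\vph)/\ell_\xi(m_\vph)$ with $\ell_Y$ ranging over affine-linear functions. Thus the Sasaki structure is extremal if and only if
\[
    \Scal(\om_{\xi_\vph})=\frac{\ell(m_\vph)}{\ell_\xi(m_\vph)}\quad\text{for some }\ell\in\mathrm{Aff}(\mathfrak t^\vee).
\]

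\textbf{Step 3: the curvature identity (main obstacle).} The key computation is
\[
    \Scal(\om_{\xi_\vph})=\ell_\xi^{\,n+2}(m_\vph)\,\Scal_{v_\xi}(\om_\vph),\qquad v_\xi=\ell_\xi^{-n-1}.
\]
I would prove it in three parts:
\begin{itemize}
    \item Write the transverse volume form of $\xi_\vph$ as $\ell^{-n}\om_\vph^n$, twisted by the transverse rescaling along $K$. This produces a total factor $\ell^{-n-1}$ in the Ricci potential, so that $\Ric^T(\om_{\xi_\vph})=\Ric(\om_\vph)-\ddc\log\ell^{-n-1}(m_\vph)=\Ric_{v_\xi}(\om_\vph)$ on $\CD_\vph$.
    \item Take the trace with respect to $\om_\vph/\ell$. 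The terms involving $\langle D\log v,m_{\Ric}\rangle$ appear because traces on $\CD_\vph$ pick up the $\xi^\CD$-components through the moment-map normalization of $\Ric_v$.
    \item Verify that these terms assemble into $\ell\,\tr_{\om_\vph,v}\Ric_v(\om_\vph)$.
\end{itemize}
The delicate points are the signs and exponents, and checking the moment-map normalization against Lahdili's convention. The cleanest route is to compute the scalar curvature of the conformal metric $g_\vph/\ell$ with the standard conformal-change formula. After that, one should subtract the O'Neill-type contribution from quotienting by $K$: this uses $|K|^2_{g_\vph}=g_\vph(\xi,\xi)$ and the fact that the $\Dt\ell$ terms give $\tr_{\om_\vph}(\ddc\ell)$, which is exactly where the weighted trace enters. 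I would first check the identity on a toric or $\BP^1$ example to fix exponents.

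\textbf{Step 4: conclusion.} Combining Steps 2 and 3, extremality is equivalent to
\[
    \Scal_{v_\xi}(\om_\vph)=\ell_\xi^{-n-3}(m_\vph)\,\ell(m_\vph)=(w_\xi\,\ell)(m_\vph).
\]
The affine function $\ell$ is then forced to be $\ell^{\ext}_{v_\xi,w_\xi}$ by integrating against affine functions, as in \eqref{lext}, since the Futaki-type integral is independent of $\vph$. This is precisely $(v_\xi,w_\xi)$-extremality.
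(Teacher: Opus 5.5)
Steps 1, 2 and 4 are the standard reduction. The paper gives no proof of its own here: it quotes Apostolov--Calderbank (Theorem~1) and Apostolov--Calderbank--Legendre (Proposition~1). Their real content is exactly your Step~3 identity $\Scal(\om_{\xi_\vph})=\ell^{n+2}\Scal_{v_\xi}(\om_\vph)$, with $\ell=\ell_\xi(m_\vph)$, and that step has a genuine gap.

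\textbf{The geometric model in Step 3 is wrong.} The transverse K\"ahler structure of $\xi_\vph$ is not $g_\vph/\ell$ on $X_\chi$, nor on its quotient by $K$, which is $(2n-1)$-dimensional. The local leaf spaces of $\xi^\CD$ and of $\chi^\CD$ are two different $2n$-dimensional spaces. They share only their quotient by both flows, with different lengths of the residual Killing field and different connection curvatures. The identification $\CD_\vph\cong TX_\chi$ holds only pointwise and does not intertwine brackets of basic fields. So ``conformal change on $X_\chi$ plus an O'Neill term for $K$'' does not compute $\Scal(\om_{\xi_\vph})$.

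\textbf{Your first bullet is false and contradicts your third.} The transverse trace sees only restrictions to $\CD_\vph$; there are no $\xi^\CD$-components to pick up. If $\rho_{\xi_\vph}|_{\CD_\vph}$ were $\pi_\chi^*\Ric_{v_\xi}(\om_\vph)$, you would get $\ell\,\tr_{\om_\vph}\Ric_{v_\xi}(\om_\vph)$. That differs from the required $\ell\,\tr_{\om_\vph,v_\xi}\Ric_{v_\xi}(\om_\vph)$ by $\ell\langle (D\log v_\xi)(m_\vph),m_{\Ric_{v_\xi}(\om_\vph)}\rangle$, which is not zero in general.

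\textbf{How to close the gap.} Take a local closed section $\Phi$ of the canonical bundle of $(N,\CD_\vph,J_\vph)$, invariant under $\xi^\CD$ and $\chi^\CD$; for instance, restrict a suitable holomorphic volume form on $(L_\chi^*)^\times$. Then $\iota_{\xi^\CD}\Phi$ and $\iota_{\chi^\CD}\Phi$ are transverse holomorphic volume forms, and the latter induces $\Psi$ on $X_\chi$. There are no $(n+1,0)$-forms on $\CD_\vph$, so $\Phi=\eta_\vph\wedge\psi$. Hence $\iota_{\xi^\CD}\Phi=\ell\,\iota_{\chi^\CD}\Phi$ on $\CD_\vph$, and the transverse Ricci potential shifts by $-(n+2)\log\ell$, not $-(n+1)\log\ell$.

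Next, let $h$ be $\widehat{\T}$-invariant. The $\xi$-transverse $\dc h$ is the basic form equal to $-\dd h\circ J_\vph$ on $\CD_\vph$ and vanishing on $\xi^\CD$. It differs from $\pi_\chi^*\dc_\chi h$ by $\ell^{-1}\dd h(JK)\,\eta_\vph$. Hence on $\CD_\vph$ its exterior derivative is $\ddc_\chi h+\ell^{-1}\dd h(JK)\,\om_\vph$. Use $\dd\ell(JK)=-|\dd\ell|^2_{\om_\vph}$ and $\CL_{JK}\log(\om_\vph^n/\ii^{n^2}\Psi\wedge\bar\Psi)=-\Dt_{\om_\vph}\ell$, with pull-backs by $\pi_\chi$ understood. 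This gives
\[
  \rho_{\xi_\vph}|_{\CD_\vph}=\Ric(\om_\vph)+(n+2)\ddc_\chi\log\ell+\bigl(\ell^{-1}\Dt_{\om_\vph}\ell-(n+2)\ell^{-2}|\dd\ell|^2_{\om_\vph}\bigr)\om_\vph .
\]
Its trace against $\om_\vph/\ell$ is $\ell\,\Scal(\om_\vph)+2(n+1)\Dt_{\om_\vph}\ell-(n+1)(n+2)\ell^{-1}|\dd\ell|^2_{\om_\vph}$. With the normalisation $m^{\zeta}_{\Ric(\om_\vph)}=-\Dt_{\om_\vph}m^{\zeta}_{\om_\vph}$, this is exactly $\ell^{n+2}\Scal_{v_\xi}(\om_\vph)$. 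So the weighted-trace terms come from these multiples of $\om_\vph$, not from the trace.

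\textbf{A smaller gap in Step 2.} You assume that the extremal CR field lies in $\widehat{\mathfrak t}$. Invariance of $\Scal(\om_{\xi_\vph})$ only shows that this field commutes with $\widehat{\T}$. You need $\widehat{\T}$ maximal, or extremality taken relative to $\widehat{\mathfrak t}$.
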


\begin{rmk}
The formulation in \cite[Theorem~1]{Apostolov_Calderbank_2021} uses weighted extremal Sasaki structures on $N$, rather than weighted extremal
K\"ahler metrics on the regular quotient $X_\chi$. 
The formulation in terms of weighted extremal metrics is established in \cite[Proposition 1]{Apostolov_Calderbank_Legendre_2021}.
\end{rmk}

\begin{rmk}
Let
\(
    D_\xi:=TN/\langle \xi^{\CD}\rangle
\).  The complex structure $J_N$ on $\mathcal D$ therefore induces a
complex structure $J_\xi$ on $D_\xi$.
Following \cite{Apostolov_Calderbank_2021}, we denote by
$\mathcal S(\xi^{\CD},J_\xi)$ the Sasaki polarization consisting of
contact forms compatible with the transverse holomorphic structure
$(\xi^{\CD},J_\xi)$, namely
\[
    \CS(\xi^{\CD},J_\xi)
    =
    \left\{\eta \in \Om^1(N) \mid \eta(\xi^{\CD})=1,\,\CL_{\xi^{\CD}}\eta = 0,\,\dd\eta\text{ is positive and }J_\xi\text{-invariant}
    \right\},
\]
where, by abuse of notation, we identify $d\eta$ with the induced
$2$-form acting on $D_\xi$.

By \cite[Corollary~2.15]{Apostolov_Calderbank_Legendre_2021},
Apostolov--Calderbank--Legendre show that the Sasaki structures in
$\CS_\chi(\xi^{\CD})$ can be  identified with a subset of
$\CS(\xi^{\CD},J_\xi)$, yielding an embedding
\[
    \CS_\chi(\xi^{\CD}) \hookrightarrow \CS(\xi^{\CD},J_\xi).
\]

Combining \cite[Theorem~1]{Apostolov_Calderbank_2021} with \cite[Proposition~3.7]{Apostolov_Calderbank_Legendre_2021}, the existence of an extremal Sasaki structure in $\CS_\chi(\xi^{\CD})$ is equivalent to the existence of an extremal Sasaki structure in $\CS(\xi^{\CD},J_\xi)$.
\end{rmk}

As a consequence of Theorem~\ref{t:A} and Corollary~\ref{c:B}, we obtain the following Yau--Tian--Donaldson correspondence characterizing the existence of an extremal Sasaki structure in $\CS_\chi(\xi^{\CD})$ in terms of relative uniform weighted K-stability of $(X_\chi,J_\chi,[\om_\chi])$.

\begin{cor}{\label{c:sasaki}}
Let $\xi \in \widehat{\mathfrak{t}} \cap \mathfrak{cr}^{\chi}_+(N,\CD,J_N)$. 
There exists an extremal Sasaki structure in $\CS_\chi(\xi^{\CD})$ if and only if $(X_\chi,J_\chi,[\om_\chi])$ is
$\T$-relatively uniformly $(v_\xi,w_\xi)$-weighted K-stable.
\end{cor}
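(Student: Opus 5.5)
The plan is to chain the Apostolov--Calderbank--Legendre correspondence (the theorem just before Corollary~\ref{c:sasaki}) with Corollary~\ref{c:B} applied to the polarized quotient $(X_\chi,[\om_\chi],\T)$ and the weights \eqref{weight:sasa}. By that theorem, an extremal Sasaki structure $(\CD_\vph,J_\vph,\xi_\vph)\in\CS_\chi(\xi^\CD)$ exists if and only if some $\vph\in\CH(X_\chi,\om_\chi)^\T$ makes $\om_\vph$ a $(v_\xi,w_\xi)$-extremal metric in $[\om_\chi]$. The corollary will therefore follow once we know that Corollary~\ref{c:B} applies to $(X_\chi,[\om_\chi],\T,v_\xi,w_\xi)$.

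First I check the hypotheses of Corollary~\ref{c:B}.

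\emph{Polarization.} Since $\chi$ is regular, $N\to X_\chi$ is a principal $\BS^1$-bundle with connection $\eta_\chi$, and $\dd\eta_\chi=\pi_\chi^*\om_\chi$. Hence $[\om_\chi]$ is, up to the normalization constant, the Chern class of the associated line bundle. Its positivity makes it ample, and rescaling $\chi$ if necessary gives $[\om_\chi]=2\pi c_1(L)$.

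\emph{The torus.} $\T=\widehat\T/\BS^1_\chi$ acts holomorphically on $X_\chi$ through Hamiltonian holomorphic fields. I need $\T$ to embed in $\Aut_{\red}(X_\chi)$. This holds because the vector fields have zeros (being Hamiltonian on a compact Kähler manifold), so they lie in the reduced part.

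\emph{The weights.} $\ell_\xi$ is a positive affine function on the compact polytope $\Pol$, because $\xi\in\mathfrak{cr}_+$ forces $\eta_\chi(\xi^\CD)>0$. Consequently $v_\xi=\ell_\xi^{-n-1}$ and $w_\xi=\ell_\xi^{-n-3}$ are smooth and strictly positive on $\Pol$, as required for the relative setting with $w>0$. The moment polytope in the Sasaki normalization must coincide with $\Pol_\af$ under \eqref{norm:moment}. Both are defined by $m_{\om_\vph}=m_\chi+\dc_\chi\vph$, so the weights are well defined on $\Pol_{[\om_\chi]}$.

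The one subtle point I would flag is that $v_\xi$ is $\log$-convex, not $\log$-concave. Indeed $D^2\log v_\xi=(n+1)\,\ell_\xi^{-2}\,D\ell_\xi\otimes D\ell_\xi\ge0$. This is exactly why the earlier YTD results do not apply, and why Theorem~\ref{t:A} is needed. Theorem~\ref{t:A} in turn rests on Theorems~\ref{thm:main1} and \ref{thm:main2}, which only use bounds on $v$, $D\log v$, $D^2\log v$, $\min v$ and $\max|w|$. All of these are finite here since $\ell_\xi$ is bounded below by a positive constant on $\Pol$.

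Finally I concatenate the equivalences. Existence of an extremal Sasaki structure in $\CS_\chi(\xi^\CD)$ is equivalent to existence of a $(v_\xi,w_\xi)$-extremal metric in $[\om_\chi]$, by the preceding theorem. That is equivalent to relative $\T^\BC$-coercivity of $\M^{\rel}_{v_\xi,w_\xi}$, by Theorem~\ref{t:A}. That in turn is equivalent to $\T$-relative uniform $(v_\xi,w_\xi)$-weighted K-stability, by Corollary~\ref{c:B}, whose algebraic half (from the Boucksom--Jonsson and Trusiani works) never used $\log$-concavity.

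I expect the main obstacle to be bookkeeping rather than analysis. One must verify that the moment map normalizations used by Apostolov--Calderbank (through $\ell_\xi\circ m_\chi$) match those defining $\Pol_\af$ and $\ell^{\ext}_{v,w}$. One must also check that the extremal affine function of the Sasaki problem corresponds to $\ell^{\ext}_{v_\xi,w_\xi}$. I would settle this by citing \cite[Proposition~1]{Apostolov_Calderbank_Legendre_2021}, where the weighted extremal formulation is set up with exactly this normalization.
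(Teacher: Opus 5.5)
Your proposal is correct and follows the paper's argument: the paper obtains Corollary~\ref{c:sasaki} by combining the Apostolov--Calderbank(--Legendre) correspondence, which relates extremal Sasaki structures in $\CS_\chi(\xi^{\CD})$ to $(v_\xi,w_\xi)$-extremal metrics in $[\om_\chi]$, with Theorem~\ref{t:A} and Corollary~\ref{c:B}, the key point being that the failure of $\log$-concavity of $v_\xi$ (you correctly note that $v_\xi$ is $\log$-convex) no longer matters. Your extra hypothesis checks are ones the paper leaves implicit, and your phrase ``rescaling $\chi$ if necessary'' is the only loose step; it is harmless, since the paper simply fixes the normalization $2\pi c_1(L_\chi)=[\om_\chi]$.
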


\subsection{Relation to Collins--Sz\'ekelyhidi stability}
Collins--Sz\'ekelyhidi introduced a notion of K-stability for polarized affine cones in \cite{Collins_Szekelyhidi_2019}. 
Its relation to weighted K-stability was studied by Apostolov--Calderbank--Legendre in \cite{Apostolov_Calderbank_Legendre_2021}. 
We briefly recall this correspondence here for completeness.

\subsubsection{Stability of polarized complex cones}
Let $L_\chi$ be the polarization of $(X_\chi,J_\chi)$ satisfying $2\pi c_1(L_\chi) = [\om_\chi]$. 
The associated punctured complex cone is
\[
    \mathscr{C}_\chi^\times 
    := (L_\chi^*)^\times
    \to X_\chi,
\]
equipped with its natural complex structure. 
Here, $(L_\chi^\ast)^\times$ denotes the total space of the dual bundle $L_\chi^\ast$ minus its zero section. 
Note that $\mathscr{C}_\chi^\times$ is diffeomorphic to $N \times \BR_{>0}$.

\smallskip
For any $\vph \in \CH(X_\chi,\om_\chi)^\T$, the CR manifold $(N, \CD_\vph, J_\vph)$ can be identified with the unit circle bundle in $(L_\chi^*,h_\varphi^*)$, where $h_\varphi$ is a Hermitian metric on $L_\chi$ whose curvature is $\om_\vph$. 
In this picture, $\eta_\vph$ is the corresponding connection form and $\chi^{\CD}$ generates the fibrewise $\BS^1$-action.

\smallskip
The Sasaki--Reeb vector field $\xi^{\CD}$ on $N$ extends to a holomorphic vector field on $\mathscr{C}_\chi^\times$. 
Following \cite{Apostolov_Calderbank_Legendre_2021}, we refer to the pair $(\mathscr{C}_\chi^\times,\xi^{\CD})$ as a \textit{polarized complex cone}. 

\smallskip
Let $(\CX,\CL)$ be a smooth $\T$-equivariant polarized test configuration for $(X_\chi,L_\chi)$ in the sense of Definition~\ref{defn:TC}. 
Following \cite[Section~6.2]{Apostolov_Calderbank_Legendre_2021}, the punctured total space $\CC := (\CL^*)^\times$ defines a smooth $\T$-equivariant test configuration of the polarized complex cone $(\mathscr{C}_\chi^\times,\xi^{\CD})$ in the sense of \cite[Definition~6.4]{Apostolov_Calderbank_Legendre_2021}. 
More precisely, $\CC$ admits a flat surjective morphism to $\BP^1$ and its non-central fibres are $\T$-equivariantly isomorphic to $\mathscr{C}_\chi^\times$ as polarized complex cones and the $\BC^*$-action on $(\CX,\CL)$ induces a $\BC^*$-action on $\CC$ that covers the standard action on $\BP^1$. 

\smallskip
Passing to the associated affine cones relates this construction to the affine test configurations considered by Collins--Sz\'ekelyhidi (see \cite[Section~6.2 and Proposition~6.8]{Apostolov_Calderbank_Legendre_2021}). 
In the affine description, after choosing an equivariant embedding into $\BC^N$, a degeneration is induced by a one-parameter subgroup of $\RGL(N)$ commuting with the torus action, and its central fibre is obtained as the corresponding flat limit. 
The formulation used in \cite{Collins_Szekelyhidi_2019} imposes the additional requirement that this central fibre be normal.

\subsubsection{Comparison of the weighted Donaldson--Futaki and global Futaki invariants}
For a test configuration $\CC$ of a polarized complex cone, Apostolov--Calderbank--Legendre associate a differential-geometric invariant $\mathbf{GF}_{\xi^{\CD}}(\CC)$, called the \emph{global Futaki invariant} (\cite[Definition~6.7]{Apostolov_Calderbank_Legendre_2021}).
By \cite[Proposition~6.8]{Apostolov_Calderbank_Legendre_2021}, this invariant agrees, up to a positive dimensional constant, with the Donaldson--Futaki invariant of the corresponding affine cone in the sense of Collins--Sz\'ekelyhidi. 

\smallskip
Similarly to the K\"ahler setting, an extremal Sasaki structure has constant transverse scalar curvature if and only if its \textit{contact Futaki invariant vanishes}. 
The contact Futaki invariant was introduced in \cite[Appendix~B]{Apostolov_Calderbank_Legendre_2021} and plays a role analogous to that of the Futaki invariant in extremal K\"ahler geometry. 
Since the K-stability theory of polarized affine cones developed by Collins--Sz\'ekelyhidi is formulated in the constant scalar curvature Sasaki setting, we henceforth assume that the contact Futaki invariant vanishes in order to place ourselves in the same framework. 

\smallskip
Let $(\CX,\CL)$ be a polarized test configuration for
$(X_\chi,L_\chi)$. 
We denote by $\CC$ the associated test configuration of the
polarized complex cone $(\mathscr{C}_\chi^\times,\xi^{\CD})$, as described above. 
Then, \cite[Proposition~6.9]{Apostolov_Calderbank_Legendre_2021} yields
\[ 
    \DF^{\rel}_{v_\xi,w_\xi}(\CX,\CL)
    = \frac{1}{2\pi}
    \mathbf{GF}_{\xi^{\CD}}(\CC).
\]

\subsection{Application to conformally K\"ahler, Einstein--Maxwell geometry}
Let $X = (M,J)$ be an $n$-dimensional compact complex manifold. 
Fix a maximal real torus $\T \subset \Aut_{\red}(X)$ and a $\T$-invariant K\"ahler metric $\om$, and write $\af = [\om]$. 
A Hermitian metric $\widetilde{g}$ on $X$ is conformally K\"ahler, Einstein--Maxwell (cKEM) if there exists a positive smooth function $f$ such that $g := f^2 \widetilde{g}$ is a K\"ahler metric, 
\begin{equation}\label{killing}
    \xi:=J \grad_g(f) \text{ is Killing with respect to both } g \text{ and } \widetilde{g},
\end{equation}
and
\[
    \Scal(\widetilde{g}) = \mathrm{const}.
\]
This notion was first introduced by LeBrun in \cite{LeBrun_2010}. 
Conformally K\"ahler, Einstein--Maxwell geometry has been extensively studied from various perspectives; see e.g. \cite{LeBrun_2015, Apostolov_Calderbank_Gauduchon_2016, Koca_TonnesenFriedman_2016, LeBrun_2016, Futaki_Ono_2018, Apostolov_Maschler_2019, Futaki_Ono_2017, Lahdili_2019_b, Lahdili_2020, Apostolov_Maschler_TonnesenFriedman_2022} and the references therein.

\smallskip
We now consider the $\T$-invariant setting. 
After conjugation, we may assume that the torus generated by $\xi$ is contained in $\T$, and hence that $\xi \in \mathfrak{t}$. 
We now pay more attention to $\T$-invariant K\"ahler metrics $\homg \in \af$. 
Let $\Pol_\af$ be the (common) moment polytope associated with $\af$, where we consider the usual normalization of the moment maps of K\"ahler metrics in $\af$ with respect to the reference form $\om$.
Since $\xi = J \grad_g(f)$, there exists a constant $a \in \BR$ such that $f$ is the pullback by $m_\om$ of the positive affine-linear function $\ell_f(x) := \langle \xi, x \rangle + a$ on the moment polytope $\Pol_\af$.

\smallskip
As observed in \cite{Lahdili_2020}, $\widetilde{g}$ is cKEM if and only if
$\om$ is a $(v_f,w_f)$-extremal metric for the weights
\[
    v_f = \ell_f^{-2n+1},
    \qquad
    w_f = \ell_f^{-2n-1}.
\]
We then introduce the space of conformally K\"ahler metrics
\begin{equation*}
    \CC_{\af,\xi}^\T :=
    \left\{\ell_f(m_{\homg})^{-2}g_{\homg} \mid \homg \in \af \text{ is a $\T$-invariant K\"ahler metric}
    \right\},
\end{equation*}
consisting of Hermitian metrics conformal to $\T$-invariant K\"ahler metrics in $\af$, with conformal factor $\ell_f(m_\homg)^{-2}$.

\smallskip
Hence, Theorem \ref{t:A} and Corollary \ref{c:B} yield the following Yau--Tian--Donaldson correspondence for cKEM metrics.

\begin{cor}
The following statements are equivalent.
\begin{itemize}
    \item There exists a conformally K\"ahler, Einstein--Maxwell metric in $\CC_{\af,\xi}^\T$.
    \item $(X,\af)$ is $\T$-relatively uniformly $(v_f,w_f)$-weighted K-stable.
\end{itemize}
\end{cor}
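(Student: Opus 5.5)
The statement is the final corollary on conformally K\"ahler, Einstein--Maxwell metrics. The plan is to reduce it to Theorem~\ref{t:A} and Corollary~\ref{c:B}, applied with the weights $(v_f,w_f)=(\ell_f^{-2n+1},\ell_f^{-2n-1})$ on $\Pol_\af$. The argument is essentially a dictionary check; no new estimates are needed.

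\emph{Step 1: the weights are admissible.} Since $\ell_f$ is a positive affine-linear function on the compact polytope $\Pol_\af$, both $v_f$ and $w_f$ are smooth and strictly positive there. So $(v_f,w_f)\in \CC^\infty_{>0}(\Pol_\af)^2$, and the relative objects $\ell^{\ext}_{v_f,w_f}$, $\M^{\rel}_{v_f,w_f}$ and $\DF^{\rel}_{v_f,w_f}$ are all defined. No log-concavity check is needed. This is exactly the point of the paper: $v_f=\ell_f^{-2n+1}$ is log-convex, so the earlier results did not apply.

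\emph{Step 2: the geometric dictionary.} By the observation of \cite{Lahdili_2020} recalled above, a metric $\ell_f(m_\homg)^{-2}g_\homg\in\CC^\T_{\af,\xi}$ is cKEM if and only if $\homg$ is $(v_f,w_f)$-extremal. Here we use that:
\begin{itemize}
\item $\xi=J\grad_{g_\homg}\ell_f(m_\homg)$ is automatically Killing for both metrics, by $\T$-invariance and the normalization \eqref{norm:moment};
\item the moment polytope and $\ell_f$ are independent of $\homg$.
\end{itemize}
Hence existence of a cKEM metric in $\CC^\T_{\af,\xi}$ is equivalent to existence of a $(v_f,w_f)$-extremal metric in $\af$. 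One subtle point is that the Lahdili correspondence gives constant $\Scal(\widetilde g)$ only when the extremal function matches the constant. I would verify, as in \cite{Lahdili_2020}, that the $(v_f,w_f)$-extremal equation with $\ell^{\ext}_{v_f,w_f}$ encodes precisely $\Scal(\widetilde g)=\mathrm{const}$. I expect this to be the only place needing care: one has to match the paper's normalization $\Ric=2\Ric^{\rm usual}$ and the constant in $\ell^{\ext}$.

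\emph{Step 3: apply the YTD correspondence.} Corollary~\ref{c:B}, which rests on Theorem~\ref{t:A} without log-concavity, says that a $(v_f,w_f)$-extremal metric exists in $\af$ if and only if $(X,\af)$ is $\T$-relatively uniformly $(v_f,w_f)$-weighted K-stable. Combining this with Step~2 gives the stated equivalence. If $\af$ is only assumed K\"ahler rather than polarized, the stability statement requires the polarized hypothesis of Corollary~\ref{c:B}, as in the Definition~\ref{d:tc} setting. I would state this implicitly, as the paper does for Corollary~\ref{c:sasaki}.
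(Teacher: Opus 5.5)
\paragraph{The gap: the dictionary in Step~2.} Your route is the paper's: the dictionary of \cite{Lahdili_2020} between $\CC^\T_{\af,\xi}$ and $(v_f,w_f)$-extremal metrics, then Corollary~\ref{c:B}, with no log-concavity needed ($v_f=\ell_f^{-2n+1}$ is log-convex). Steps~1 and~3 are fine, and so is your Killing-field remark in Step~2. The gap is the point you flag and propose to ``verify''. It is not a normalization issue; the claimed equivalence is false in general.

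For $\widetilde g=\ell_f(m_\homg)^{-2}g_\homg$, the conformal change formula gives $\Scal_{v_f}(\homg)=\kappa\, w_f(m_\homg)\Scal(\widetilde g)$ for a universal constant $\kappa>0$; this is what the exponents $-2n\pm1$ are chosen for. So $\homg$ is $(v_f,w_f)$-extremal if and only if $\kappa\Scal(\widetilde g)=\ell^{\ext}_{v_f,w_f}(m_\homg)$, i.e.\ the scalar curvature of $\widetilde g$ is a Killing potential. This is constant only if the affine function $\ell^{\ext}_{v_f,w_f}$ is constant, and that function depends only on $(\af,\T,\ell_f)$.

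The implication from the first bullet to the second does hold. If $\Scal(\widetilde g)\equiv c$, then $\homg$ is $(v_f,\kappa c\,w_f)$-cscK. Uniqueness in \eqref{lext} (the weighted Futaki invariant does not depend on the metric in $\af$) then forces $\ell^{\ext}_{v_f,w_f}\equiv\kappa c$, and Corollary~\ref{c:B} applies. It is the converse that fails.

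\paragraph{A counterexample to the converse.} Take $X=\BP^1\times\BP^1$, $\Pol_\af=[-1,1]^2$, $\T=(\BS^1)^2$, $n=2$, and $\ell_f=1+s\,x_1$ with $0<|s|<1$. Writing \eqref{lext} torically (boundary integrals of $v_f\,\ell$ against interior integrals of $w_f\,\ell^{\ext}_{v_f,w_f}\,\ell$) shows that $\ell^{\ext}_{v_f,w_f}$ has a non-zero $x_1$-coefficient, of order $s$. Yet for $|s|$ small a $(v_f,w_f)$-extremal metric exists. The ansatz $H=\mathrm{diag}(H_1(x_1),1-x_2^2)$ reduces the equation to a linear ODE for $v_fH_1$. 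Its solvability conditions are exactly \eqref{lext}, and its solution is a small perturbation of $1-x_1^2$. So the second bullet holds by Corollary~\ref{c:B}, but $\CC^\T_{\af,\xi}$ contains no cKEM metric.

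\paragraph{The fix.} What is missing is a hypothesis, not a computation. You need to assume that $\ell^{\ext}_{v_f,w_f}$ is constant. This is the Futaki-type obstruction to cKEM metrics (cf.\ \cite{Apostolov_Maschler_2019}), playing the role of the vanishing contact Futaki invariant assumed in the Collins--Sz\'ekelyhidi discussion. With it, your three steps go through verbatim. Without it, the second bullet only yields a metric in $\CC^\T_{\af,\xi}$ whose scalar curvature is a Killing potential. The paper's own justification is the same one-line reduction and states the dictionary ``cKEM iff $(v_f,w_f)$-extremal'' without this caveat, so it has the same gap.
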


We mention that in the toric setting the existence of a conformally K\"ahler, Einstein--Maxwell metric was related to K-polystability in \cite{Apostolov_Maschler_2019}.

\bibliographystyle{smfalpha_new}
\bibliography{biblio}
\end{document}